\newtheorem{definition}{Definition}[section]
\newtheorem{theorem}[definition]{Theorem}%[chapter]
\newtheorem{lemma}[definition]{Lemma}%[chapter]
\newtheorem{proposition}[definition]{Proposition}%[chapter]
\newtheorem{corollary}[definition]{Corollary}%[chapter]
\newtheorem{conjecture}[definition]{Conjecture}%[chapter]
\newtheorem{remark}[definition]{Remark}%[chapter]
\newtheorem{example}[definition]{Example}%[chapter]
\newcolumntype{L}{>{\raggedright\arraybackslash}p{0.45\linewidth}}
\newcolumntype{C}{>{\centering\arraybackslash}p{0.45\linewidth}}
\title{Curvature positivity for Kähler and quasi-Kähler flag manifolds}
\author{Giovane Galindo \ \  Ailton R. Oliveira}
\date{}
\begin{document}
\maketitle
\begin{abstract}
    In this paper, we study the notions of Griffiths and dual-Nakano positivity for the curvature of the Chern connection on Kähler and quasi-Kähler flag manifolds, as well as for the complex projective space. In this setting, we prove that every flag manifold endowed with a complex structure admits a metric of dual-Nakano semi-positive curvature, and we give a full classification of Kähler flag manifolds with Griffiths semi-positive curvature. Next we prove a series of restrictions for a quasi-Kähler flag manifold to have Griffiths semi-positive curvature, and we conjecture that in fact, there are no such metrics for non-integrable almost-complex structures. Lastly, we give a full classification on invariant metrics on the complex projective space with Griffiths and dual-Nakano semi-positive curvature.
\end{abstract}

\section{Introduction}

The curvature positivity of a Hermitian vector bundle $(E,h)$ was first introduced by Nakano in \cite{nakano1955complex}, this notion of positivity is known as Nakano positivity. Later, Griffiths introduced the notion of Griffiths positivity \cite{griffiths1969hermitian}. These notions of curvature positivity on Hermitian vector bundles and related topics have been extensively studied in works such as \cite{67d6e687-a6fb-3db7-9e87-e61bb1083959}, \cite{e936a794-9243-34c8-996f-d7f8f1243d24} \cite{Bloch1971} and \cite{green2021positivity}.\\

Let \(M\) be a complex manifold endowed with a Hermitian metric \(g\).  One may then study various notions of curvature positivity for the holomorphic tangent bundle \((T^{1,0}M,\,g)\).  In the Kähler setting, these notions coincide with positivity of the holomorphic bisectional curvature \cite{siu1980compact,mok1988uniformization}.  However, on a general (non-Kähler) Hermitian manifold, the complex structure is not compatible with the Levi–Civita connection.  It is therefore customary to replace it with the Chern connection, which is compatible with both the metric and the complex structure, though it generally has nonzero torsion.\\

The Griffiths semi-positivity is closely related to Donaldson's conjecture \cite{donaldson2006two}. Let $(M,J,g)$ be a 2n dimensional almost-Kähler compact manifold, with fundamental $2$-form $\omega$, the conjecture states:
\begin{conjecture}
    Let $\sigma$ be a smooth volume form on $(M,J,\omega)$, with $\int_M \sigma = \int_M \omega^n$, if $ \overline{\omega}$ is an almost-Kähler metric on $(M,J)$ with $\overline{\omega}^n = \sigma$ and $[\overline{\omega}] = [\omega]$, that is $\overline{\omega}-\omega$ is a closed 2-form. Then, there are $C^{\infty}$ a priori bounds on $\overline{\omega}$ depending only on $\omega, J$ and $\sigma$.
\end{conjecture}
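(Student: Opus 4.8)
The statement is a family of \(C^\infty\) a priori estimates for an almost-Kähler Calabi--Yau problem, so the natural framework is the continuity method coupled to the maximum principle, adapted to the Chern connection so as to absorb the torsion of the non-integrable \(J\). The plan is to reformulate the conjecture as an a priori estimate for a Monge--Ampère-type equation and then to build the usual estimate ladder (\(C^0 \to C^2 \to C^{2,\alpha} \to C^\infty\)), tracking carefully the torsion terms that the failure of integrability introduces at second order.

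\medskip

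First I would reduce to a scalar equation. Write \(\sigma = e^{F}\,\omega^{n}/n!\) with \(F \in C^\infty(M)\); the normalization \(\int_M \sigma = \int_M \omega^n\) forces \(\int_M e^{F}\,\omega^n = \int_M \omega^n\), and since \([\overline\omega]=[\omega]\) Stokes' theorem makes the volume constraint consistent along any path. In the integrable case one would invoke the \(\partial\bar\partial\)-lemma to put \(\overline\omega = \omega + i\partial\bar\partial\varphi\) and land on \((\omega+i\partial\bar\partial\varphi)^n = e^F\omega^n\), i.e. Yau's equation. Because \(J\) is only almost-complex the \(\partial\bar\partial\)-lemma fails, so I would instead keep the metric in the gauge \(\overline\omega = \omega + da\) and derive, from \(\overline\omega^{\,n} = e^{F}\omega^{n}\), a fully nonlinear elliptic equation for a potential modulo the harmonic/torsion correction, exactly the point at which Donaldson's formulation on \(4\)-manifolds specializes.

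\medskip

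Second, I would run the continuity method on the family \(\overline\omega_t^{\,n} = e^{F_t}\omega^n\), \(t\in[0,1]\), with \(\overline\omega_0=\omega\). Openness is the easy step: the linearization is a second-order elliptic operator of Laplace type for the Chern connection, hence Fredholm on Hölder spaces, and invertibility on the orthogonal complement of constants follows from the maximum principle. Closedness is where the real content lies and reduces precisely to the asserted uniform estimates. I would obtain the \(C^0\) oscillation bound by Moser iteration / an \(L^p\) argument using the volume normalization, then seek the second-order bound on \(\mathrm{tr}_\omega\overline\omega\) by applying the maximum principle to a quantity of the form \(\log\mathrm{tr}_\omega\overline\omega - A\varphi\), and finally promote \(C^2\) to \(C^{2,\alpha}\) by an Evans--Krylov-type argument before bootstrapping to \(C^\infty\) with Schauder estimates.

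\medskip

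The hard part will be the second-order estimate. In Yau's Kähler computation the Bochner-type commutators produce only curvature terms with a favorable sign, but here \(\nabla J \neq 0\) injects torsion terms involving \(\nabla J\) and \(\nabla^2 J\) that are not controlled by the other quantities in the estimate and that can a priori have the wrong sign. In real dimension four the algebra of \(2\)-forms is special enough to close the estimate (this is the regime settled by Donaldson and by Tosatti--Weinkove--Yau), whereas in higher dimensions the known results require a supplementary hypothesis---typically an independent bound on \(\mathrm{tr}_\omega\overline\omega\) or a curvature condition---precisely because these torsion terms are not otherwise dominated. This is exactly why the statement remains a conjecture, and I would not expect to remove that hypothesis unconditionally. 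The most promising route within the present circle of ideas is to exploit a positivity assumption: if \((T^{1,0}M,g)\) is Griffiths (bisectional) semi-positive, then the curvature terms entering the maximum-principle step inherit a definite sign, which is the mechanism linking Donaldson's conjecture to the curvature-positivity questions studied in this paper, and which I would use to try to dominate the residual torsion contributions.
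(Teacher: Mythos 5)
The statement you were asked to prove is Donaldson's conjecture, which the paper reproduces verbatim from \cite{donaldson2006two} purely as motivation: it is labelled a conjecture, the paper offers no proof of it, and no proof is known. So there is no ``paper's own proof'' to compare against; the only relevant question is whether your argument settles the statement, and it does not --- by your own admission. Your closedness step hinges on a second-order estimate for \(\mathrm{tr}_\omega\overline{\omega}\) in which the terms produced by \(\nabla J \neq 0\) and \(\nabla^2 J\) have no favorable sign and are not dominated by anything else in the maximum-principle quantity, and you explicitly say you would not expect to close this step unconditionally. A continuity-method skeleton whose crucial estimate is left open is a program, not a proof, and here the missing estimate \emph{is} the content of the conjecture, so nothing has been reduced to anything genuinely simpler. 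There is also a gap earlier in your reduction: since the \(\partial\bar\partial\)-lemma fails for non-integrable \(J\), the gauge \(\overline{\omega} = \omega + da\) does not produce a scalar equation at all --- the unknown is an essentially infinite-dimensional space of primitives \(a\) modulo closed forms --- so even the ellipticity and openness steps of your scheme are unresolved outside the four-dimensional setting that Donaldson's formulation exploits.

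The one thing in the paper that does touch the conjecture is the quoted theorem of Tosatti \cite{tosatti2008taming}: if the modified tensor \(\mathcal{R}_{i\overline{j}k\overline{l}} = R_{k\overline{l}i}^{j} + 4\, N^r_{\overline{l}\overline{j}}\, \overline{N_{\overline{r}\overline{k}}^i}\) (Chern curvature corrected by a quadratic Nijenhuis term) is Griffiths semi-positive, then \((M,J,\omega)\) satisfies Donaldson's conjecture. Your closing suggestion --- use Griffiths semi-positivity to give the curvature terms in the maximum principle a definite sign --- is exactly this mechanism, so your instinct matches the literature the paper leans on. But mind the logical direction: that result makes positivity of \(\mathcal{R}\) a \emph{sufficient condition} for the a priori bounds on a particular \((M,J,\omega)\); it does not prove the conjecture, which asserts the bounds for every compact almost-Kähler manifold. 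An honest write-up should therefore be framed as: the conjecture is open; here is the standard estimate ladder, here is precisely where the torsion obstructs it, and here is the known conditional result that bypasses the obstruction under a curvature-positivity hypothesis --- which is the bridge to the positivity questions this paper actually studies.
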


And the following result by \cite{tosatti2008taming} relates the conjecture to the positivity of a slight alteration of the curvature tensor.

\begin{theorem}
   Let $(M,J,\omega)$ be an almost-Kähler manifold, define a tensor
\[
\mathcal{R}_{i\overline{j}k\overline{l}}(g,J) = R_{k\overline{l}i}^{j} + 4\, N^r_{\overline{l}\overline{j}}\, \overline{N_{\overline{r}\overline{k}}^i},
\]
where \(R_{k\overline{l}i}^{j}\) is the \((1,1)\) part of the curvature of the Chern connection and \(N\) represents the Nijenhuis tensor. 
If the tensor $\mathcal{R}$ is semi-positive in the Griffiths sense, i.e.,
\[
\mathcal{R}(X,\overline{X}, Y,\overline{Y)}\geq 0 ,
\]
for all \((1,0)\) vectors \(X\) and \(Y\), then $(M,J,\omega)$ satisfies Donaldson's conjecture.
\end{theorem}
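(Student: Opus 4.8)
The strategy is to reduce Donaldson's conjecture to a priori $C^\infty$ estimates for the fully nonlinear equation satisfied by $\overline\omega$, and then to establish those estimates through the standard hierarchy: a zeroth-order estimate, a second-order estimate, and higher-order regularity by bootstrapping. Since $[\overline\omega]=[\omega]$ and $\overline\omega^n=\sigma$, one first encodes $\overline\omega$ by a potential $\varphi$ through the almost-Kähler analogue of the $i\partial\overline\partial$-operation, writing the relevant part of $\overline\omega-\omega$ via the complex Hessian of a function $\varphi$ (the non-integrability of $J$ already entering at this stage, since $d=\partial+\overline\partial$ fails). The volume constraint then becomes a complex Monge–Ampère type equation $\overline\omega^n=\sigma=e^{F}\omega^n$, and by elliptic regularity Donaldson's conjecture holds as soon as one controls $\|\varphi\|_{C^2}$ (equivalently $\|\overline\omega\|_{C^0}$) in terms of $\omega$, $J$ and $\sigma$ alone.

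The $C^0$ estimate does not involve the curvature and can be obtained by a Moser iteration or pluripotential argument adapted to the almost-Kähler setting; I would treat it as the first, curvature-independent step. The heart of the argument — and the only place where the hypothesis on $\mathcal{R}$ is used — is the second-order estimate. Set $u=\mathrm{tr}_\omega\overline\omega$ and apply the Chern Laplacian $\Delta_{\overline\omega}$ of the solution metric to $\log u$. Differentiating the equation twice and commuting covariant derivatives of the Chern connection of $\omega$ produces, besides the usual gradient terms, a contraction of the $(1,1)$-curvature $R^{j}_{k\overline l i}$ together with commutator terms. In the non-integrable case these commutators are not harmless: the failure of $J$ to be integrable contributes exactly the Nijenhuis quadratic expression $4\,N^r_{\overline l\overline j}\,\overline{N^i_{\overline r\overline k}}$, so that the curvature contribution assembles precisely into the tensor $\mathcal{R}_{i\overline j k\overline l}$. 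One is then left with an inequality of the schematic form
\[
\Delta_{\overline\omega}\log u \;\geq\; \frac{1}{u}\sum_{i,k}\mathcal{R}\big(e_i,\overline{e_i},e_k,\overline{e_k}\big)\;-\;C\,\mathrm{tr}_{\overline\omega}\omega,
\]
in a frame $\{e_i\}$ diagonalizing both metrics, with $C$ depending only on the background data. The Griffiths semi-positivity $\mathcal{R}(X,\overline X,Y,\overline Y)\ge 0$ forces the first term on the right to be nonnegative, so it sits on the favorable side and can be retained (or harmlessly dropped). Applying the maximum principle to $\log u - A\varphi$ for a sufficiently large constant $A$ then bounds $u$, and hence $\overline\omega$, from above in terms of the $C^0$ data.

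Once the $C^0$ and $C^2$ estimates are in hand, the concavity of the Monge–Ampère operator in the complex Hessian places us in the setting of the Evans–Krylov theorem, yielding a $C^{2,\alpha}$ bound; Schauder estimates and bootstrapping then upgrade this to the desired $C^\infty$ bounds, establishing Donaldson's conjecture. I expect the main obstacle to be the second-order computation itself: carefully carrying out the commutation of covariant derivatives for the Chern connection in the presence of torsion, keeping track of every Nijenhuis and torsion term, and verifying that they recombine exactly into $\mathcal{R}$ rather than into some other contraction. This is what forces the precise coefficient $4$ and the specific index pattern $N^r_{\overline l\overline j}\,\overline{N^i_{\overline r\overline k}}$ to appear, and it is the only step in which the positivity hypothesis genuinely enters.
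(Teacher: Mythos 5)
Note first that the paper does not prove this theorem: it is quoted from Tosatti--Weinkove--Yau \cite{tosatti2008taming}, so your proposal has to be measured against that argument. Your middle step is essentially the right computation — the place where $\mathcal{R}\geq 0$ genuinely enters is a maximum-principle estimate for $\log \mathrm{tr}_\omega \overline\omega - A\varphi$, in which commuting Chern-connection derivatives produces the $(1,1)$ Chern curvature together with quadratic Nijenhuis terms that assemble exactly into $\mathcal{R}$. But the scaffolding you build around it has a genuine gap: you assume $\overline\omega$ can be encoded by a scalar potential (``writing $\overline\omega-\omega$ via the complex Hessian of a function $\varphi$''). In the non-integrable almost-Kähler setting there is no $\partial\overline\partial$-lemma: $\overline\omega-\omega$ is merely $da$ for a $1$-form $a$, and it cannot be written as any complex-Hessian-type expression in a single function. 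This is precisely the obstruction that makes Donaldson's conjecture hard. The cited proof works directly with the unknown metric and introduces an auxiliary function $\varphi$ that is \emph{not} a potential, defined by solving a linear equation of the form $\tilde\Delta\varphi = 2n - 2\,\mathrm{tr}_{\overline\omega}\omega$ with respect to the Laplacian of the solution metric (suitably normalized); the second-order estimate $\mathrm{tr}_\omega\overline\omega \leq C e^{A(\varphi-\inf_M\varphi)}$ is proved relative to this $\varphi$.

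The absence of a potential also invalidates your other two steps as stated. Your ``curvature-independent first step'' --- a $C^0$ bound by Moser iteration or pluripotential theory --- has no known proof in this setting, exactly because there is no scalar potential to run those arguments on; in \cite{tosatti2008taming} the logical order is the reverse of Yau's: the second-order (trace) estimate is established first, and only then is the $C^0$ bound on $\varphi$ extracted by a Moser-type iteration that takes the exponential trace bound as input, so the zeroth-order estimate depends (indirectly) on the curvature hypothesis. Likewise, Evans--Krylov cannot be invoked, since there is no concave scalar equation for it to act on; the higher-order estimates are instead obtained by a Calabi-type third-order computation on the metric itself, followed by standard bootstrapping. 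In short: your identification of where Griffiths semi-positivity of $\mathcal{R}$ is used is correct, but the potential-based frame around it would fail at both ends, and repairing exactly that failure is the substance of the cited proof.
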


Another important feature of Griffiths-positivity on non-Kähler complex manifold is its relation to one Hermitian curvature flow \cite{streets2011hermitian}. Ustinovskiy defined the positive Hermitian curvature flow in \cite{ustinovskiy2019hermitian}, for Hermitian manifolds, by
\begin{equation}
    \begin{cases}
        &\frac{\partial g}{\partial t} = -S-Q\\
        &g(o)=g_0 ,
    \end{cases}
\end{equation}
where $S$ is the Ricci curvature of the Chern connection and $Q$ is a certain quadratic polynomial in the torsion of the underlying Chern connection. He proved the short-time existence of the flow and showed that, if the initial metric is Griffiths positive (semi-positive), then the solution to the flow is always Griffiths positive (semi-positive). Later on, he applied this theory to homogeneous spaces \cite{ustinovskiy2019hermitian}, \cite{ustinovskiy2021lie}.

In this paper, we want to study the curvature positivity of flag manifolds, they are a special class of homogeneous spaces, in which one can use results from semisimple Lie theory to study geometric properties of the manifold, see \cite{grama2023scalar}, \cite{correa2024twisted}, \cite{grama2022projected}, \cite{grama2021scalar}, \cite{san2003invariant}, \cite{correa2023t}.

Firstly, we use a result from \cite{ustinovskiy2019hermitian} to show that every flag manifold admits a metric of dual-Nakano semi-positive curvature.
\begin{proposition}
    Let $M=G/K$ be a flag manifold equipped with an integrable complex structure $J$, let
\begin{equation*}
    \mathfrak{g} = \mathfrak{k} \oplus \mathfrak{m}
\end{equation*}
be the reductive decomposition, then the metric $-B|_{\mathfrak{m}}$ on $(M,J)$ is dual-Nakano semi-positive.\\
In other words, on every flag manifold with a complex structure, the metric $\lambda$ given by $\lambda_\alpha = 1$, for all $\alpha \in R_M$ is a dual-Nakano semi-positive metric.
\end{proposition}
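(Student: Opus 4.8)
The plan is to reduce the statement to an explicit formula for the Chern curvature of an invariant Hermitian metric on $(M,J)$ and then to read off dual-Nakano semi-positivity as a sum of squares. First I would fix a Weyl basis $\{E_\alpha\}_{\alpha\in R}$ of the complexification, with $[E_\alpha,E_\beta]=N_{\alpha,\beta}E_{\alpha+\beta}$ and $B(E_\alpha,E_{-\alpha})=1$, and use the identification $T^{1,0}_oM=\bigoplus_{\alpha\in R_M^+}\mathfrak{g}_\alpha$ determined by the complex structure $J$. In this basis the metric $-B|_{\mathfrak{m}}$ is diagonal with all coefficients equal, which is exactly the invariant metric $\lambda_\alpha=1$ for every $\alpha\in R_M$; since $B$ is negative definite on the compact real form, $-B$ is a genuine positive-definite Hermitian metric, and $(T^{1,0}M,-B)$ is a homogeneous holomorphic Hermitian vector bundle whose Chern connection is completely determined by the isotropy data.

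Next I would write the $(1,1)$-part of the Chern curvature in root coordinates. For a general invariant metric $\lambda$ the components $R_{\alpha\bar\beta\gamma\bar\delta}$ are quadratic expressions in the structure constants $N$ weighted by ratios of the $\lambda$'s, so the essential simplification for the Killing metric is that all these ratios equal $1$ and the weights collapse. Here I would invoke the Chern-curvature formula of Ustinovskiy \cite{ustinovskiy2019hermitian}, which expresses the curvature of such a Lie-theoretically natural metric in a manifestly structured form; for the normal metric the base--fiber contributions assemble into the operators $[E_\mu,-]$ indexed by the transferred root $\mu=\alpha-\delta$.

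Then I would evaluate the dual-Nakano quadratic form $\sum_{\alpha,\beta,\gamma,\delta} R_{\alpha\bar\beta\gamma\bar\delta}\,u^{\alpha\bar\delta}\,\overline{u^{\beta\bar\gamma}}$ on an arbitrary $u\in T^{1,0}M\otimes\overline{T^{1,0}M}$ and show that, after collecting terms by the transferred root, it becomes a sum $\sum_\mu \bigl\|\,[E_\mu,u_\mu]\,\bigr\|^2\ge 0$ of squared norms. This is precisely the contraction pairing each base index with the opposite fiber index that characterizes dual-Nakano rather than Nakano positivity, and it explains why the cross-contraction, and not the diagonal one, is the one that is non-negative for the Killing metric. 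Only semi-positivity can be expected, since $[E_\mu,E_\nu]$ vanishes whenever $\mu+\nu$ is not a root, producing a nontrivial kernel of the curvature operator.

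The main obstacle is the non-Kähler nature of the normal metric: the choice $\lambda_\alpha=1$ generally violates the additivity $\lambda_{\alpha+\beta}=\lambda_\alpha+\lambda_\beta$ required for $J$-compatibility with the Levi--Civita connection, so the Chern connection genuinely carries torsion and one cannot substitute the Riemannian curvature tensor. The delicate point is therefore the bookkeeping: correctly incorporating the torsion terms into the $(1,1)$-curvature and fixing the sign and index conventions so that the sum of squares lands in the dual-Nakano contraction. Once the curvature is brought into Ustinovskiy's form the remaining verification is essentially combinatorial, but confirming that the torsion contributions do not spoil the sign is where the argument must be carried out with care.
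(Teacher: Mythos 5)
Your strategy --- compute the Chern curvature of the metric $\lambda_\alpha\equiv 1$ in a Weyl basis and exhibit the dual-Nakano contraction as a sum of squares --- is viable in principle, but it is not what the paper does, and as written it has a genuine gap at its only load-bearing step. The paper's proof is soft and essentially two lines: on a compact semisimple $G$ the unique bi-invariant metric (up to scale) is $-B$; by the earlier proposition on submersion metrics, the metric it induces on $G/K$ via the Riemannian submersion $\pi:G\to G/K$ is invariant, and since the horizontal space at the identity is $\mathfrak{m}=\mathfrak{k}^{\perp}$, this submersion metric is exactly $-B|_{\mathfrak{m}}$, i.e.\ $\lambda_\alpha=1$ for all $\alpha\in R_M$; Theorem~\ref{Submersions} (Ustinovskiy) then states that any submersion metric induced from a right-invariant metric on $G$ is dual-Nakano semi-positive when $J$ is integrable. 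No curvature computation is required. Your proposal instead invokes ``the Chern-curvature formula of Ustinovskiy,'' which is quasi-circular: the relevant content of that same source is precisely the submersion theorem quoted above, so either you cite it at theorem level (and your computation becomes unnecessary), or you intend to re-derive it, in which case the derivation \emph{is} the proof and your proposal does not contain it.

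Concretely, the gap is this: the identity you assert, that the dual-Nakano form equals $\sum_\mu\bigl\|[E_\mu,u_\mu]\bigr\|^2$, is not well-defined as stated (you never define $u_\mu$, and the bracket of a root vector with an element of $T^{1,0}M\otimes T^{1,0}M$ does not parse), and no argument is given that the specific cross-contraction $\sum R_{\alpha\bar\beta\gamma\bar\delta}\,u^{\alpha\delta}\overline{u^{\beta\gamma}}$ --- as opposed to the Nakano contraction --- factors into squares. That is exactly the delicate point you flag but never discharge. The structural reason dual-Nakano (and not just Griffiths) semi-positivity holds is a quotient-bundle argument: the evaluation of holomorphic vector fields gives a surjection from the trivial bundle $M\times\mathfrak{g}^{\mathbb{C}}$ onto $T^{1,0}M$, so $\Omega^{1,0}M$ sits as a subbundle of a flat bundle and is therefore Nakano semi-negative, which is the definition of $T^{1,0}M$ being dual-Nakano semi-positive; the ``squares'' are norms of the second fundamental form, i.e.\ of the brackets $[X_{-\delta},X_\alpha]$ projected onto the kernel of the evaluation map, which contains Cartan directions and $\mathfrak{k}^{\mathbb{C}}$-components, not only the root spaces your ``transferred root'' bookkeeping suggests. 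A naive component computation does not visibly factor without exploiting this structure, so to make your route rigorous you must either carry out that quotient-bundle argument in root coordinates, or simply quote Theorem~\ref{Submersions} together with the observation that the submersion metric of $-B$ is $-B|_{\mathfrak{m}}$, as the paper does.
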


By combining the above result with Mok’s classification of Kähler manifolds that are Griffiths semi-positive \cite{mok1988uniformization}, we obtain a complete classification of Griffiths semi-positive Kähler flag manifolds.

\begin{theorem}
    Let $(X,g)$ be a compact, simply connected Kähler manifold with Griffiths semi-positive curvature, if $g$ is invariant by the action of a transitive group then, X is isometrically biholomorphic to
\begin{equation*}
    (M_1 , g_1 ) \times ... \times (M_p , g_p ),
\end{equation*}
where $M_i$ are the Hermitian Symmetric spaces and $g_i$ its unique invariant metric.
\end{theorem}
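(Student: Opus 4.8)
The plan is to reduce the statement to Mok's uniformization theorem and then use the transitivity hypothesis to rigidify the resulting factors. Since $(X,g)$ is Kähler, Griffiths semi-positivity of the Chern curvature coincides with semi-positivity of the holomorphic bisectional curvature, so $(X,g)$ is a compact Kähler manifold of nonnegative holomorphic bisectional curvature. Mok's theorem \cite{mok1988uniformization} then describes the universal cover $(\tilde X,\tilde g)$ as an isometric product
\[
(\mathbb{C}^{k}, g_{0})\times \prod_{i}(\mathbb{P}^{n_i},\theta_i)\times \prod_{j}(M_j, g_j),
\]
where $g_{0}$ is flat, the $\theta_i$ are Kähler metrics of nonnegative bisectional curvature on projective spaces, and the $(M_j,g_j)$ are irreducible compact Hermitian symmetric spaces of rank $\ge 2$ carrying their canonical metric. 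Because $X$ is simply connected we have $(\tilde X,\tilde g)=(X,g)$, and because $X$ is compact the non-compact flat factor must be absent, i.e.\ $k=0$. Thus the only point left to settle is that each projective factor $(\mathbb{P}^{n_i},\theta_i)$ carries the Fubini--Study metric; the symmetric factors already appear with the required metric, which is moreover the unique invariant one since the isotropy representation of an irreducible Hermitian symmetric space is irreducible.

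Next I would transport the transitivity hypothesis to the factors. After replacing the transitive group $G$ by its identity component (still transitive because $X$ is connected and $G/G^{0}$ is finite), $G$ acts by holomorphic isometries of the Riemannian product $X=\prod_k F_k$, the decomposition above being the de Rham decomposition of the Kähler manifold and hence compatible with $J$. Since each factor $F_k$ is a compact, de Rham irreducible, non-flat manifold, the theorem on isometries of Riemannian products identifies $\mathrm{Isom}(X)^{0}$ with $\prod_k \mathrm{Isom}(F_k)^{0}$ (permutations of isometric factors being discrete), so $G$ embeds into this product and its projection $G_k$ to each factor acts transitively and holomorphically on $F_k$. Hence every factor, and in particular every projective factor $(\mathbb{P}^{n_i},\theta_i)$, is a homogeneous Kähler manifold.

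It then remains to prove the rigidity statement: a homogeneous Kähler metric on $\mathbb{P}^{n}$ is, up to scaling and biholomorphism, the Fubini--Study metric. Here I would argue cohomologically rather than through the classification of transitive actions. The transitive group $G_k$ acts holomorphically, hence lies in $\mathrm{PGL}(n+1,\mathbb{C})$; being compact it is conjugate into $\mathrm{PU}(n+1)$, and after this conjugation the Fubini--Study form $\omega_{FS}$ is $G_k$-invariant. Since $H^{1,1}(\mathbb{P}^{n})=\mathbb{R}[\omega_{FS}]$, the invariant Kähler form $\theta_i$ satisfies $[\theta_i]=c\,[\omega_{FS}]$, so $\theta_i-c\,\omega_{FS}=i\partial\bar\partial\varphi$ by the $\partial\bar\partial$-lemma; averaging $\varphi$ over the compact group $G_k$ makes it invariant, hence constant on the homogeneous space $\mathbb{P}^{n}$, forcing $\theta_i=c\,\omega_{FS}$. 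Therefore each projective factor is isometrically biholomorphic to $\mathbb{P}^{n_i}$ with a scaling of the Fubini--Study metric, i.e.\ to a rank-one Hermitian symmetric space with its unique invariant metric, and assembling the factors yields the claimed product decomposition.

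The main obstacle I anticipate is not Mok's theorem itself but the two rigidity steps built on top of it: checking that transitivity genuinely descends to each de Rham/Mok factor (which requires the identification of the isometry group of a product together with the careful handling of identity components and of permutations of mutually isometric factors), and ruling out the exotic homogeneous Kähler metrics on $\mathbb{P}^{n}$ coming from non-standard transitive actions such as $\mathrm{Sp}(m)$ on $\mathbb{P}^{2m-1}$. The cohomological argument above is what makes the second step uniform across all possible transitive groups, so I would take care to present it in a way that does not rely on enumerating those groups.
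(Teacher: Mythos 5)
Your proof is correct, and it diverges from the paper precisely at the rigidity step for the projective factors. The paper's route is the same up through Mok's theorem (Theorem~\ref{Mok} and Corollary~\ref{FlagsMok}): compactness and simple connectedness kill the flat factor, leaving a product of projective spaces and compact Hermitian symmetric spaces. But to show each $(\mathbb{C}P^{N_i},\theta_i)$ is Fubini--Study, the paper enumerates: by Ziller's classification \cite{ziller1982homogeneous}, the only homogeneous realizations of complex projective space are $SU(n)/S(U(n-1)\times U(1))$ and $Sp(n)/Sp(n-1)\times U(1)$, and each of these flag manifolds carries a unique invariant complex structure and a unique invariant Kähler metric, which in both cases is the Fubini--Study metric (for the symplectic model because $Sp(n)\subset SU(2n)$). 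You instead give a classification-free argument: conjugate the compact transitive group into $\mathrm{PU}(n+1)$, then use $h^{1,1}(\mathbb{C}P^{n})=1$, the $\partial\bar\partial$-lemma, and averaging of the potential to force $\theta_i=c\,\omega_{FS}$. This is uniform over all transitive groups, and as a bonus you spell out the descent of transitivity to the individual factors (identity components, isometry groups of Riemannian products, de Rham irreducibility), a step the paper passes over in silence; the paper's enumeration, in exchange, is shorter given its flag-manifold machinery, since the two homogeneous models of $\mathbb{C}P^{n}$ are exactly the spaces it analyzes in detail elsewhere. One caveat common to both arguments: the hypothesis gives invariance of $g$ only, so one must know that the transitive group (or its identity component) acts holomorphically; your assertion of this is correct but deserves its one-line justification --- on each irreducible, non-Ricci-flat Kähler factor the only parallel complex structures compatible with the metric are $\pm J$, so a connected group of isometries cannot move $J$ and hence preserves it.
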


Next, we allow the definitions of curvature positivity to include almost-Hermitian manifolds. And, by using the theory of roots on semisimple Lie algebras, we compute the curvature of the Chern connection on flag manifolds. By requiring the metric to be quasi-Kähler, we use these computations to obtain the following results.

\begin{theorem}
    Let $M=G/K$ be a flag manifold, $J$ a non-integrable almost-complex structure and $\omega$ a quasi-Kähler metric. Suppose $(M,J,\omega)$ has Griffiths semi-positive curvature, then $G$ has a simple root of mark $3$.
\end{theorem}

\begin{corollary}\label{cor1} 
None flag manifold associated with $A_l , B_l , C_l$ and $D_l$ accepts a quasi-Kähler metric on a non-integrable almost-complex structure that is Griffiths semi-positive.    
\end{corollary}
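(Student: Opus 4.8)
The plan is to obtain the statement as the immediate contrapositive of the preceding Theorem. That Theorem asserts that, in the stated setting (a flag manifold $M=G/K$ with a non-integrable almost-complex structure $J$ and a quasi-Kähler metric $\omega$), Griffiths semi-positivity of $(M,J,\omega)$ forces $G$ to have a simple root of mark $3$. Therefore the whole task reduces to a purely Lie-theoretic verification: I would show that none of the simple Lie algebras of type $A_l$, $B_l$, $C_l$, $D_l$ possesses a simple root of mark $3$. Recall that the \emph{mark} of a simple root $\alpha_i$ is the coefficient $c_i$ appearing in the expansion $\theta = \sum_{i=1}^l c_i \alpha_i$ of the highest root $\theta$ in the basis of simple roots $\{\alpha_1,\dots,\alpha_l\}$.

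The key step is then a direct computation of these coefficients for each classical type, using a standard realization of the root system in terms of the coordinate vectors $e_i$. For type $A_l$ one has $\theta = \alpha_1 + \cdots + \alpha_l$, so every mark equals $1$. For type $B_l$ the highest root is $\theta = \alpha_1 + 2\alpha_2 + \cdots + 2\alpha_l$, giving marks $1,2,\dots,2$; for type $C_l$ one has $\theta = 2\alpha_1 + \cdots + 2\alpha_{l-1} + \alpha_l$, with marks $2,\dots,2,1$; and for type $D_l$ the highest root is $\theta = \alpha_1 + 2\alpha_2 + \cdots + 2\alpha_{l-2} + \alpha_{l-1} + \alpha_l$, with marks $1,2,\dots,2,1,1$. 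In every case the largest mark that occurs is $2$, so no simple root has mark $3$. Applying the contrapositive of the Theorem, none of these flag manifolds can carry a Griffiths semi-positive quasi-Kähler metric for a non-integrable almost-complex structure, which is exactly the claim.

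I do not expect a genuine obstacle here, since the argument is essentially a table lookup built on top of the preceding Theorem; the only point requiring care is the bookkeeping of conventions, namely fixing the labelling of simple roots and the normalization of the root system consistently so that the marks are read off correctly. For completeness one may note that mark $3$ does appear in the exceptional types $G_2$, $F_4$, $E_6$, $E_7$, $E_8$, which is precisely why the corollary singles out the four classical families $A_l$, $B_l$, $C_l$, $D_l$.
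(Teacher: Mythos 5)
Your proposal is correct and matches the paper's own argument: the corollary is obtained exactly as the contrapositive of Theorem \ref{Height3}, combined with the standard fact that the marks of the simple roots in types $A_l$, $B_l$, $C_l$, $D_l$ never exceed $2$ (the paper simply asserts this fact, while you spell out the highest-root expansions, which is a harmless and correct addition). No gaps.
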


\begin{theorem}
    Let $M$ be a maximal flag manifold. There are no almost-complex structure $J$ and quasi-Kähler metric $\omega$ such that $(M,J,\omega)$ has Griffiths semi-positive curvature.
\end{theorem}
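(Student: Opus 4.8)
The plan is to bootstrap from the preceding theorem. Since Griffiths semi-positivity together with the quasi-Kähler condition and non-integrability already forces $G$ to possess a simple root of mark $3$, and since the classical types are excluded by \autoref{cor1}, I may reduce (passing to simple factors, which split a maximal flag as a product $\prod_i G_i/T_i$, with curvature positivity and the quasi-Kähler property decomposing accordingly) to the case where $G$ is one of $G_2, F_4, E_6, E_7, E_8$. The decisive extra input in the maximal case is that the isotropy system is the full root system, $R_M = R$, so that \emph{every} triple of roots $\{\alpha,\beta,\gamma\}$ with $\alpha+\beta+\gamma=0$ contributes to both the torsion and the curvature. The strategy is to show that this abundance of triples over-determines the metric and makes Griffiths semi-positivity incompatible with genuine non-integrability.

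First I would recall, from the Chern-curvature computation of the previous section, the combinatorial form of the two constraints on an invariant almost-Hermitian structure $(J,\lambda)$ on $G/T$. Non-integrability of $J$ means the sign assignment defining $J$ admits at least one $(3,0)$ cone, i.e.\ a triple $\{\alpha,\beta,\gamma\}$ of $J$-positive roots with $\alpha+\beta+\gamma=0$; this is exactly where the Nijenhuis tensor, and hence the torsion of the Chern connection, is supported. The quasi-Kähler condition, by contrast, imposes the additive Kähler-type relations $\lambda_{\alpha+\beta}=\lambda_\alpha+\lambda_\beta$ on the \emph{mixed} triples (two $J$-positive roots, one $J$-negative), while imposing no relation on the cones themselves. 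On a maximal flag these mixed triples range over the whole of $R$, so the relations rigidly pin down the coefficients $\lambda_\alpha$, leaving essentially no freedom beyond the rigid Kähler-like data together with the torsion carried by the cones.

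Next I would evaluate the Griffiths form $\mathcal{R}(X,\overline{X},Y,\overline{Y})$ on vectors supported on a single cone $\{\alpha,\beta,\gamma\}$ and on its adjacent roots. The torsion terms concentrated on the cone contribute a definite \emph{negative} block to the Hermitian form built from the curvature, while the $(1,1)$-curvature terms and the contributions of the neighbouring roots contribute positive terms that could, a priori, compensate. The heart of the argument is to show that the quasi-Kähler relations constrain these neighbouring contributions so tightly that no such compensation survives: the relevant $2\times2$ (or $3\times3$) sub-block of the Griffiths form acquires a strictly negative eigenvalue, contradicting semi-positivity. Geometrically, each cone reproduces the obstruction already visible on the nearly-Kähler $SU(3)/T$ living inside the $A_2$-subsystem it generates — where there are no mixed triples and hence no compensating freedom at all — and the point is that on the maximal flag the surrounding roots still cannot undo it.

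The main obstacle I anticipate is precisely this last, non-local step: establishing that the mark-$3$ root — which in the non-maximal setting was what allowed the compensation, per the preceding theorem — can no longer rescue positivity once every root of $R$ is present. Concretely, I expect to have to exhibit, for each of $G_2, F_4, E_6, E_7, E_8$, a cone whose compensating roots are themselves bound by other mixed triples in a way that forces the negative eigenvalue; by the full Weyl symmetry of the maximal flag such an ``unrescued'' cone should always exist. Carrying this out amounts to a finite but delicate verification localized on the rank-two subsystems hosting the cones, and organizing it uniformly across the five exceptional types is where the genuine work lies.
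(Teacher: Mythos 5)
Your proposal has a genuine gap: the decisive step is never carried out. After reducing to the exceptional types you concede that exhibiting, for each of $G_2,F_4,E_6,E_7,E_8$, an ``unrescued'' cone with a negative eigenvalue is ``where the genuine work lies'' --- but that verification \emph{is} the theorem; a plan plus an anticipated obstacle is not a proof, and nothing in your sketch rules out that the mark-$3$ root (which, as you note, is exactly what permits compensation in the non-maximal setting) keeps rescuing positivity. The paper's proof shows the whole reduction through Theorem~\ref{Height3} and Corollary~\ref{cor1}, and any type-by-type or eigenvalue analysis, is unnecessary: it is a short uniform root-combinatorial argument built on \autoref{lemma} alone. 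Non-integrability gives $\alpha,\gamma\in R_M^+$ with $\alpha+\gamma\in R_M^-$; the contrapositive of \autoref{lemma} (semi-positivity plus $\beta+\delta\in R_M$ forces $\beta-\delta\in R$), applied to the pairs $(\alpha,\gamma)$, $(-\alpha-\gamma,\alpha)$, $(-\alpha-\gamma,\gamma)$, produces the roots $\alpha-\gamma$, $2\alpha+\gamma$, $\alpha+2\gamma$, which all lie in $R_M$ precisely because the flag is maximal ($R_M=R$); then a three-case check on the signs $\epsilon_{2\alpha+\gamma}$, $\epsilon_{\alpha+2\gamma}$ applies the lemma once more to a pair of positive roots whose difference would have to be $3\alpha$, $3\gamma$, or $3(\alpha+\gamma)$ --- never a root in a reduced root system --- a contradiction. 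So maximality is used only to keep the generated roots inside $R_M$, not to ``over-determine the metric'' as your sketch suggests.

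There is a second gap: the statement quantifies over \emph{all} almost-complex structures, but your reduction via Theorem~\ref{Height3} covers only non-integrable $J$ (and, after splitting into simple factors, only those factors on which $J$ restricts non-integrably). The paper disposes of the integrable case in one line with the same lemma: then $R_M^+$ is a genuine positive system, so two simple roots $\alpha,\beta$ with $\alpha+\beta\in R$ satisfy $\alpha-\beta\notin R$, and \autoref{lemma} applies directly. This is also where the hypothesis $M\neq SU(2)/U(1)$, present in the paper's version of the statement, is needed: $\mathbb{C}P^1$ with the Fubini--Study metric is a Kähler (hence quasi-Kähler) maximal flag with Griffiths positive curvature, so your product reduction would in any case have to discard rank-one factors, which your argument as written does not do.
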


\begin{proposition}
Let $M$ be a flag manifold associated with $G_2$ and $J$ a non-integrable almost-complex structure. There is no quasi-Kähler metric $\lambda$, such that $(M , J ,\lambda)$ is Griffiths semi-positive.
\end{proposition}

Since none of the classical simple Lie algebras of types \(A_l\), \(B_l\), \(C_l\), or \(D_l\) possess a root of mark~3, follows from the Corollary \ref{cor1} the corresponding flag manifolds cannot support a quasi-Kähler metric with Griffiths semi-positive curvature on a non-integrable almost-complex structure.  Likewise, the exceptional Lie algebra \(G_2\) and all maximal flag manifolds fail to admit such metrics.  These observations lead us to the following conjecture.

\begin{conjecture}
If a flag manifold admits an almost-complex structure $J$ and a quasi-Kähler metric $\lambda$ of Griffiths semi-positive curvature then, it is isometrically biholomorphic to
\begin{equation*}
   (M_1 , g_1 ) \times ... \times (M_p , g_p ),
\end{equation*}
where $M_i$ are the Hermitian Symmetric spaces and $g_i$ its unique invariant metric.
\end{conjecture}

Lastly, the following result from \cite{ustinovskiy2019hermitian} states that any manifold with Griffiths positive curvature is biholomorphic to the complex projective space.

\begin{theorem}
    Let $(M,J,g )$ be a compact Hermitian manifold such that
    \begin{enumerate}
        \item [a)] the Chern curvature is Griffiths semi-positive,
        \item[b)] the Chern curvature is Griffiths positive in one point.
    \end{enumerate}
    Then, $M$ is biholomorphic to the complex projective space $\mathbb{C}P^n$.
\end{theorem}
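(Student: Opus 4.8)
The plan is to deduce the statement from the positivity-preserving property of the positive Hermitian curvature flow (PHCF) recalled above, together with a strong maximum principle and Mori's characterization of projective space. Since the complex structure $J$ is held fixed along the flow, the biholomorphism type of $M$ never changes; it therefore suffices to produce, at some positive time, a Hermitian metric whose Chern curvature is \emph{strictly} Griffiths positive at every point of $M$. I would start the flow with $g(0)=g$; by Ustinovskiy's short-time existence result the equation $\partial_t g = -S - Q$ admits a unique smooth solution on a maximal interval $[0,T)$ with $T>0$.

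By the cited result of Ustinovskiy, Griffiths semi-positivity of the Chern curvature is preserved along the flow, so hypothesis (a) persists for all $t \in [0,T)$. The heart of the argument is to upgrade the one-point positivity of hypothesis (b) to global strict positivity. For this I would invoke a strong maximum principle for the reaction--diffusion system governing the Chern curvature tensor $R$, which schematically takes the form
\begin{equation*}
\partial_t R = \Delta R + \mathcal{F}(R, T),
\end{equation*}
where $\Delta$ is the time-dependent Chern Laplacian and $\mathcal{F}$ collects the quadratic reaction terms in the curvature and torsion $T$; the whole point of Ustinovskiy's choice of $Q$ is that $\mathcal{F}$ maps the Griffiths-nonnegative cone into itself. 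The diffusion term then spreads strict positivity: applying Bony's strong maximum principle (equivalently, Hamilton's tensorial version) to the Griffiths quadratic form $R(X,\overline{X}, Y,\overline{Y})$, the bundle of null directions is invariant under parallel transport and has constant dimension for $t>0$. A parallel null subbundle of positive rank would force a holomorphic de Rham--type splitting of $M$ on which the curvature degenerates, contradicting the strict positivity inherited at the distinguished point of hypothesis (b); hence the null subbundle is trivial and $R$ is strictly Griffiths positive everywhere for all small $t>0$.

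With a metric of strictly Griffiths positive Chern curvature in hand, the holomorphic tangent bundle $T^{1,0}M$ is a Griffiths positive Hermitian bundle, and by a theorem of Griffiths every such bundle is ample. Mori's theorem --- the resolution of the Hartshorne/Frankel conjecture, asserting that a compact complex manifold with ample tangent bundle is biholomorphic to projective space --- then yields $M \cong \mathbb{C}P^n$, completing the proof.

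The main obstacle is the strong maximum principle step. Because the Chern connection carries nonzero torsion, the evolution of $R$ acquires the torsion-dependent terms hidden in $\mathcal{F}$, and one must verify \emph{precisely} that these terms preserve the Griffiths-nonnegative cone and are compatible with the hypotheses of Bony's strong maximum principle on the fibre bundle of Hermitian forms. Establishing the exact algebraic structure of this reaction term --- which is what distinguishes the positivity-preserving flow from the naive Chern--Ricci flow --- together with ruling out a nontrivial parallel null distribution, is the crux of the argument.
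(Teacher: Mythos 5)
The paper does not actually prove this theorem: it is quoted verbatim from Ustinovskiy's work \cite{ustinovskiy2019hermitian} as a known input, so there is no internal proof to compare against. Your proposal is, in outline, a faithful reconstruction of Ustinovskiy's own argument --- run the positive Hermitian curvature flow from $g$, use that the flow preserves Griffiths semi-positivity, upgrade the one-point positivity of hypothesis (b) to strict Griffiths positivity everywhere for $t>0$ via a strong maximum principle, then conclude that $T^{1,0}M$ is Griffiths positive, hence ample, and invoke Mori's theorem. Two caveats. First, the step you correctly flag as the crux is not a routine application of Bony/Hamilton: the Griffiths form $R(X,\overline{X},Y,\overline{Y})$ is a bi-quadratic form whose null locus is a cone of pairs $(X,Y)$, not a subbundle, so the ``parallel null subbundle / de Rham splitting'' mechanism you describe does not apply as stated; handling this degeneracy (via a Brendle--Schoen-type maximum principle adapted to the specific torsion-quadratic reaction term) is essentially the entire content of the cited paper, and your text asserts rather than establishes it. Second, Mori's theorem is stated for projective varieties; to apply it to a compact complex manifold you should note that ampleness of $T^{1,0}M$ makes $-K_M=\det T^{1,0}M$ ample, so $M$ is projective by Kodaira embedding. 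As a blind reconstruction of the cited proof your plan is the right one; as a self-contained proof it has the gap just described.
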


We then classify all invariant Hermitian metrics on  $\mathbb{C}P^{2n-1} =Sp(n)/Sp(n-1)\times U(1)$ that have this property, completing the characterization of positive invariant metrics on the complex projective space. More precisely, we prove the following.

\begin{theorem}
The projective space $\mathbb{C}P^{2n-1} =Sp(n)/Sp(n-1)\times U(1)$ endowed with the integrable almost-complex structure $(+,+)$ and the invariant metric $(1,t)$ is Griffiths and dual-Nakano semi-positive if, and only if, $t\geq 1$. Moreover, $\mathbb{C}P^{2n-1}$ is Griffiths and dual-Nakano positive if, and only if, $t> 1$.
\end{theorem}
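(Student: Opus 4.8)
The plan is to reduce both positivity questions to an explicit one-variable inequality by exploiting the very simple isotropy structure of this presentation. Realize $\mathbb{C}P^{2n-1}$ as the flag manifold $Sp(n)/(Sp(n-1)\times U(1))$ attached to the simple Lie algebra of type $C_n$, obtained by crossing the first node $\alpha_1 = e_1 - e_2$ of the Dynkin diagram. The complementary roots are exactly those involving $e_1$, namely $e_1 \pm e_j$ for $2 \le j \le n$ together with $2e_1$, so $\mathfrak{m}$ splits into two irreducible isotropy summands: $\mathfrak{m}_1$, spanned by the root vectors $X_{e_1 \pm e_j}$ (the roots of $\alpha_1$-height $1$), and $\mathfrak{m}_2 = \mathbb{C}X_{2e_1}$ (the single root of $\alpha_1$-height $2$). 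In this language the integrable structure $(+,+)$ is $J X_\alpha = i X_\alpha$ on positive $\mathfrak{m}$-roots, and the metric $(1,t)$ assigns $\lambda_\alpha = 1$ to every root of $\mathfrak{m}_1$ and $\lambda_{2e_1} = t$ to the fiber direction.

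First I would write down the Chern curvature using the formula derived earlier in the paper, specialized to this root data. The decisive observation is that among the positive $(1,0)$ root vectors of $\mathfrak{m}$ the only bracket that returns a positive $\mathfrak{m}$-root vector is $[X_{e_1-e_j}, X_{e_1+e_j}] = N_j\, X_{2e_1} \in \mathfrak{m}_2$, all other such brackets vanishing because the corresponding root sums are not roots. Hence the non-Kähler (torsion) contribution to the Chern curvature is concentrated in the planes spanned by a pair $(X_{e_1-e_j}, X_{e_1+e_j})$ together with the fiber $X_{2e_1}$, and it enters weighted by the metric ratio $\lambda_{2e_1}/(\lambda_{e_1-e_j}\lambda_{e_1+e_j}) = t$. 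Organizing the components $R(X_\alpha, \overline{X_\alpha}, X_\beta, \overline{X_\beta})$ and the mixed terms, I expect each relevant block to be an affine function of $t$, with the coefficient of the torsion term competing against a fixed positive Killing-form term and the balance occurring precisely at $t = 1$.

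Next, treat Griffiths positivity as the condition that the Hermitian endomorphism $Y \mapsto R(X, \overline{X}, \cdot\, , \overline{\cdot})Y$ be positive (semi-)definite for every $(1,0)$ vector $X$. The main work — and the step I expect to be the chief obstacle — is the reduction of this \emph{all-directions} quantifier to a single extremal inequality. I would use the residual symmetry of the setup, the $Sp(n-1)$-action permuting the indices $j$ and the relevant stabilizer in the Weyl group, to bring an arbitrary $X$ into a normal form, and then exploit that $\mathfrak{m}_2$ is one complex-dimensional to enumerate the finitely many genuinely distinct configurations of $X$ and $Y$ across the blocks $\mathfrak{m}_1, \mathfrak{m}_2$. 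The extremal case should be a mixed pair with $X$ in $\mathfrak{m}_1$ and $Y$ in $\mathfrak{m}_2$ (or an $\mathfrak{m}_1$-pair whose bracket falls into $\mathfrak{m}_2$), where the bisectional curvature is proportional to $t - 1$; this yields semi-positivity $\iff t \ge 1$ and positivity $\iff t > 1$.

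Finally I would run the dual-Nakano analysis in parallel. Dual-Nakano (semi-)positivity is the (semi-)positivity of the associated Hermitian form on $\mathfrak{m}^{1,0}\otimes\overline{\mathfrak{m}^{1,0}}$ obtained by the appropriate contraction of the same curvature tensor, so the computation reuses the components already found; the larger bivector quantifier is again controlled by the symmetry reduction and the one-dimensionality of $\mathfrak{m}_2$, and I would check that its extremal form again reduces to the factor $t - 1$, giving the identical threshold. As a consistency check, at the endpoint $t = 1$ the metric is exactly the normal metric $-B|_{\mathfrak{m}}$, which is already dual-Nakano semi-positive by the first Proposition, so the boundary case is forced; the content of the theorem is the sharp comparison showing that this is the exact threshold and that strict positivity appears for all $t > 1$.
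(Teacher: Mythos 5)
Your setup (root data for $C_n$, two isotropy summands, curvature from the paper's Chern connection formula, the observation that the only positive-root bracket landing in $\mathfrak{m}$ is $[X_{e_1-e_j},X_{e_1+e_j}]\propto X_{2e_1}$) matches the paper, and you correctly handle the easy directions: for $t<1$ the terms $R(X_{e_1\pm e_j},\overline{X_{e_1\pm e_j}},X_{2e_1},\overline{X_{2e_1}})\propto(t-1)$ are negative, and at $t=1$ the metric is $-B|_{\mathfrak{m}}$, which is dual-Nakano semi-positive by the earlier Proposition but not strictly positive. The genuine gap is at the crux: positivity for \emph{all} $t>1$. Your plan is to reduce the quantifiers over $X,Y$ (Griffiths) and over $u\in T^{1,0}M\otimes T^{1,0}M$ (dual-Nakano) to ``finitely many genuinely distinct configurations'' via the residual $Sp(n-1)$-symmetry, with an extremal block ``proportional to $t-1$.'' This reduction is not available for dual-Nakano: the isotropy group can normalize a single vector (it acts transitively on spheres in $\mathfrak{m}_1^{1,0}\cong\mathbb{C}^{2(n-1)}$), but it cannot normalize a general $2$-tensor, and the curvature form does not decompose into blocks indexed by $j$. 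After the legitimate first reduction (mixed components $u^{\alpha,\gamma}$, $\alpha\neq\gamma$, contribute non-negatively when $t\geq1$, so one may assume $u=\sum u^{\alpha,\alpha}X_\alpha\otimes X_\alpha$), what remains is a quadratic form whose matrix couples \emph{every} pair of root directions — its off-diagonal entries are $\tfrac12$ between different $j$-blocks, $1-\tfrac1t$ within a $j$-block, and its last row/column ties all of them to the fiber direction $2\lambda_1$. Positivity of this full $(2n-1)\times(2n-1)$ matrix is a global spectral problem; it is not decided by any single $2\times2$ extremal configuration, and no finite list of normal forms substitutes for it. (A small symptom that the computation was not carried out: the entries are not affine in $t$ — both $t$ and $1/t$ appear.)

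The paper closes exactly this gap with an idea absent from your proposal: having formed the matrix $[R^t]$, it notes that $t\mapsto[R^t]$ is analytic and symmetric, so its eigenvalues vary continuously; at $t=2$ the metric is Fubini--Study, hence all eigenvalues are positive; therefore it suffices to show that $0$ is never an eigenvalue for $t>1$, i.e.\ that $[R^t]$ is invertible, which the paper proves by an explicit linear-independence argument on the columns (subtracting pairs of rows forces $a_{2i-1}=a_{2i}$, then all $a_i$ equal, then all zero using the last row, with $t\neq1$ used exactly once). Dual-Nakano positivity for $t>1$ then implies Griffiths positivity for free, so no separate Griffiths analysis is needed. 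Without this continuity-plus-invertibility argument, or some replacement that controls the spectrum of the fully coupled form (e.g.\ an explicit eigenvalue computation for $[R^t]$), your proposal establishes the necessity of $t\geq1$ but not the sufficiency, which is the actual content of the theorem.
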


\section{Almost-complex geometry}\label{Almost-complex}
In this section, we give a brief overview on almost-complex geometry. Focusing on some special types of metrics and on the notions of curvature positivity. We start by recalling some basic definitions for almost-complex geometry.

\begin{definition}
    Let $M$ be a smooth manifold, an almost-complex structure $J$ on $M$ is a map $J: TM \rightarrow TM$ that satisfies $J^2 = -Id$. If $g$ is a Riemannian metric on $M$ and $J$ is $g$-orthogonal, we call $(M,g,J)$ an almost-Hermitian manifold.
\end{definition}

\begin{remark}
    Note that, if $J$ is an almost-complex structure, so is $-J$, this is called the conjugate of $J$ and we will consider them as the same structure.
\end{remark}

On almost-Hermitian manifolds, we define the fundamental 2-form as follows,
\begin{equation}
    \omega(X,Y) = g(JX,Y).
\end{equation}
We may also refer to $\omega$ as the metric of the almost Hermitian manifold.\\

\begin{definition}
  An almost-complex structure $J: TM \rightarrow TM$ is called integrable (or a complex structure) if, the Nijenhuis tensor 
\begin{equation}
    N(X,Y) = [JX,JY]-J[JX,Y]-J[X,JY]-[X,Y]
\end{equation}
vanishes identically. In that case, we call $(M,g,J)$ a Hermitian manifold. 
\end{definition}

Next, we define the special types of metrics on almost-Hermitian manifolds, which we will be interested in.

\begin{definition}
    Let $(M,\omega,J)$ be an almost-Hermitian manifold. We call it
    \begin{enumerate}
        \item [1)] a Kähler manifold if $J$ is integrable and $d\omega=0$,
        \item[2)] a quasi-Kähler manifold (or $(1,2)$-symplectic) if $d\omega^{1,2}=0$,
        \item [3)] an almost-Kähler manifold if $d\omega=0$.
    \end{enumerate}
In such cases, we call $\omega$ a Kähler (respectively quasi-Kähler and almost-Kähler) metric on $(M,J)$.
\end{definition}

For Kähler manifolds, the almost-complex structure $J$ is compatible with the Levi-Civita connection $D$ (i.e., $DJ=0$), making the Levi-Civita connection suitable for studying the complex properties of the manifold. This does not happen for non-Kähler manifolds. Therefore, it is common to use different connections. \\
Gauduchon \cite{gauduchon1997Hermitian} defined a family of connections $\nabla^t$, $t \in \mathbb{R}$, for any almost-Hermitian manifold $(M,g,J)$, that are compatible with both the metric and the almost-complex structure. Here we will be interested in the Chern connection $\nabla^1$, which we will denote simply by $\nabla$. The Chern connection is given by 
   \begin{eqnarray*}\label{tconnection}
         g(\nabla_X Y , Z) &=& g(D_X Y ,Z)  +\frac{1}{2}(d^c\omega)^+ (X,JY,JZ) 
         - g(X,N(Y,Z)) \\ 
         &+& \frac{1}{2}(d^c \omega)^- (X,Y,Z),
   \end{eqnarray*}
where $d^c \omega (X,Y,Z) =- d\omega (JX,JY,JZ)$ and $d^c \omega^+$ means the $(1,2)+(2,1)$ part of $d^c \omega$ and $d^{c} \omega^-$ means the $(3,0)+(0,3)$ part of $d^c \omega$.\\

The next result \cite{gauduchon2010calabi}, gives an alternative formula for the Chern connection.

\begin{theorem}\label{ChernT}
    Let $(M,g,J)$ be an almost-Hermitian manifold. The Chern connection $\nabla$ satisfy
    \begin{equation}\label{ChernE}
\begin{aligned}
& g(\nabla_X Z, Y) = \frac{1}{2}X g(Z,Y) +\frac{1}{2} JX g(Z,JY)\\
&+\frac{1}{4}g( [X,Z] +[JX,JZ] + J[JX,Z] -J[X,JZ],Y)\\
&-\frac{1}{4} g([X,Y]+ [JX,JY] +J[JX,Y] - J[X,JY],Z).
\end{aligned}    
    \end{equation}
\end{theorem}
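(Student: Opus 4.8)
The plan is to verify directly that the right-hand side of \eqref{ChernE} defines the Chern connection. Recall that $\nabla=\nabla^1$ is the unique affine connection on $(M,g,J)$ that is metric ($\nabla g=0$), almost-complex ($\nabla J=0$), and whose torsion has vanishing $(1,1)$-part; this is Gauduchon's characterization of the Chern connection within his canonical family. Hence it suffices to exhibit these three properties for the connection defined by \eqref{ChernE}. Throughout I abbreviate the right-hand side as $B(X;Z,Y)$ and write $A(X,W)=[X,W]+[JX,JW]+J[JX,W]-J[X,JW]$ for the bracket combination occurring in the two $\tfrac14$-terms.

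First I would check that \eqref{ChernE} really defines a connection. One computes $A(fX,W)=fA(X,W)$ and $A(X,fW)=fA(X,W)+2(Xf)W+2(JXf)JW$ straight from the definition of $A$. The identity $g(JZ,Y)=-g(Z,JY)$ (the $g$-orthogonality of $J$) then shows that $B$ is $C^\infty(M)$-linear in $Y$, so that, $g$ being non-degenerate, the relation $g(\nabla_X Z,Y)=B(X;Z,Y)$ defines a unique vector field $\nabla_X Z$. The first $A$-identity gives $C^\infty$-linearity in $X$, and the second, together again with $g(JZ,Y)=-g(Z,JY)$, yields the Leibniz rule $\nabla_X(fZ)=f\nabla_X Z+(Xf)Z$; the non-tensorial pieces produced by $A(X,fZ)$ cancel exactly against those coming from the first two terms $\tfrac12 Xg(Z,Y)$ and $\tfrac12 JXg(Z,JY)$ under the substitution $Z\mapsto fZ$.

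Next come the two compatibility conditions, which form the easy algebraic core. For $\nabla g=0$ I would add $B(X;Z,Y)$ and $B(X;Y,Z)$: the two $\tfrac14$-terms cancel in pairs under the swap $Y\leftrightarrow Z$, while the remaining contribution $\tfrac12 JX\,[\,g(Z,JY)+g(Y,JZ)\,]$ vanishes by skew-adjointness of $J$, leaving precisely $Xg(Z,Y)$. For $\nabla J=0$ the key is the identity $A(X,JW)=J\,A(X,W)$, immediate from the definition of $A$ and $J^2=-\mathrm{Id}$. With it I would compare $B(X;JZ,Y)$ (which should equal $g(\nabla_X(JZ),Y)$) to $g(J\nabla_X Z,Y)=-B(X;Z,JY)$ term by term, using $g(JZ,JY)=g(Z,Y)$ and $g(JA,Y)=-g(A,JY)$; all four groups of terms then match.

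The main obstacle is the torsion condition, which is exactly what distinguishes the Chern connection from the rest of Gauduchon's family. Forming $g(T(X,Z),Y)=B(X;Z,Y)-B(Z;X,Y)-g([X,Z],Y)$, the antisymmetrization of the $\tfrac14$-terms collapses via $A(X,Z)-A(Z,X)=2[X,Z]+2[JX,JZ]$, which already absorbs part of the bracket. I would then complexify and specialize to $X\in T^{1,0}M$ and $\bar Z\in T^{0,1}M$, showing that the surviving expression forces $\nabla_{\bar Z}X=([\bar Z,X])^{1,0}$ and $\nabla_X\bar Z=([X,\bar Z])^{0,1}$ for $(1,0)$-fields $X$, equivalently $T^{1,1}=0$. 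The delicate part is the type bookkeeping of the remaining bracket terms $[JX,JZ]$ and of the derivative terms $Xg(Z,JY)$ against the fact that $\nabla J=0$ makes $\nabla$ preserve $T^{1,0}M$ and $T^{0,1}M$; it is here that the specific coefficients $\tfrac12$ and $\tfrac14$ in \eqref{ChernE} are forced. Should this type-matching become unwieldy, I would instead expand the Koszul-type expression for $\nabla^1$ recalled just before the theorem, rewriting $D$, $d^c\omega$ and $N$ through the Koszul formula for the Levi-Civita connection, and check that it reduces to \eqref{ChernE}.
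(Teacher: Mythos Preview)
The paper does not give its own proof of this theorem; it is quoted as a known result from \cite{gauduchon2010calabi} and used as a black box in the later curvature computations. So there is nothing to compare against at the level of argument.

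Your proposed strategy---show that the right-hand side of \eqref{ChernE} defines an affine connection and then verify Gauduchon's three characterizing properties ($\nabla g=0$, $\nabla J=0$, vanishing $(1,1)$-part of the torsion)---is the standard and correct route. Your checks of the tensoriality and Leibniz rule via the identities $A(fX,W)=fA(X,W)$ and $A(X,fW)=fA(X,W)+2(Xf)W+2(JXf)JW$ are right, as are the verifications of $\nabla g=0$ and $\nabla J=0$ using $A(X,JW)=JA(X,W)$.

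The only place your write-up is genuinely incomplete is the torsion step, which you flag yourself. A clean way to finish it is to complexify and evaluate $A$ on vectors of pure type: for $X\in T^{1,0}M$ and $W\in T^{0,1}M$ one finds $A(X,W)=4\,[X,W]^{0,1}$, while for $X,W\in T^{1,0}M$ one has $A(X,W)=0$. Feeding this into $B(X;Z,Y)-B(Z;X,Y)-g([X,Z],Y)$ with $X\in T^{1,0}M$, $Z\in T^{0,1}M$ and $Y$ of either type gives $T(X,Z)=0$ directly, which is exactly the statement $T^{1,1}=0$. This avoids the somewhat vague ``type bookkeeping'' you allude to. Your fallback plan---expanding the Koszul-type expression for $\nabla^1$ displayed just before the theorem and simplifying---is also perfectly viable and is essentially how Gauduchon obtains the formula.
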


Now, we recall some of the different notions of curvature positivity. These notions are usually associated with Hermitian vector bundles but, here we will be interested only in the complexified tangent bundle, and we will allow our manifold to be almost-Hermitian.

\begin{definition}
    Let $(M,g,J)$ be an almost-Hermitian manifold. Denote by $R$ the curvature tensor of the Chern connection, we say that $(M,g,J)$ is
    \begin{itemize}
        \item [1)] Griffiths positive if, for any nonzero vectors $u,v \in T^{1,0}M$, $u=\sum u^i \frac{\partial}{\partial z_i}$ and $v=\sum v^i \frac{\partial}{\partial z_i}$, we have
        \begin{equation}
            R(u, \overline{u} , v , \overline{v}) = \sum_{ijkl}R_{i\overline{j}k\overline{l}}\ u^i \overline{u}^j v^k \overline{v}^l >0.
        \end{equation}

\item[2)] Nakano positive if, for any nonzero vector\\ $u =\sum_{ij} u^{ij} \frac{\partial}{\partial z_i} \otimes \frac{\partial}{\partial z_j} \in T^{1,0}M \otimes T^{1,0}M$, we have
\begin{equation}
    \sum_{ijkl} R_{i\overline{j}k\overline{l}} u^{ij} \overline{u}^{kl}>0. 
\end{equation}

\item[3)] Dual-Nakano positive if, for any nonzero vector\\ $u =\sum_{ij} u^{ij} \frac{\partial}{\partial z_i} \otimes \frac{\partial}{\partial z_j} \in T^{1,0}M \otimes T^{1,0}M$, we have
\begin{equation}
    \sum_{ijkl} R_{i\overline{j}k\overline{l}} u^{il} \overline{u}^{kj}>0. 
\end{equation}
    \end{itemize}
The notions of semi-positivity, negativity and semi-negativity follow analogously for all the concepts above.
\end{definition}
Also note that both Nakano and Dual-Nakano positivity (respectively semi-positivity, …) implies Griffiths positivity (respectively semi-positivity, …).\\

\section{Submersion metrics}

Let $M=G/K$ be a homogeneous space. A Riemannian metric $g$ on $M$ is called invariant when $G$ acts on it by isometries, that is, the map
\begin{align*}
    &\tau_a :G/K \rightarrow G/K\\
    & \tau_a (gK) =agK 
\end{align*}
satisfies $g(D\tau_a X ,D\tau_a Y)=g(X,Y)$, $\forall a \in G$ and $X,Y\in \mathfrak{X}(M)$.\\
If $M=G/K$ is reductive and
\begin{equation*}
    \mathfrak{g} = \mathfrak{k} \oplus \mathfrak{m}
\end{equation*}
is its reductive decomposition, then the set of invariant metrics, $\mathcal{M}^{inv} (M)$, is in bijection with the set of inner products on $\mathfrak{m}$ invariant by the isotropy representation $Ad^{G/K}$.\\

 \par The space $\mathcal{M}^{inv} (M)$ might be empty for some homogeneous spaces, but there is another way to define metrics on $M= G/K$. Given a right (or left) invariant metric $g$ on $G$ there is a unique metric $\pi_* g$ such that the projection $\pi : G \rightarrow G/K$ is a Riemannian submersion.\\

 Note that, in general, submersion metrics are not invariant, but it turns out that if the initial metric $g$ on $G$ is bi-invariant, then $\pi_* g$ is invariant.
 \begin{proposition}
     Let $g$ be a bi-invariant metric on $G$. Then the submersion metric $\pi_* g$ is an invariant metric for $M=G/K$.  
 \end{proposition}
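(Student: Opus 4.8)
The plan is to reduce the $G$-invariance of $\pi_*g$ to the statement that each translation $\tau_a$ is an isometry of $(M,\pi_*g)$, and to obtain this from the fact that left translations on $G$ are isometries that lift $\tau_a$. The starting observation is the intertwining relation $\pi \circ L_a = \tau_a \circ \pi$, where $L_a(x)=ax$; this holds because $\pi(ax)=axK=\tau_a(xK)$. Since a bi-invariant metric is in particular left-invariant, every $L_a$ is an isometry of $(G,g)$, and the relation above says that $L_a$ descends through the submersion $\pi$ to $\tau_a$. I will exploit the horizontal-lift description of the submersion metric: writing $\mathcal V=\ker d\pi$ for the vertical distribution and $\mathcal H=\mathcal V^{\perp_g}$ for its $g$-orthogonal complement, one has $(\pi_*g)(X,Y)=g(\tilde X,\tilde Y)$ whenever $\tilde X,\tilde Y\in\mathcal H$ are the horizontal lifts of $X,Y$.

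The computation then proceeds in three short steps. First, differentiating $\pi\circ L_a=\tau_a\circ\pi$ gives $d\pi\circ dL_a=d\tau_a\circ d\pi$, so $dL_a$ carries $\mathcal V=\ker d\pi$ into itself; because $L_a$ is a $g$-isometry it therefore also preserves $\mathcal H=\mathcal V^{\perp_g}$. Second, for $X,Y\in T_{xK}M$ with horizontal lifts $\tilde X,\tilde Y\in\mathcal H_x$, the vectors $dL_a\,\tilde X,\,dL_a\,\tilde Y$ lie in $\mathcal H_{ax}$ and satisfy $d\pi(dL_a\,\tilde X)=d\tau_a\,d\pi\,\tilde X=d\tau_a X$, so they are precisely the horizontal lifts of $d\tau_a X,\,d\tau_a Y$ at $ax$. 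Third, combining these with the fact that $L_a$ preserves $g$ yields
\[
(\pi_*g)(d\tau_a X,d\tau_a Y)=g(dL_a\,\tilde X,dL_a\,\tilde Y)=g(\tilde X,\tilde Y)=(\pi_*g)(X,Y).
\]
Since $a\in G$ and the vectors are arbitrary, each $\tau_a$ is an isometry and $\pi_*g$ is invariant.

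I expect the genuine subtlety to lie not in this isometry computation — which uses only left-invariance — but in the well-definedness of $\pi_*g$ itself, and this is exactly where bi-invariance (rather than mere left-invariance) is needed. The point is that the horizontal inner product must be independent of the representative chosen in a fiber $xK$. Equivalently, the right translations $R_k$ with $k\in K$, which fix every fiber and descend to the identity of $M$, must preserve the horizontal bundle and act as $g$-isometries. The first property follows from the $\mathrm{Ad}(K)$-invariance of $\mathfrak m=\mathfrak k^{\perp}$ built into the reductive decomposition, and the second from right-invariance of $g$; a direct computation identifies the two candidate inner products at $xk$ and at $x$ with $g_e(\mathrm{Ad}(k^{-1})u,\mathrm{Ad}(k^{-1})v)$ and $g_e(u,v)$ for $u,v\in\mathfrak m$, which agree precisely because $g_e|_{\mathfrak m}$ is $\mathrm{Ad}(K)$-invariant. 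Since a bi-invariant metric supplies both the left-invariance used above and the right-$K$-invariance needed here, the proposition follows; in fact, once well-definedness is granted as in the preceding discussion, the remaining content of the statement is exactly the three-line isometry argument.
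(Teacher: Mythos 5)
Your proof is correct, and its core mechanism is the same as the paper's: the left translations $L_a$ are isometries of $(G,g)$ that cover $\tau_a$ via $\pi\circ L_a=\tau_a\circ\pi$, preserve the vertical and hence the horizontal distribution, and therefore carry horizontal lifts to horizontal lifts, which forces each $\tau_a$ to be an isometry of $\pi_*g$. The paper packages this concretely rather than structurally: it identifies $\mathrm{Hor}_x=(DL_x)_e\mathfrak{m}$ with $\mathfrak{m}=\mathfrak{k}^{\perp}$, represents $X,Y\in T_{xK}M$ as derivatives of the curves $xe^{tv}K$, $xe^{tw}K$ with $v,w\in\mathfrak{m}$, and observes that both $\pi_*g(X,Y)$ and $\pi_*g(\tau_aX,\tau_aY)$ equal $g(v,w)$ --- which is exactly your three-step computation written out in terms of explicit lifts, and which likewise uses only left-invariance. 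Where you go beyond the paper is in explicitly proving that $\pi_*g$ is well defined, i.e.\ that the horizontal inner product induced at $xK$ is independent of the representative $x$ versus $xk$; the paper takes this for granted (it is implicit in its earlier assertion that a one-sided invariant metric on $G$ induces a unique submersion metric). Your identification of this as the precise point where bi-invariance, rather than mere left-invariance, enters --- through the $\mathrm{Ad}(K)$-invariance of $\mathfrak{m}=\mathfrak{k}^{\perp}$ and of $g_e|_{\mathfrak{m}}$, equivalently through the fiber-preserving right translations $R_k$ being horizontality-preserving isometries --- is correct and sharpens the logical structure of the statement. Both arguments are valid; yours is the more complete one.
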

\begin{proof}

Let $\mathfrak{g}$ and $\mathfrak{k}$ be the Lie algebras of $G$ and $K$, respectively. Since the metric $g$ is bi-invariant, we take $\mathfrak{m}=\mathfrak{k}^{\perp}$ and we have a reductive decomposition
\begin{equation*}
    \mathfrak{g} = \mathfrak{k}\oplus\mathfrak{m}.
\end{equation*}

The horizontal space of the projection $\pi : G \rightarrow G/K$ at a point $x \in G$ is
\begin{equation*}
    Hor_x = (Ver_x )^\perp = ((DL_x )_e \mathfrak{k})^\perp = (DL_x )_e \mathfrak{m}.
\end{equation*}
Given $X,Y \in T_{xK} M$, there are $v,w \in \mathfrak{m}$ such that
\begin{align*}
    &X =\frac{d}{dt}\mid_{t=0} x e^{tv}K & Y =\frac{d}{dt}\mid_{t=0} x e^{tw}K .
\end{align*}
Then, it is easy to see that both $\pi_* g (X,Y)$ and $\pi_* g (\tau_a X , \tau_a Y)$ are equal to $g(v,w)$.
\end{proof}

Finally, we have the following result due to \cite{ustinovskiy2018hermitian}.
\begin{theorem}\label{Submersions}
Let $M=G/K$ be a complex homogeneous manifold, $J$ an integrable complex structure on $M$ and $\pi_* g$ a submersion metric induced from a right invariant metric $g$ on $G$. Then $(M, \pi_* g ,J )$ is dual-Nakano semi-positive.
\end{theorem}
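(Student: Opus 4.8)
The plan is to realize the holomorphic tangent bundle of $M$ as a holomorphic quotient of a \emph{flat} Hermitian bundle, and then to invoke the principle that quotients of flat bundles are dual-Nakano semi-positive. Recall that a complex homogeneous manifold carries a transitive holomorphic action of a complex Lie group, so I take $G$ complex and $K$ a closed complex subgroup, whence $\pi:G\to G/K$ is a holomorphic principal $K$-bundle. Right-invariant vector fields on $G$ are holomorphic and trivialize $T^{1,0}G$, giving a holomorphic isomorphism $T^{1,0}G\cong\underline{\mathfrak g}:=G\times\mathfrak g^{1,0}$ onto the trivial bundle, under which the vertical subbundle $\ker d\pi$ becomes the constant subbundle $\underline{\mathfrak k}=G\times\mathfrak k^{1,0}$. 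Over $G$ we then have a short exact sequence of holomorphic bundles
\[
0\longrightarrow\underline{\mathfrak k}\longrightarrow\underline{\mathfrak g}\longrightarrow\pi^{*}T^{1,0}M\longrightarrow0,
\]
identifying $\pi^{*}T^{1,0}M$ with the quotient $\underline{\mathfrak g}/\underline{\mathfrak k}$.

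Next I would handle the metric. A right-invariant metric $g$ is a \emph{constant} Hermitian form in the right-invariant frame, so $(\underline{\mathfrak g},g)$ is a flat Hermitian holomorphic bundle: its Chern connection is the trivial one and its curvature vanishes. By construction of the submersion metric the horizontal space is the $g$-orthogonal complement of the fibre and $d\pi$ is an isometry on it; hence the metric that $\pi_{*}g$ induces on $\pi^{*}T^{1,0}M$ is precisely the quotient metric coming from $(\underline{\mathfrak g},g)$. Thus $\pi^{*}T^{1,0}M$ is a holomorphic quotient of a flat Hermitian bundle.

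It then remains to compute the Chern curvature of such a quotient. Writing $\gamma\in A^{1,0}(\operatorname{Hom}(\pi^{*}T^{1,0}M,\underline{\mathfrak k}))$ for the second fundamental form of the subbundle and using the standard comparison formula, the curvature of the quotient equals the (vanishing) restriction of the curvature of $\underline{\mathfrak g}$ plus a correction term of the shape $\gamma\wedge\gamma^{*}$. The heart of the argument is that this correction term is dual-Nakano semi-positive: contracted against a test tensor $u=\sum u^{kl}\,\partial_{k}\otimes\partial_{l}$ in the dual-Nakano pattern it collapses, after the index bookkeeping, to a squared norm $\ge0$ of a single $\underline{\mathfrak k}$-valued expression formed from $u$ and $\gamma$. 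This is the exact dual of the familiar statement that a subbundle of a Nakano semi-negative bundle is Nakano semi-negative. Finally, since $d\pi$ is surjective and $\pi^{*}\Theta_{T^{1,0}M}=\Theta_{\pi^{*}T^{1,0}M}$, every test tensor for the dual-Nakano form at a point of $M$ lifts to $G$, so semi-positivity upstairs descends to $(M,\pi_{*}g,J)$.

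The main obstacle is exactly this last curvature step: pinning down the sign and index conventions so that the second-fundamental-form term lands in the \emph{dual}-Nakano cone rather than the Nakano one. It is precisely the choice \emph{quotient} versus \emph{subbundle} that produces dual-Nakano rather than Nakano semi-positivity, consistent with the classical fact that $T^{1,0}\mathbb{C}P^{n}$ is dual-Nakano but not Nakano positive. Secondary points to verify are the holomorphicity of the right-invariant trivialization (so that the Chern curvature is the correct object) and the identification of $\pi_{*}g$ with the quotient metric. A purely computational alternative would be to substitute the Chern-connection formula of Theorem \ref{ChernT} and expand $R$ directly in terms of Lie brackets on $\mathfrak m$, but the quotient-bundle argument is cleaner and makes the dual-Nakano sign transparent.
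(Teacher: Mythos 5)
First, a point of reference: the paper does not prove Theorem \ref{Submersions} at all --- it is quoted from \cite{ustinovskiy2018hermitian} --- so there is no internal proof to compare against; the mechanism in that source is the one you describe (realize $T^{1,0}M$ with the submersion metric as a holomorphic quotient of a trivial flat Hermitian bundle, then use the curvature formula for quotients). Your key curvature step is correct: for a holomorphic surjection from a flat bundle onto $Q$ with the quotient metric, $\Theta_Q=\beta\wedge\beta^{*}$, and its dual-Nakano contraction against $u=\sum u^{kl}\partial_k\otimes\partial_l$ collapses to the squared norm $\bigl\|\sum_{k,j}\overline{u^{kj}}\,\beta_j^{*}\partial_k\bigr\|^{2}\ge 0$ of a kernel-valued element, which is exactly why the quotient lands in the dual-Nakano cone rather than the Nakano one.

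There is, however, a genuine gap at your very first step: you replace the given group $G$ by a complex Lie group acting holomorphically. The theorem's data --- and, crucially, the way it is applied in this paper (Proposition \ref{lambda=1}, where $G$ is compact semisimple and $g=-B$) --- consists of a right-invariant metric on a \emph{compact real} group such as $SU(n)$ or $Sp(n)$. Such a group is not a complex Lie group, and the metric $g$ does not transfer to the complexification $G^{\mathbb{C}}$, so your argument as written proves a true statement about complex groups but says nothing about the flag-manifold case the paper needs. The repair stays within your framework but requires the real-group form of global generation: since $G$ acts by biholomorphisms, every fundamental vector field $\hat X$, $X\in\mathfrak{g}$, is real-holomorphic, hence $\hat X^{1,0}$ is a holomorphic section of $T^{1,0}M$; extending $X\mapsto\hat X^{1,0}$ complex-linearly gives a holomorphic surjection $M\times\mathfrak{g}^{\mathbb{C}}\to T^{1,0}M$, one equips $\mathfrak{g}^{\mathbb{C}}$ with the Hermitian extension of the inner product defining $g$, and one must then check --- a short computation that uses the $J$-invariance of $\pi_*g$ --- that the resulting quotient metric is precisely the Hermitian metric of $(\pi_*g,J)$; after that your curvature step finishes the proof. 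Separately, a smaller error: in the right-invariant trivialization of $T^{1,0}G$ the vertical subbundle at $g$ is $\mathrm{Ad}(g)\mathfrak{k}^{1,0}$, not the constant subbundle $\underline{\mathfrak{k}}$ (constancy holds in the \emph{left}-invariant trivialization, but there the right-invariant metric ceases to be constant). This one is harmless: your argument only needs the kernel to be a holomorphic subbundle --- which it is, being the kernel of the holomorphic bundle map $d\pi$ --- and the metric on the trivial bundle to be flat.
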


\section{Flag manifolds}

\begin{definition}
A (complex) flag manifold, is a homogeneous space $M=G/K$, where $G$ is a compact semisimple lie group and $K=C(T)$ is the centralizer of a torus $T$. If $T$ is a maximal torus (i.e., $C(T)=T$) we call $M= G/T$ a maximal flag manifold.
\end{definition}

 Let $G$ be a semisimple Lie group and $\mathfrak{g}$ its Lie algebra. We have the root space decomposition

\begin{equation}
    \mathfrak{g}^{\mathbb{C}} = \mathfrak{t}^{\mathbb{C}} \oplus \sum_{\alpha \in R} \mathfrak{g}_\alpha,
\end{equation}
  where $\mathfrak{t}$ is the lie algebra of the maximal torus and $R$ is the set of roots. Fixed a set of simple root $\Pi$ for $R$, one can construct a flag manifold in the following way:

  Given $\Pi_K \subset \Pi$ we define $R_K = Span[\Pi_K] \cap R$ and the Lie algebra
  \begin{equation}
      \mathfrak{k}^{\mathbb{C}} = \mathfrak{t}^{\mathbb{C}} \oplus \sum_{\alpha\in R_K} \mathfrak{g}_\alpha .
  \end{equation}

 Then, setting $R_M = R/R_K$ we define
\begin{equation}
    \mathfrak{m}^\mathbb{C} = \sum_{\alpha\in R_M} \mathfrak{g}_\alpha .
\end{equation}
 Combining this equations, we get
 \begin{equation}
     \mathfrak{g}^\mathbb{C} = \mathfrak{k}^{\mathbb{C}} \oplus \sum_{\alpha \in R_M} \mathfrak{g}_\alpha =    \mathfrak{k}^{\mathbb{C}} \oplus \mathfrak{m}^{\mathbb{C}}.
 \end{equation}

Now, we can use a compact real form to get a reductive decomposition
\begin{equation}
    \mathfrak{g} = \mathfrak{k} \oplus\mathfrak{m}.
\end{equation}
The Lie subgroup $K \subset G$ whose lie algebra is $\mathfrak{k}$ is the centralizer of a torus. Hence, we have defined a flag manifold $G/K$ from a choice of simple roots $\Pi_K \subset \Pi$, whose tangent space at the origin $o=eK$ can be identified with $\mathfrak{m}$. Also, we call $R_M$ the set of roots on the flag manifold.\\ 

\begin{remark}
    Note that a flag manifold is maximal if, and only if, $\Pi_K = \{\emptyset\}$.
\end{remark}

A flag manifold, defined in such a way, is usually represented by a painted Dynkin diagram, by painting the roots $\Pi_K$ of the Dynkin diagram of $G$.

\begin{example}\label{exemplo1}
Let $G=SU(4)$ be the Dynkin diagram of $SU(4)$ is
\begin{equation}
      \begin{dynkinDiagram}[scale=2, mark=*]A{3}
 \dynkinRootMark{o}1
 \dynkinRootMark{o}2
 \dynkinRootMark{o}3
 \end{dynkinDiagram}.
\end{equation}
   The roots of $SU(4)$ are $\{ \alpha_{ij} | i,j=1,2,3,4  \ , \ i\neq j\}$ where\\ $\alpha_{ij} ( Diag(a_1 , a_2 , a_3 , a_4))= a_i - a_j$. A set of positive roots is $\{\alpha_{i,j} | i <j\}$, and the associated set of simple roots is $\Pi =\{\alpha_{12} , \alpha_{23} , \alpha_{34}\}$.\\
   Any choice of $\Pi_k \subset \Pi$ determines a flag manifold, so there are $7$ choices for flag manifolds associated with $SU(4)$, although different choices might lead to the same flag manifold.\\
For example, the choice $\Pi_k = \{\alpha_{12},\alpha_{34}\}$, represented by the painted Dynkin diagram
\begin{equation}
      \begin{dynkinDiagram}[scale=2, mark=*]A{3}
 \dynkinRootMark{o}2
 \end{dynkinDiagram},
\end{equation}
is the Grassmannian $Gr_2 (\mathbb{C}^4) = SU(4)/S( U(2) \times U(2))$.\\
    Furthermore, the choice $\Pi_k = \{\alpha_{12}, \alpha_{23}\}$, represented by the painted Dynkin diagram
\begin{equation}
      \begin{dynkinDiagram}[scale=2, mark=*]A{3}
 \dynkinRootMark{o}3
 \end{dynkinDiagram},
\end{equation}
is the complex projective space $ \mathbb{C}P^{3} = SU(4)/S( U(3) \times U(1))$.
\end{example}

It turns out that this construction of flag manifolds through painted Dynkin diagrams exhausts all flag manifolds \cite{alekseevsky1997flag}.
\begin{theorem}
    There is a one-to-one correspondence between flag manifolds (up to isomorphisms of homogeneous spaces) and painted Dynkin diagrams (up to equivalence).
\end{theorem}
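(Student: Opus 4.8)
The plan is to construct explicit maps in both directions between isomorphism classes of flag manifolds and equivalence classes of painted Dynkin diagrams, and to verify that they are mutually inverse. The map from painted diagrams to flag manifolds is exactly the construction carried out in the text preceding the statement: a Dynkin diagram fixes $\mathfrak{g}^{\mathbb{C}}$ (hence a compact real form $\mathfrak{g}$ and a compact group $G$, whose isogeny type is immaterial since $K$ has maximal rank and contains the center), the painted subset $\Pi_K \subseteq \Pi$ determines $R_K = \mathrm{Span}[\Pi_K]\cap R$, the subalgebra $\mathfrak{k}$, its connected subgroup $K$, and finally $M = G/K$. The substance of the proof therefore lies in the reverse map and in matching the two equivalence relations.

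For the reverse map I would start from a flag manifold $M = G/K$ with $K = C(T)$. First embed $T$ in a maximal torus $S \supseteq T$; since $S$ is abelian, $C(T) \supseteq S$, and because centralizers of tori in a compact connected group are connected, $K$ is the connected subgroup with complexified Lie algebra $\mathfrak{k}^{\mathbb{C}} = \mathfrak{s}^{\mathbb{C}} \oplus \bigoplus_{\alpha \in R_K}\mathfrak{g}_\alpha$, where $R_K := \{\alpha \in R : \alpha|_{\mathfrak{t}} \equiv 0\}$ and $\mathfrak{t} = \mathrm{Lie}(T)$; one checks directly that $R_K$ is a closed, symmetric subsystem of $R$. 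The key step is to choose a positivity on $R$ exhibiting $R_K$ as generated by simple roots. I would pick $w \in \mathfrak{t}$ with $\alpha(w) = 0 \Leftrightarrow \alpha \in R_K$ (a generic element of $\mathfrak{t}$), and declare $\alpha > 0$ when $\alpha(w) > 0$, breaking ties $\alpha(w) = 0$ by a functional regular on $R_K$. Then for $\beta \in R_K$ and any decomposition $\beta = \gamma + \delta$ with $\gamma, \delta \in R^+$, the identity $0 = \beta(w) = \gamma(w) + \delta(w)$ together with $\gamma(w), \delta(w) \geq 0$ forces $\gamma, \delta \in R_K^+$; hence a root simple in $R_K$ is simple in $R$, so $\Pi_K := \{\text{simple roots of } R_K^+\} \subseteq \Pi$ and $R_K = \mathrm{Span}[\Pi_K]\cap R$. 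Painting $\Pi_K$ yields the diagram. This is the heart of the argument: it is precisely the statement that centralizers of tori are Levi-type subalgebras, as opposed to the general maximal-rank subalgebras of Borel--de Siebenthal type.

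It remains to handle the ambiguities so that both constructions descend to the stated equivalence classes. In the reverse map the choices of maximal torus $S$ and of the tie-breaking functional are related by inner automorphisms of $\mathfrak{g}$: conjugation acts transitively on maximal tori, and the Weyl group of $R_K$ permutes the positive systems of $R_K$, so the unordered set $\Pi_K$ is well defined up to the action of $\mathrm{Aut}(\mathfrak{g})$ on the diagram. Using $\mathrm{Aut}(\mathfrak{g}) = \mathrm{Inn}(\mathfrak{g}) \rtimes \mathrm{Out}(\mathfrak{g})$, with $\mathrm{Out}(\mathfrak{g})$ realized by diagram automorphisms, one sees that isomorphic homogeneous spaces $G/K \cong G'/K'$ yield diagrams agreeing up to a painting-preserving diagram automorphism, i.e. equivalent painted diagrams; conversely such an automorphism lifts to an isomorphism of the pairs $(\mathfrak{g},\mathfrak{k})$ and hence of $G/K$. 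Finally I would verify the two maps are mutually inverse: the $K$ built from a painted diagram is the centralizer of the subtorus on which every $\alpha \in \Pi_K$ vanishes, so applying the reverse map returns the same $R_K$; and starting from $G/K$ and rebuilding recovers $M$ up to biholomorphic isometry.

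I expect the main obstacle to be the Levi-type step of the second paragraph, namely that every centralizer of a torus is generated by a sub-diagram of simple roots. The ordering argument is clean once $w$ is fixed, but care is needed to guarantee that $w$ can be chosen \emph{inside} $\mathfrak{t}$ (not merely in $\mathfrak{s}$) with root-vanishing locus exactly $R_K$, and that the tie-breaking does not enlarge $R_K$. A secondary technical point is the bookkeeping for the two equivalence relations: one must make precise that ``isomorphism of homogeneous spaces'' reduces to ``isomorphism of the pair $(\mathfrak{g},\mathfrak{k})$'' — using that $G$ is semisimple and $K$ has maximal rank, so the isotropy data determines $M$ — which is what licenses the comparison with diagram automorphisms.
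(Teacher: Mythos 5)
First, a point of comparison: the paper does not prove this theorem at all — it is quoted from Alekseevsky's survey \cite{alekseevsky1997flag} — so your attempt can only be measured against the standard argument, not against a proof in the text.

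Your forward map and your Levi-type step are correct and standard: choosing a generic $w\in\mathfrak{t}$, building an adapted positive system, and concluding that roots simple in $R_K$ remain simple in $R$ is exactly the right way to show that centralizers of tori are standard Levi subalgebras, i.e.\ $R_K=\mathrm{Span}[\Pi_K]\cap R$ for some $\Pi_K\subseteq\Pi$. The genuine gap is in your third paragraph, and it is not the ``secondary bookkeeping point'' you call it — it is the crux of ``one-to-one.'' Your claim that isomorphic homogeneous spaces yield painted diagrams related by a \emph{painting-preserving diagram automorphism} is false, because inner automorphisms do not act trivially on this data: an inner automorphism carrying one standard Levi onto another induces, after restoring the Cartan subalgebra, a Weyl-group element mapping $R_K$ onto $R_{K'}$, and this element need not preserve $\Pi$; hence the two painted subsets of $\Pi$ need not be related by any diagram symmetry. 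Concretely, in the paper's own Example with $G=SU(4)$, take $\Pi_K=\{\alpha_{12}\}$ and $\Pi_{K'}=\{\alpha_{23}\}$, giving $K=S(U(2)\times U(1)\times U(1))$ and $K'=S(U(1)\times U(2)\times U(1))$. These are conjugate inside $SU(4)$ by a permutation matrix (a $3$-cycle, which has determinant $1$), so $SU(4)/K$ and $SU(4)/K'$ are even $SU(4)$-equivariantly diffeomorphic; yet the only nontrivial automorphism of the $A_3$ diagram swaps $\alpha_{12}\leftrightarrow\alpha_{34}$ and fixes $\alpha_{23}$, so these two painted diagrams are inequivalent in your sense. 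With your notion of equivalence the correspondence therefore fails to be injective on diagram classes.

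The repair is to take ``equivalence'' coarser than diagram automorphism: $\Pi_K\sim\Pi_{K'}$ exactly when the subsystems $R_K$ and $R_{K'}$ are conjugate under the full automorphism group $W\rtimes\mathrm{Out}$ of the root system (equivalently, when $\mathfrak{k}$ and $\mathfrak{k}'$ are $\mathrm{Aut}(\mathfrak{g})$-conjugate). Injectivity then requires deciding when two subsets of $\Pi$ are $W$-conjugate — a Dynkin/Howlett-type analysis (for $A_l$: the associated compositions have the same underlying partition) — which is precisely the work your argument skips by attributing everything to $\mathrm{Out}(\mathfrak{g})$. Alternatively, diagram-automorphism equivalence does give a bijection, but only if ``flag manifold'' is taken with its complex structure, i.e.\ as $G^{\mathbb{C}}/P$ up to equivariant biholomorphism (non-conjugate parabolics such as those of types $\{\alpha_{12}\}$ and $\{\alpha_{23}\}$ above give non-biholomorphic complex manifolds); but that extra structure appears neither in the paper's definition of flag manifold nor anywhere in your proof.
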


Next, we use some theory of complex lie algebras to create a suitable base to $\mathfrak{m}^\mathbb{C}$. Let $\mathfrak{g}^{\mathbb{C}}$ be a complex semisimple lie algebra, $B$ be the killing form, $\mathfrak{t}^{\mathbb{C}}$ the maximal torus and $R$ the set of roots. For every $\alpha \in R$ denote by $H_\alpha \in \mathfrak{t}^\mathbb{C}$ the unique vector that satisfies $B(H_\alpha , Y) = \alpha(Y)$, $\forall Y \in \mathfrak{t}^\mathbb{C}$.

\begin{proposition}\label{Liealgebras}
For every $\alpha \in R$, there is an element $X_\alpha \in \mathfrak{g}_\alpha$ such that
\begin{itemize}
    \item [1)] $X_\alpha$ spans $\mathfrak{g}_\alpha$,
    \item[2)] $B(X_\alpha , X_\beta)=0$ for $\beta\neq-\alpha$ and $B(X_{\alpha},X_{-\alpha})=1$, 
    \item[3)] $[X_\alpha , X_{-\alpha}] = H_\alpha$,
    \item[4)] $[X_\alpha , X_\beta] = m_{\alpha , \beta} X_{\alpha+\beta}$ for some real number $m_{\alpha , \beta}$ called the constant structures (if $\alpha+\beta \notin R$, then $m_{\alpha,\beta}=0$, so the equation makes sense).
\end{itemize}
And the constant structures $m_{\alpha , \beta}$ satisfy 
\begin{itemize}
    \item [a)] $   m_{\alpha,\beta} = - m_{\beta,\alpha} .$
    \item[b)] $ m_{\alpha,\beta}= - m_{-\alpha,-\beta}$
    \item[c)] $m_{\alpha , \beta} = m_{\beta , -\alpha-\beta} = m_{-\alpha-\beta,\alpha}$
\end{itemize}
\end{proposition}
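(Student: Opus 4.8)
The plan is to recover this statement as the standard construction of a Weyl (Chevalley) basis for the complexified algebra $\mathfrak{g}^{\mathbb{C}}$, establishing the four structural properties first and then the three relations among the structure constants. I would begin from two facts of the root-space decomposition that I take as known: each root space $\mathfrak{g}_\alpha$ is one-dimensional, and $[\mathfrak{g}_\alpha,\mathfrak{g}_\beta]\subset\mathfrak{g}_{\alpha+\beta}$ (with the convention $\mathfrak{g}_\gamma=0$ when $\gamma$ is neither a root nor $0$). Property (1) is then just the choice of a nonzero vector in the line $\mathfrak{g}_\alpha$, and property (4) is immediate: $[X_\alpha,X_\beta]$ lies in the line $\mathfrak{g}_{\alpha+\beta}$, hence equals $m_{\alpha,\beta}X_{\alpha+\beta}$ for a scalar, which is forced to vanish when $\alpha+\beta\notin R$.

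For property (2) I would use invariance of the Killing form. For $X\in\mathfrak{g}_\alpha$, $Y\in\mathfrak{g}_\beta$ and $H\in\mathfrak{t}^{\mathbb{C}}$, the identity $B([H,X],Y)=-B(X,[H,Y])$ gives $(\alpha(H)+\beta(H))\,B(X,Y)=0$ for all $H$, forcing $B(\mathfrak{g}_\alpha,\mathfrak{g}_\beta)=0$ unless $\beta=-\alpha$. Since $B$ is nondegenerate, its restriction to $\mathfrak{g}_\alpha\oplus\mathfrak{g}_{-\alpha}$ is a perfect pairing, so after rescaling $X_{-\alpha}$ I may normalize $B(X_\alpha,X_{-\alpha})=1$. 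Property (3) follows by a dual computation: $[X_\alpha,X_{-\alpha}]\in\mathfrak{g}_0=\mathfrak{t}^{\mathbb{C}}$, and for every $H$ one has $B([X_\alpha,X_{-\alpha}],H)=B(X_\alpha,[X_{-\alpha},H])=\alpha(H)\,B(X_\alpha,X_{-\alpha})=\alpha(H)$, which by the defining property of $H_\alpha$ identifies the bracket with $H_\alpha$.

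Relation (a) is simply the antisymmetry of the Lie bracket. Relation (c) I would extract from the Jacobi identity applied to the triple $X_\alpha,X_\beta,X_{-\alpha-\beta}$ (assuming $\alpha+\beta\in R$). Expanding each double bracket via properties (3) and (4) yields
\[
m_{\alpha,\beta}H_{\alpha+\beta}-m_{\beta,-\alpha-\beta}H_\alpha-m_{-\alpha-\beta,\alpha}H_\beta=0,
\]
and since $H_{\alpha+\beta}=H_\alpha+H_\beta$ while $H_\alpha,H_\beta$ are linearly independent, comparing coefficients gives $m_{\alpha,\beta}=m_{\beta,-\alpha-\beta}=m_{-\alpha-\beta,\alpha}$.

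The genuinely delicate point, and the main obstacle, is relation (b) together with the reality of the constants $m_{\alpha,\beta}$: the partial normalization above fixes each pair $(X_\alpha,X_{-\alpha})$ only up to a scalar, and nothing so far forces the resulting scalars to be real or forces $m_{-\alpha,-\beta}=-m_{\alpha,\beta}$. Here I would invoke the existence of a compact real form and its associated conjugation $\sigma$, chosen so that $\sigma(X_\alpha)=-X_{-\alpha}$; this is precisely the content of the Weyl-basis theorem, and pinning it down requires analysing the $\alpha$-strings through $\beta$ via the $\mathfrak{sl}_2$-subalgebras $\langle X_\alpha,X_{-\alpha},H_\alpha\rangle$ to control the product $m_{\alpha,\beta}m_{-\alpha,-\beta}$ and to make a globally consistent choice of signs. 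Granting such $X_\alpha$, applying the conjugate-linear automorphism $\sigma$ to $[X_\alpha,X_\beta]=m_{\alpha,\beta}X_{\alpha+\beta}$ produces $[X_{-\alpha},X_{-\beta}]=-\overline{m_{\alpha,\beta}}\,X_{-\alpha-\beta}$, that is $m_{-\alpha,-\beta}=-\overline{m_{\alpha,\beta}}$; once the basis is known to yield real constants, this is exactly relation (b). For the sign bookkeeping in the Weyl-basis construction I would defer to a standard reference, such as San Martin's text, rather than reproduce it here.
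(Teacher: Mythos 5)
Your proposal is correct, but there is nothing in the paper to compare it against: the paper states this proposition as standard background from semisimple Lie theory (it is the Weyl basis theorem, found e.g.\ in San Martin's or Helgason's texts) and gives no proof at all. Your reconstruction is the standard argument and is sound: properties (1)--(4) follow exactly as you say from one-dimensionality of root spaces, $[\mathfrak{g}_\alpha,\mathfrak{g}_\beta]\subset\mathfrak{g}_{\alpha+\beta}$, invariance and nondegeneracy of the Killing form; relation (a) is antisymmetry; and your Jacobi-identity computation for (c) is right, including the implicit point that $\alpha+\beta\in R$ forces $\alpha,\beta$ nonproportional (the root system is reduced), so $H_\alpha,H_\beta$ are indeed independent and coefficients can be compared. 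You also correctly isolate the one genuinely nontrivial step, relation (b) together with reality of the $m_{\alpha,\beta}$: this cannot follow from the normalizations made so far, and it is exactly the content of the Weyl basis theorem, whose proof via an involution acting as $X_\alpha\mapsto -X_{-\alpha}$ and the $\alpha$-string formula for $m_{\alpha,\beta}\,m_{-\alpha,-\beta}$ you describe accurately and reasonably defer to a reference. That deferral is appropriate here, since the proposition is itself quoted background rather than a result the paper proves.
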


Thus, if we have a flag manifold $M=G/K$ determined by $\Pi_K \subset \Pi$, then 
\begin{equation*}
    \mathfrak{m}^\mathbb{C} = \sum_{\alpha \in R_M} \mathbb{C}X_\alpha.
\end{equation*}

Now, we need to understand the isotropy representation of flag manifold
\begin{equation}
    Ad^{G/K} = Ad^K |_\mathfrak{m}: \mathfrak{m} \rightarrow \mathfrak{m}.
\end{equation}

This representation decomposes $\mathfrak{m}$ into irreducible invariant subspaces
\begin{equation}
    \mathfrak{m} = \mathfrak{m}_1 \oplus ... \oplus \mathfrak{m}_k.
\end{equation}

To understand the subspaces $\mathfrak{m}_i$, we need to look at the complexification of the isotropy representation $Ad^{G/K} : \mathfrak{m}^\mathbb{C} \rightarrow \mathfrak{m}^\mathbb{C}$.

\begin{proposition}
    Given a flag manifold $G/C(T)$ defined by $\Pi_K \subset \Pi$, define an equivalence relation on $R_M$ by setting $\alpha \sim \beta$ if $(\alpha-\beta) (T) \equiv 0$, or equivalently $\alpha- \beta \in \mathbb{Z} \Pi_K$.
    Then, there is a one-to-one correspondence between equivalence classes $[\alpha]$ on $R_M$ and irreducible invariant subspaces of $m^\mathbb{C}$ by the isotropy representation. The correspondence is given by
    \begin{equation}
        [\alpha] \mapsto \bigoplus_{\beta \sim \alpha} g_{\beta}.
    \end{equation}
And the irreducible invariant subspaces of $\mathfrak{m}$ are
\begin{equation}\label{A}
    \mathfrak{m}_i = \bigoplus_{\beta \sim \alpha}  \mathfrak{m}_\beta
\end{equation}
    where $\mathfrak{m}_\beta = \mathfrak{m} \cap (\mathbb{C} X_\beta \oplus \mathbb{C}X_{-\beta})$.
\end{proposition}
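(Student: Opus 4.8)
The plan is to pass from the group-level isotropy representation to the infinitesimal one and analyze it with root-space combinatorics. Since $G$ is compact connected and $K=C(T)$ is the centralizer of a torus, $K$ is connected, so a subspace of $\mathfrak{m}^\mathbb{C}$ is $Ad^{G/K}$-invariant if and only if it is invariant under $ad(\mathfrak{k}^\mathbb{C})$, where $\mathfrak{k}^\mathbb{C}=\mathfrak{t}^\mathbb{C}\oplus\sum_{\gamma\in R_K}\mathfrak{g}_\gamma$ is generated by $\mathfrak{t}^\mathbb{C}$ together with the root vectors $X_\gamma$, $\gamma\in\Pi_K$. First I would record that $\sim$ is genuinely an equivalence relation: $\mathbb{Z}\Pi_K$ is a subgroup of the root lattice, and since $R_K$ is exactly the set of roots vanishing on the Lie algebra $\mathfrak{t}_T$ of $T$, the two stated conditions $(\alpha-\beta)(T)\equiv 0$ and $\alpha-\beta\in\mathbb{Z}\Pi_K$ coincide.

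Next I would prove that each $V_{[\alpha]}:=\bigoplus_{\beta\sim\alpha}\mathfrak{g}_\beta$ is $ad(\mathfrak{k}^\mathbb{C})$-invariant. The torus $\mathfrak{t}^\mathbb{C}$ preserves every root space $\mathfrak{g}_\beta$, and by \autoref{Liealgebras} one has $ad(X_\gamma)\mathfrak{g}_\beta\subseteq\mathfrak{g}_{\beta+\gamma}$ for $\gamma\in R_K$; when $\beta+\gamma\in R$ we get $(\beta+\gamma)-\alpha\in\mathbb{Z}\Pi_K$, and moreover $\beta+\gamma\notin R_K$ (otherwise $\beta=(\beta+\gamma)-\gamma\in\mathrm{Span}[\Pi_K]$, contradicting $\beta\in R_M$), so $\mathfrak{g}_{\beta+\gamma}\subseteq V_{[\alpha]}$. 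Conversely, any invariant subspace $W$ is $\mathfrak{t}^\mathbb{C}$-invariant, hence — the weights $\beta$ being pairwise distinct and each $\mathfrak{g}_\beta$ one-dimensional — a sum $\bigoplus_{\beta\in S}\mathfrak{g}_\beta$; invariance under all $ad(X_{\pm\gamma})$ forces $S$ to be closed under adding and subtracting roots of $R_K$, i.e. $S$ is a union of equivalence classes. Thus the decomposition into the $V_{[\alpha]}$ is the finest $ad(\mathfrak{k}^\mathbb{C})$-invariant one, and it remains to check that each $V_{[\alpha]}$ is irreducible.

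This irreducibility is the core of the argument and where I expect the real work to be. Viewing $V_{[\alpha]}$ as a module over the semisimple Levi factor $\mathfrak{l}=\mathfrak{k}_{ss}^\mathbb{C}$ (root system $R_K$, simple roots $\Pi_K$), it is multiplicity-free, its weights being exactly the distinct roots $\beta\sim\alpha$. For such a module the number of irreducible summands equals the number of highest-weight lines, which by multiplicity-freeness are the $X_\beta$ with $\beta+\gamma\notin R$ for all $\gamma\in\Pi_K$; equivalently, $V_{[\alpha]}$ is irreducible precisely when the weight graph on $[\alpha]$ — with an edge joining $\beta,\beta'$ whenever $\beta-\beta'\in\pm\Pi_K$ — is connected. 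I would establish this by showing that any $\beta\in[\alpha]$ admitting some $\gamma\in\Pi_K$ with $\beta+\gamma\in R$ can be raised inside the class, and that the highest-weight vector is unique, via a root-string and $\mathfrak{sl}_2$ analysis using that all $\beta\in[\alpha]$ share the same orthogonal projection $\mu$ off $\mathrm{Span}_\mathbb{R}\Pi_K$ (so they are roots of bounded length lying in the single affine slice $\mu+\mathrm{Span}_\mathbb{R}\Pi_K$). The delicate point is that two class elements need not be comparable in the $\Pi_K$-dominance order, so connectivity cannot be read off a single monotone chain; this combinatorial fact — that the roots in a fixed coset modulo $\mathbb{Z}\Pi_K$ form the weight system of one irreducible $\mathfrak{l}$-representation — is exactly the standard irreducibility of the isotropy summands, which I would either carry out in full or invoke from the structure theory in \cite{san2003invariant}.

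Finally I would descend to the real form. In a Weyl basis adapted to a compact real structure, $\mathfrak{m}_\beta=\mathbb{R}\langle X_\beta-X_{-\beta},\,i(X_\beta+X_{-\beta})\rangle$ is a real $2$-plane with $\mathfrak{m}_\beta^\mathbb{C}=\mathfrak{g}_\beta\oplus\mathfrak{g}_{-\beta}$ and $\mathfrak{m}_\beta=\mathfrak{m}_{-\beta}$. Complex conjugation sends $\mathfrak{g}_\beta$ to $\mathfrak{g}_{-\beta}$, hence carries $V_{[\alpha]}$ to $V_{[-\alpha]}$, so $\mathfrak{m}_i:=\bigoplus_{\beta\sim\alpha}\mathfrak{m}_\beta$ is a real $Ad^{G/K}$-invariant subspace with complexification $V_{[\alpha]}\oplus V_{[-\alpha]}$. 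Since $\beta\sim\alpha$ fixes a nonzero $\Pi_M$-multidegree, $[-\alpha]\neq[\alpha]$ always, and the two conjugate complex irreducibles are inequivalent; hence $\mathfrak{m}_i$ is a real irreducible of complex type. This yields a bijection between the real irreducible summands of $\mathfrak{m}$ and the unordered pairs $\{[\alpha],[-\alpha]\}$, which is precisely the stated correspondence.
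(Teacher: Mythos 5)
First, a point of comparison: the paper does not prove this proposition at all --- it is stated as known structure theory for flag manifolds (the decomposition of the isotropy module by $T$-roots), implicitly relying on references such as \cite{alekseevsky1997flag} and \cite{san2003invariant}. So your attempt cannot be measured against an internal argument; it must stand on its own, and most of it does. The passage to $ad(\mathfrak{k}^{\mathbb{C}})$ via connectedness of $K=C(T)$, the invariance of each $V_{[\alpha]}$, the use of multiplicity-one of the $\mathfrak{t}^{\mathbb{C}}$-weights to see that every invariant subspace is a sum of root spaces, and the descent to $\mathfrak{m}$ (conjugation swaps $V_{[\alpha]}$ and $V_{[-\alpha]}$, $[\alpha]\neq[-\alpha]$, and the central characters $\pm\alpha|_{\mathrm{Lie}(T)}$ distinguish the two conjugate modules) are all correct.

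The genuine gap is the step you yourself call ``the core'': irreducibility of $V_{[\alpha]}$, equivalently connectivity of the weight graph on an equivalence class. You reduce everything to this statement and then do not prove it, proposing instead to ``carry it out in full or invoke it'' from \cite{san2003invariant}. Since this statement is the entire mathematical content of the proposition (the rest is bookkeeping), invoking it from the literature reduces the proposition to itself; and your sketch (raising operators, $\mathfrak{sl}_2$ root strings) is not yet an argument, for exactly the non-comparability reason you identify. The same unproved fact is also used silently earlier, when you assert that an invariant subspace is ``a union of equivalence classes'': invariance only gives a union of connected components of the weight graph, and component $=$ class is again the missing point. Fortunately the gap can be filled in a few lines. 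Suppose $\beta,\beta'\in[\alpha]$ with $\beta\neq\beta'$, and write $\delta=\beta-\beta'=\sum_{\gamma\in\Pi_K}c_\gamma\gamma$ with $c_\gamma\in\mathbb{Z}$. From $0<(\delta,\delta)=(\beta,\delta)-(\beta',\delta)$, either $(\beta,\delta)>0$ or $(\beta',-\delta)>0$; say the former. Then some $\gamma\in\Pi_K$ has $c_\gamma(\beta,\gamma)>0$. If $c_\gamma>0$ and $(\beta,\gamma)>0$, then $\beta-\gamma\in R$ (root-string property, using $\beta\neq\pm\gamma$ because $\beta\in R_M$), it lies in $R_M$ and in $[\alpha]$, and replacing $\beta$ by $\beta-\gamma$ decreases $\sum_{\gamma}|c_\gamma|$ by one; if instead $c_\gamma<0$ and $(\beta,\gamma)<0$, use $\beta+\gamma$. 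The case $(\beta',-\delta)>0$ is symmetric, moving $\beta'$. Each replacement is an edge of your weight graph, so induction on $\sum_{\gamma}|c_\gamma|$ connects $\beta$ to $\beta'$. With this inserted, your proof is complete and self-contained --- indeed more than the paper itself provides.
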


\begin{remark}
    We may write the isotropy representation of a flag manifold as
    \begin{equation}
        \mathfrak{m} = \Big( \mathfrak{m}_{\alpha_1} \oplus ... \oplus \mathfrak{m}_{\alpha_k}\Big) \oplus \Big(\mathfrak{m}_{\beta_1} \oplus ... \oplus \mathfrak{m}_{\beta_l}\Big) \oplus ... \oplus \Big(\mathfrak{m}_{\gamma_1} \oplus ... \oplus \mathfrak{m}_{\gamma_m}\Big)
    \end{equation}
    to indicate that each of the spaces\\ $ \Big( \mathfrak{m}_{\alpha_1} \oplus ... \oplus \mathfrak{m}_{\alpha_l}\Big) , \Big(\mathfrak{m}_{\beta_1} \oplus ... \oplus \mathfrak{m}_{\beta_l} \Big) ,..., \Big(\mathfrak{m}_{\gamma_1} \oplus ... \oplus \mathfrak{m}_{\gamma_m}\Big)$  is an irreducible invariant subspace.
\end{remark}

\subsection{Invariant metrics and almost-complex structures}

A $G$-invariant metric on a flag manifold $M=G/K$ is in one-to-one correspondence with $Ad^{G/K}$-invariant inner products on $\mathfrak{m}$. If
\begin{equation}
      \mathfrak{m} = \mathfrak{m}_1 \oplus ... \oplus \mathfrak{m}_k
\end{equation}
is the isotropy decomposition, then, any such inner product $\lambda$ is of the form
\begin{equation}
    \lambda = -\lambda_1 B|_{\mathfrak{m_1}} -... -\lambda_k B|_{\mathfrak{m_k}},
\end{equation}
where $B$ is the Cartan-Killing form of $G$. Therefore, any invariant metric on $G/K$ can be identified in such way with the set of ordered positive real numbers $(\lambda_1 ,..., \lambda_k )$.\\

\begin{remark}
Alternatively, using Equation \ref{A}, we can identify an invariant metric $\lambda$ with a set of positive real numbers $\{\lambda_\alpha\}_{\alpha \in R_M }$ provided $\lambda_{\alpha} = \lambda_{-\alpha}$ and $\lambda_\alpha = \lambda_ \beta$ if $\alpha \sim \beta$. We do so by seting
\begin{equation}
    \lambda_\alpha = \lambda(X_\alpha ,\overline{X_\alpha}).
\end{equation}
\end{remark}

Now, given an almost-complex structure $J$ on $M=G/K$,  we call it $G$-invariant if $J$ commutes with the action of $G$. Invariant almost-complex structures are in one-to-one correspondence with maps $J : \mathfrak{m} \rightarrow \mathfrak{m}$ such that\\ $J^{2}=-Id$ and $J$ commutes with the isotropy representation.\\

We can extend  $J$ to a map $J: \mathfrak{m}^\mathbb{C} \rightarrow \mathfrak{m}^\mathbb{C}$ and by the $Ad^{G/K}$-invariance we get that for every root $\alpha \in R_M$
\begin{equation}
    J X_\alpha = \pm \sqrt{-1}X_\alpha.
\end{equation}
So, we define $\epsilon_\alpha$ to be the number, $1$ or $-1$ such that
\begin{equation}
    J X_\alpha = \epsilon_\alpha\sqrt{-1} X_\alpha,
\end{equation}
since $J^2 = -Id$ we get that $\epsilon_{-\alpha} = - \epsilon_\alpha$.
Hence, we get the following
\begin{proposition}
    Let $M=G/K$ be a flag manifold. The set of invariant almost-complex structures is in one-to-one correspondence with a set of numbers $\{\epsilon_\alpha\}_{\alpha \in R_M} \subset \{-1, 1\}$ such that $\epsilon_{-\alpha} = -\epsilon_{\alpha}$ and $\epsilon_\alpha = \epsilon_\beta$ if $\alpha \sim \beta$.
\end{proposition}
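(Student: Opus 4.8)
The plan is to prove the claimed bijection in both directions, using Schur's lemma applied to the isotropy decomposition of the preceding proposition, together with the reality of $J$ to control the behaviour under $\alpha\mapsto-\alpha$. Recall first that an invariant almost-complex structure is the same as a map $J:\mathfrak{m}\to\mathfrak{m}$ with $J^{2}=-\mathrm{Id}$ commuting with the isotropy representation; I complexify it to $J:\mathfrak{m}^{\mathbb{C}}\to\mathfrak{m}^{\mathbb{C}}$, which then commutes with the complexified isotropy representation $Ad^{G/K}$. By the preceding proposition, $\mathfrak{m}^{\mathbb{C}}=\bigoplus_{[\alpha]}V_{[\alpha]}$ with $V_{[\alpha]}=\bigoplus_{\beta\sim\alpha}\mathfrak{g}_{\beta}$ irreducible, and these summands are pairwise non-isomorphic, since distinct classes restrict to distinct characters of the central torus $T$ and hence act by distinct scalars there. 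Being a $\mathbb{C}$-linear intertwiner, $J$ preserves each isotypic component; by multiplicity-freeness each $V_{[\alpha]}$ is its own isotypic component, so Schur's lemma gives $J|_{V_{[\alpha]}}=c_{[\alpha]}\,\mathrm{Id}$. From $J^{2}=-\mathrm{Id}$ we get $c_{[\alpha]}^{2}=-1$, so $c_{[\alpha]}=\epsilon_{\alpha}\sqrt{-1}$ with $\epsilon_{\alpha}\in\{\pm1\}$; reading off $JX_{\beta}=\epsilon_{\alpha}\sqrt{-1}\,X_{\beta}$ for all $\beta\sim\alpha$ yields simultaneously the eigenvalue equation $JX_{\alpha}=\epsilon_{\alpha}\sqrt{-1}\,X_{\alpha}$ and the constraint $\epsilon_{\alpha}=\epsilon_{\beta}$ whenever $\alpha\sim\beta$.

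To obtain the second constraint I would exploit reality. Since $J$ preserves the real form $\mathfrak{m}$, it commutes with the conjugation $\sigma$ of $\mathfrak{g}^{\mathbb{C}}$ relative to the compact real form, which in the Weyl basis of Proposition~\ref{Liealgebras} satisfies $\sigma(X_{\alpha})=-X_{-\alpha}$. Comparing $J\sigma(X_{\alpha})$ with $\sigma J(X_{\alpha})$, and using $\overline{\sqrt{-1}}=-\sqrt{-1}$, forces $\epsilon_{-\alpha}=-\epsilon_{\alpha}$. Thus every invariant $J$ produces a family $\{\epsilon_{\alpha}\}_{\alpha\in R_M}\subset\{-1,1\}$ satisfying both stated conditions, and the family clearly determines $J$ uniquely.

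For the converse, given $\{\epsilon_{\alpha}\}$ with $\epsilon_{-\alpha}=-\epsilon_{\alpha}$ and $\epsilon_{\alpha}=\epsilon_{\beta}$ for $\alpha\sim\beta$, I would define $J$ on $\mathfrak{m}^{\mathbb{C}}$ by $JX_{\alpha}=\epsilon_{\alpha}\sqrt{-1}\,X_{\alpha}$ and extend $\mathbb{C}$-linearly, so that $J^{2}=-\mathrm{Id}$ is immediate. The constraint $\epsilon_{\alpha}=\epsilon_{\beta}$ on classes makes $J$ act as the single scalar $\epsilon_{\alpha}\sqrt{-1}$ on each summand $V_{[\alpha]}$; since every $Ad(k)$ preserves these summands, $J$ commutes with the complexified isotropy representation. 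The constraint $\epsilon_{-\alpha}=-\epsilon_{\alpha}$ is exactly what makes $J$ commute with $\sigma$, so $J$ restricts to a real endomorphism of $\mathfrak{m}$ commuting with $Ad^{G/K}$, i.e.\ to an invariant almost-complex structure. The two assignments are visibly mutually inverse, which yields the bijection.

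The representation-theoretic core—Schur's lemma plus the multiplicity-free decomposition of $\mathfrak{m}^{\mathbb{C}}$—is routine once the preceding proposition is invoked. I expect the point needing genuine care to be the reality bookkeeping: checking that commutation of $J$ with $\sigma$ is \emph{equivalent} to $\epsilon_{-\alpha}=-\epsilon_{\alpha}$ relies on the precise normalization $\sigma(X_{\alpha})=-X_{-\alpha}$ coming from Proposition~\ref{Liealgebras} and on tracking the conjugate $\overline{\sqrt{-1}}=-\sqrt{-1}$; a sign slip there would silently corrupt the correspondence, so this is the step I would write out most carefully.
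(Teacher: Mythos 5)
Your proof is correct, and its skeleton matches the paper's derivation (the paper proves this proposition only through the short discussion preceding its statement: complexify $J$, observe each $X_\alpha$ is an eigenvector with eigenvalue $\pm\sqrt{-1}$, and read off the sign constraints). But you justify two points the paper leaves unsupported or mis-attributes, and this is where your route genuinely differs. First, the paper simply asserts that $Ad^{G/K}$-invariance forces $JX_\alpha = \pm\sqrt{-1}X_\alpha$ and never argues $\epsilon_\alpha = \epsilon_\beta$ for $\alpha\sim\beta$; you obtain both at once from Schur's lemma applied to the multiplicity-free isotypic decomposition $\mathfrak{m}^{\mathbb{C}} = \bigoplus_{[\alpha]}V_{[\alpha]}$, using the central torus to separate the summands. (An alternative closer to the paper's one-line claim would use that $K \supset C(T)$ contains a maximal torus of $G$, whose weight spaces in $\mathfrak{m}^{\mathbb{C}}$ are exactly the one-dimensional root spaces; either mechanism works, but some such argument is needed and the paper gives none.) Second, the paper claims $\epsilon_{-\alpha} = -\epsilon_\alpha$ follows "since $J^2 = -Id$", which is not right: on $\mathfrak{m}^{\mathbb{C}}$ alone, $J^2 = -\mathrm{Id}$ is compatible with $\epsilon_{-\alpha} = \epsilon_\alpha$. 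Your derivation via the conjugation $\sigma$ of the compact real form (conjugate-linearity of $\sigma$ plus $\sigma(X_\alpha) = -X_{-\alpha}$, though note the sign in this normalization cancels and only the conjugate-linearity matters) is the correct reason, and it is exactly the reality constraint that the paper's argument implicitly relies on. Finally, you also verify the converse direction (that any admissible family $\{\epsilon_\alpha\}$ defines an invariant $J$), which the paper omits entirely; this is needed for the stated bijection.
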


\begin{remark}
    We also note that on a flag manifold $M=G/K$ endowed with an invariant almost-complex structure $J$, any invariant metric $\lambda$ is an almost-Hermitian metric on $(M,J)$.
\end{remark}

\begin{definition}
    Given a flag manifold $M=G/K$ endowed with an invariant almost-complex structure $J: \mathfrak{m} \rightarrow\mathfrak{m}$, we define the set of positive roots $R_M^+$ (related to $J$) as
    \begin{equation}
        R_M^+ = \{\alpha \in R_M \ | \ \epsilon_\alpha = 1\}. 
    \end{equation}
    Note that this might not define a set of positive roots in the Lie algebraic sense.\\
\end{definition}

\begin{remark}
    Using the above definition, given an almost-complex structure $J$ on a flag manifold $M=G/K$, we can identify the complexified tangent space with $T^{1,0} \mathfrak{m} = \bigoplus_{\alpha \in R_{M}^{+}}\mathbb{C}X_\alpha$ and the anti-complexified tangent space with $T^{0,1} \mathfrak{m} = \bigoplus_{\alpha \in -R_{M}^{+}}\mathbb{C}X_\alpha$.
\end{remark}

\begin{remark}
    In what follows an almost-complex structure $J$ and a metric $\lambda$, will always be understood to be invariant unless otherwise stated. 
\end{remark}

We can treat invariant metrics $\lambda$ and invariant almost-complex structures $J$ as set of numbers $\{\lambda_\alpha\}$, $\{\epsilon_\alpha\}$. And by using the basis $\{X_\alpha\}$ of $\mathfrak{m}^{\mathbb{C}}$ discussed in Proposition \ref{Liealgebras}, we can translate the geometrical properties discussed in section \ref{Almost-complex} to algebraic properties on these set of numbers. In fact, we have the following results (\cite{arvanitogeorgos2003introduction}, \cite{san2003invariant}).

\begin{proposition}\label{integrable}
    Given a flag manifold $M=G/K$ endowed with an almost-complex structure $J$ then, $J$ is integrable if, and only if, for every pair $\alpha,\beta \in R_M^+$, if $\alpha+\beta \in R_M$ we have $\alpha+\beta \in R_M^+$.
\end{proposition}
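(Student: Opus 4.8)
The plan is to reduce the vanishing of the Nijenhuis tensor to a purely root-theoretic statement by evaluating $N$ on the root basis $\{X_\alpha\}_{\alpha\in R_M}$ of $\mathfrak{m}^{\mathbb{C}}$. Since $N$ is a tensor and $J$ is invariant, it suffices to check $N$ at the origin $o=eK$, where $T_oM\cong\mathfrak{m}$, and to extend $N$ complex-bilinearly to $\mathfrak{m}^{\mathbb{C}}$. The first step is to record the expression of $N$ at $o$ in terms of Lie brackets: for $X,Y\in\mathfrak{m}$,
\[
N(X,Y)=\big([JX,JY]-J[JX,Y]-J[X,JY]-[X,Y]\big)_{\mathfrak{m}},
\]
where $(\cdot)_{\mathfrak{m}}$ denotes projection onto $\mathfrak{m}^{\mathbb{C}}$ along $\mathfrak{k}^{\mathbb{C}}$ and $J$ is applied only after projecting (as $J$ is defined on $\mathfrak{m}$ alone). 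This is the standard reduction for invariant almost-complex structures on reductive homogeneous spaces, and only the vanishing of the right-hand side matters, so an overall sign convention is irrelevant.

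Next I would compute $N(X_\alpha,X_\beta)$ using $JX_\gamma=\epsilon_\gamma\sqrt{-1}\,X_\gamma$ and $[X_\alpha,X_\beta]=m_{\alpha,\beta}X_{\alpha+\beta}$. If $\alpha+\beta\notin R_M$, then $[X_\alpha,X_\beta]$ lies in $\mathfrak{k}^{\mathbb{C}}$ (when $\alpha+\beta\in R_K$ or $\alpha+\beta=0$) or vanishes (when $\alpha+\beta\notin R$), so every term has trivial $\mathfrak{m}$-projection and $N(X_\alpha,X_\beta)=0$ automatically. When $\alpha+\beta\in R_M$, collecting the four terms yields
\[
N(X_\alpha,X_\beta)=m_{\alpha,\beta}\big(-\epsilon_\alpha\epsilon_\beta+\epsilon_\alpha\epsilon_{\alpha+\beta}+\epsilon_\beta\epsilon_{\alpha+\beta}-1\big)\,X_{\alpha+\beta}.
\]
The coefficient factors as $-(\epsilon_\alpha\epsilon_\beta+1)+\epsilon_{\alpha+\beta}(\epsilon_\alpha+\epsilon_\beta)$: when $\epsilon_\alpha\neq\epsilon_\beta$ it is identically $0$, and when $\epsilon_\alpha=\epsilon_\beta$ it equals $2(\epsilon_\alpha\epsilon_{\alpha+\beta}-1)$, which vanishes precisely when $\epsilon_{\alpha+\beta}=\epsilon_\alpha=\epsilon_\beta$.

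With this in hand, both implications follow. For the forward direction, integrability forces $N(X_\alpha,X_\beta)=0$ for all $\alpha,\beta\in R_M^+$ with $\alpha+\beta\in R_M$; since $\alpha+\beta$ is then a root, $m_{\alpha,\beta}\neq0$, so the coefficient must vanish, forcing $\epsilon_{\alpha+\beta}=1$, i.e.\ $\alpha+\beta\in R_M^+$. For the converse, assuming the root condition I would verify $N(X_\alpha,X_\beta)=0$ for every basis pair: the cases $\alpha+\beta\notin R_M$ and $\epsilon_\alpha\neq\epsilon_\beta$ are automatic; the case $\epsilon_\alpha=\epsilon_\beta=1$ is the hypothesis; and the case $\epsilon_\alpha=\epsilon_\beta=-1$ follows by applying the hypothesis to $-\alpha,-\beta\in R_M^+$ together with $\epsilon_{-\gamma}=-\epsilon_\gamma$, which gives $-(\alpha+\beta)\in R_M^+$ and hence $\epsilon_{\alpha+\beta}=-1$. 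Bilinearity of $N$ then gives $N\equiv0$.

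The step I expect to require the most care is the first one: justifying the Lie-algebraic formula for $N$ at the origin, in particular the bookkeeping of the $\mathfrak{k}^{\mathbb{C}}$-components and the fact that $J$ must be applied only after projecting onto $\mathfrak{m}$. An alternative that sidesteps the tensor formula is to use that $J$ is integrable if and only if $T^{1,0}\mathfrak{m}=\bigoplus_{\alpha\in R_M^+}\mathbb{C}X_\alpha$ is involutive, which on a homogeneous space amounts to $\mathfrak{k}^{\mathbb{C}}\oplus\bigoplus_{\alpha\in R_M^+}\mathbb{C}X_\alpha$ being a subalgebra; the relation $[X_\alpha,X_\beta]=m_{\alpha,\beta}X_{\alpha+\beta}$ shows this holds exactly when $\alpha+\beta\in R_M^+$ whenever $\alpha,\beta\in R_M^+$ and $\alpha+\beta\in R_M$, recovering the same criterion. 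The remaining point to keep in mind is that $m_{\alpha,\beta}\neq0$ whenever $\alpha+\beta$ is a root, which is used only in the forward implication.
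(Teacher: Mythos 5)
Your proof is correct, but note that the paper offers no proof of this proposition at all: it is stated as a known result, quoted from the cited references (Arvanitogeorgos's book and San Martin--Negreiros). So the comparison is really with the literature, and your argument is essentially the standard one found there: evaluate the Nijenhuis tensor on the root basis, observe that only pairs with $\epsilon_\alpha=\epsilon_\beta$ and $\alpha+\beta\in R_M$ contribute, and use $m_{\alpha,\beta}\neq 0$ for the forward implication. The one step you flag as delicate --- the Lie-algebraic formula for $N$ at the origin, with brackets projected to $\mathfrak{m}^{\mathbb{C}}$ and $J$ applied only after projection --- is indeed the only point needing justification, and it does hold: the canonical connection $\tilde\nabla$ of the reductive decomposition parallelizes every invariant tensor (so $\tilde\nabla J=0$) and has torsion $T(X,Y)=-[X,Y]_{\mathfrak{m}}$ at the origin, and substituting this into the identity $N(X,Y)=T(X,Y)+JT(JX,Y)+JT(X,JY)-T(JX,JY)$, valid for any connection satisfying $\nabla J=0$, yields exactly the formula you use. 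Alternatively, the involutivity argument you sketch at the end (integrability of $J$ is equivalent to $\mathfrak{k}^{\mathbb{C}}\oplus\bigoplus_{\alpha\in R_M^{+}}\mathbb{C}X_\alpha$ being a subalgebra of $\mathfrak{g}^{\mathbb{C}}$, because invariant distributions on $G/K$ pull back to left-invariant distributions on $G$) sidesteps this point entirely and is the cleanest self-contained route; either way, your case analysis, including the use of $\epsilon_{-\gamma}=-\epsilon_\gamma$ to handle the case $\epsilon_\alpha=\epsilon_\beta=-1$ in the converse, is correct and complete.
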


\begin{proposition}\label{kahler}
Given a flag manifold $M=G/K$ endowed with an almost-complex structure $J$ and a metric $\lambda$ then, $(M,\lambda,J)$ is a Kähler manifold, if and only if,
\begin{equation}
    \epsilon_\alpha \lambda_\alpha +\epsilon_\beta \lambda_\beta = \epsilon_{\alpha+\beta}\lambda_{\alpha+\beta},
\end{equation}
for every pair of roots $\alpha, \beta \in R_M$, whenever $\alpha+\beta \in R_M$.
\end{proposition}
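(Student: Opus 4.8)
The plan is to use the standard characterization that $(M,\lambda,J)$ is Kähler precisely when $J$ is integrable and the fundamental form $\omega$ is closed, and to translate both conditions into statements about the numbers $\{\epsilon_\alpha\}$ and $\{\lambda_\alpha\}$ using the root basis $\{X_\alpha\}$ of Proposition \ref{Liealgebras}. First I would compute $d\omega$ on the invariant frame. Since $\omega$ is $G$-invariant, the directional-derivative terms in the Cartan formula for $d\omega$ drop out, leaving the purely algebraic expression
\[
d\omega(X,Y,Z) = -\omega([X,Y],Z) + \omega([X,Z],Y) - \omega([Y,Z],X),
\]
valid for $X,Y,Z \in \mathfrak{m}^{\mathbb{C}}$. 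Evaluating on basis vectors $X_\alpha, X_\beta, X_\gamma$ and using $\omega(X_\mu, X_\nu) = \epsilon_\mu\sqrt{-1}\,\lambda(X_\mu, X_\nu)$ together with the fact that $B$ (hence $\lambda$) pairs $X_\mu$ only with $X_{-\mu}$, one sees that $d\omega(X_\alpha, X_\beta, X_\gamma)$ can be nonzero only when $\alpha + \beta + \gamma = 0$.

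So it suffices to treat $\gamma = -\alpha - \beta$ with $\alpha, \beta \in R_M$ and $\alpha + \beta \in R_M$ (if $\alpha+\beta \notin R$, or if $\alpha+\beta$ lies in $R_K$ or equals $0$, the relevant bracket lands outside $\mathfrak{m}^{\mathbb{C}}$ and the form vanishes). Here I would substitute $[X_\alpha, X_\beta] = m_{\alpha,\beta}X_{\alpha+\beta}$ into the three terms and simplify the brackets using the symmetries (a)--(c) of the structure constants in Proposition \ref{Liealgebras}, namely $m_{\alpha, -\alpha-\beta} = -m_{\alpha,\beta}$ and $m_{\beta, -\alpha-\beta} = m_{\alpha,\beta}$. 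A short computation then collapses the three terms into
\[
d\omega(X_\alpha, X_\beta, X_{-\alpha-\beta}) = m_{\alpha,\beta}\sqrt{-1}\,\bigl(\epsilon_\alpha\lambda_\alpha + \epsilon_\beta\lambda_\beta - \epsilon_{\alpha+\beta}\lambda_{\alpha+\beta}\bigr),
\]
up to an overall nonzero constant coming from the normalization of $\overline{X_\alpha}$ relative to $X_{-\alpha}$ in the chosen compact real form. Since $\alpha+\beta \in R$ forces $m_{\alpha,\beta}\neq 0$, the vanishing of $d\omega$ is exactly equivalent to the stated identity holding for all pairs $\alpha, \beta \in R_M$ with $\alpha+\beta \in R_M$.

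It then remains to reconcile closedness with integrability so that the single algebraic condition captures both. The implication Kähler $\Rightarrow$ identity is immediate, since Kähler gives both integrability and $d\omega = 0$. Conversely, assuming the identity, I would recover integrability from Proposition \ref{integrable}: for $\alpha, \beta \in R_M^+$ with $\alpha+\beta \in R_M$ we have $\epsilon_\alpha = \epsilon_\beta = 1$, so the identity reads $\lambda_\alpha + \lambda_\beta = \epsilon_{\alpha+\beta}\lambda_{\alpha+\beta}$; positivity of the $\lambda$'s forces $\epsilon_{\alpha+\beta} = 1$, i.e. $\alpha+\beta \in R_M^+$, which is precisely the integrability criterion. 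Combined with $d\omega = 0$, this yields that $(M,\lambda,J)$ is Kähler.

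The main obstacle I anticipate is bookkeeping rather than conceptual: keeping the signs consistent through the conjugation convention $\overline{X_\alpha} = \pm X_{-\alpha}$ and the $\mathbb{C}$-bilinear extension of $\omega$, and making sure the three structure-constant terms combine with the correct signs via (a)--(c). One should also verify that the genuinely degenerate cases ($\alpha+\beta = 0$ or $\alpha+\beta \in R_K$) contribute nothing, so that the quantifier ``$\alpha+\beta \in R_M$'' in the statement is exactly the correct range.
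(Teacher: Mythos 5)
Your proof is correct, but there is nothing in the paper to compare it against: the paper does not prove this proposition at all, quoting it (together with Propositions \ref{integrable} and \ref{quasi--kahler}) from the literature (\cite{arvanitogeorgos2003introduction}, \cite{san2003invariant}). Your argument is essentially the standard one found in those references: the Koszul/Chevalley--Eilenberg formula for the differential of an invariant form reduces $d\omega=0$ to an algebraic identity in the Weyl basis, and I checked the key computation --- with $\gamma=-\alpha-\beta$ and the structure-constant symmetries $m_{\alpha,-\alpha-\beta}=-m_{\alpha,\beta}$, $m_{\beta,-\alpha-\beta}=m_{\alpha,\beta}$, the three bracket terms do collapse to $\sqrt{-1}\,m_{\alpha,\beta}\bigl(\epsilon_{\alpha+\beta}\lambda_{\alpha+\beta}-\epsilon_\alpha\lambda_\alpha-\epsilon_\beta\lambda_\beta\bigr)$, which is nonzero-coefficient precisely because $m_{\alpha,\beta}\neq0$ whenever $\alpha+\beta\in R$. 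Your closing step, deducing integrability from the identity itself via Proposition \ref{integrable} and positivity of the $\lambda_\alpha$, is also sound, and as a bonus it effectively reproves the paper's later assertion that almost-Kähler invariant structures on flag manifolds are automatically Kähler.
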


\begin{proposition}\label{quasi--kahler}
    Given a flag manifold $M=G/K$ endowed with an almost-complex structure $J$ and a metric $\lambda$ then, $(M,\lambda,J)$ is a quasi-Kähler manifold, if and only if, for every pair of roots $\alpha, \beta \in R_M^+$ such that $\alpha + \beta \in R_M^+$ we have
    \begin{equation}
        \lambda_\alpha +\lambda_\beta = \lambda_{\alpha+\beta}.
    \end{equation}
\end{proposition}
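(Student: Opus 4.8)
The plan is to reduce the analytic condition $d\omega^{1,2}=0$ to an algebraic identity on the structure constants, exploiting that $\omega$ is $G$-invariant and therefore completely determined by its value on $\mathfrak{m}$. Since the decomposition $\mathfrak{g}=\mathfrak{k}\oplus\mathfrak{m}$ is reductive, the exterior derivative of an invariant form is given by the Chevalley–Eilenberg formula in which each bracket is replaced by its $\mathfrak{m}$-component; for the two-form $\omega$ this reads
\[
d\omega(X,Y,Z) = -\omega([X,Y]_{\mathfrak{m}}, Z) + \omega([X,Z]_{\mathfrak{m}}, Y) - \omega([Y,Z]_{\mathfrak{m}}, X),
\]
with $X,Y,Z\in\mathfrak{m}^{\mathbb{C}}$. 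The first task is to record the two inputs to this formula in the root basis of \autoref{Liealgebras}: namely $[X_\alpha,X_\beta]_{\mathfrak{m}} = m_{\alpha,\beta}X_{\alpha+\beta}$ when $\alpha+\beta\in R_M$ (and $0$ otherwise), and $\omega(X_\mu,X_\nu)=\sqrt{-1}\,\epsilon_\mu\,\lambda(X_\mu,X_\nu)$, which is nonzero only when $\nu=-\mu$ because $B(\mathfrak{g}_\mu,\mathfrak{g}_\nu)=0$ unless $\mu+\nu=0$.

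Next I would evaluate $d\omega$ on triples of root vectors $(X_\alpha,X_\beta,X_\gamma)$. A term survives only if the two indices paired inside $\omega$ after taking the bracket are opposite, which forces $\alpha+\beta+\gamma=0$; hence the only triples to consider have the form $(X_\alpha,X_\beta,X_{-\alpha-\beta})$ with $\alpha+\beta\in R_M$. Substituting the brackets and the values of $\omega$, then simplifying with the structure-constant relations (a)–(c) of \autoref{Liealgebras} (in particular $m_{\beta,-\alpha-\beta}=m_{\alpha,\beta}$ and $m_{\alpha,-\alpha-\beta}=-m_{\alpha,\beta}$) together with $\epsilon_{-\mu}=-\epsilon_\mu$ and $\lambda_{-\mu}=\lambda_\mu$, I expect to obtain
\[
d\omega(X_\alpha, X_\beta, X_{-\alpha-\beta}) = \sqrt{-1}\, m_{\alpha,\beta}\left(\epsilon_\alpha \lambda_\alpha + \epsilon_\beta \lambda_\beta - \epsilon_{\alpha+\beta}\lambda_{\alpha+\beta}\right).
\]
As a sanity check, demanding that this vanish for every such triple recovers exactly the Kähler condition of \autoref{kahler}.

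The final step is the type analysis selecting the $(1,2)$ part (equivalently, by conjugation and the reality of $\omega$, the $(2,1)$ part). A root vector $X_\mu$ lies in $T^{1,0}$ precisely when $\mu\in R_M^+$ and in $T^{0,1}$ when $-\mu\in R_M^+$, so the bidegree of the contribution of $(X_\alpha,X_\beta,X_{-\alpha-\beta})$ is read off from how many of the three roots lie in $R_M^+$. Taking $\alpha,\beta\in R_M^+$, there are two cases: if $\alpha+\beta\in R_M^+$ then the triple has type $(2,1)$ and the displayed formula reduces to $\sqrt{-1}\,m_{\alpha,\beta}(\lambda_\alpha+\lambda_\beta-\lambda_{\alpha+\beta})$; if instead $-(\alpha+\beta)\in R_M^+$ the triple has type $(3,0)$ and contributes only to $d\omega^{3,0}$. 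Since quasi-Kähler requires only $d\omega^{1,2}=0$, I would conclude that the condition is exactly $\lambda_\alpha+\lambda_\beta=\lambda_{\alpha+\beta}$ for all $\alpha,\beta\in R_M^+$ with $\alpha+\beta\in R_M^+$ (using $m_{\alpha,\beta}\neq0$ whenever $\alpha+\beta\in R_M$), the $(3,0)/(0,3)$ triples imposing no restriction.

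I expect the main obstacle to be the bookkeeping of the $(p,q)$-decomposition rather than any single computation: one must argue carefully that every $(2,1)$-contribution of $d\omega$ arises (up to the antisymmetry of $\omega$ under reordering the three arguments) from a Case-A triple, so that no constraints are missed and none are spuriously imported from the $(3,0)$ triples. A secondary point requiring care is fixing the convention relating $\lambda(X_\mu,X_{-\mu})$ to $\lambda_\mu=\lambda(X_\mu,\overline{X_\mu})$ and the associated global sign in $\omega(X_\mu,X_{-\mu})$; since the resulting identity is homogeneous, any such global normalization cancels, but it should be pinned down to keep the signs in the displayed formula honest.
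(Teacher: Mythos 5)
The paper offers no proof of this proposition to compare against: it is stated as a known result, imported from \cite{arvanitogeorgos2003introduction} and \cite{san2003invariant} alongside Propositions~\ref{integrable} and~\ref{kahler}, so you have supplied an argument where the paper only supplies a citation. Your proof is correct, and it is essentially the standard argument from that literature. The three ingredients all check out: (i) the Chevalley--Eilenberg formula for invariant $2$-forms on a reductive homogeneous space is legitimate because $\pi^*\omega$ is a left-invariant horizontal form on $G$, so the derivative terms of Koszul's formula drop out (with Killing fields in place of left-invariant fields the formula acquires a global sign, which, as you note, is immaterial for a vanishing statement); (ii) since $\omega(X_\mu,X_\nu)\neq 0$ forces $\nu=-\mu$, only triples $(X_\alpha,X_\beta,X_{-\alpha-\beta})$ with all three roots in $R_M$ contribute, and the identities of Proposition~\ref{Liealgebras} give $d\omega(X_\alpha,X_\beta,X_{-\alpha-\beta})=\pm\sqrt{-1}\,m_{\alpha,\beta}\bigl(\epsilon_\alpha\lambda_\alpha+\epsilon_\beta\lambda_\beta-\epsilon_{\alpha+\beta}\lambda_{\alpha+\beta}\bigr)$, whose total vanishing indeed recovers the K\"ahler condition of Proposition~\ref{kahler}; (iii) the type bookkeeping is right: because the three roots sum to zero, a contribution has type $(2,1)$ exactly when two of the roots, say $\alpha,\beta$, lie in $R_M^+$ and $\alpha+\beta\in R_M^+$, in which case $\epsilon_\alpha=\epsilon_\beta=\epsilon_{\alpha+\beta}=1$ and the expression reduces to $\lambda_\alpha+\lambda_\beta-\lambda_{\alpha+\beta}$, while triples with $-(\alpha+\beta)\in R_M^+$ are pure $(3,0)$ and are unconstrained by $d\omega^{1,2}=0$. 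Combined with $m_{\alpha,\beta}\neq 0$ whenever $\alpha+\beta\in R$, and the fact that $d\omega^{1,2}=0$ if and only if $d\omega^{2,1}=0$ because $\omega$ is real (a point you invoke and should state explicitly), this yields exactly the claimed equivalence. One last detail worth writing down: pairs $\alpha,\beta\in R_M^+$ with $\alpha+\beta\in R_K$ contribute nothing, since then $[X_\alpha,X_\beta]_{\mathfrak{m}}=0$ and $-\alpha-\beta\notin R_M$, so restricting the condition to $\alpha+\beta\in R_M^+$ loses no constraints.
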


The last result is about almost-Kähler flag manifolds.

\begin{proposition}
    Given a flag manifold $M=G/K$ endowed with an almost-complex structure $J$ and a metric $\lambda$. If $(M,\lambda,J)$ is almost-Kähler then, it is Kähler.
\end{proposition}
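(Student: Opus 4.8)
The plan is to reduce the statement to showing that the almost-Kähler hypothesis alone forces $J$ to be integrable: once $J$ is integrable and $d\omega = 0$, the manifold is Kähler by definition, so the entire content is the implication $d\omega = 0 \Rightarrow J$ integrable. First I would record the formula for the exterior derivative of an invariant form on a reductive homogeneous space. Identifying $\omega$ with a $K$-invariant element of $\Lambda^2\mathfrak{m}^*$, the Chevalley--Eilenberg formula with brackets projected onto $\mathfrak{m}$ gives, for $X,Y,Z \in \mathfrak{m}$,
\[
d\omega(X,Y,Z) = -\omega([X,Y]_{\mathfrak{m}},Z) + \omega([X,Z]_{\mathfrak{m}},Y) - \omega([Y,Z]_{\mathfrak{m}},X),
\]
where $[\cdot,\cdot]_{\mathfrak{m}}$ is the $\mathfrak{m}$-component. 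I would then evaluate this on the basis vectors $X_\alpha, X_\beta, X_\gamma$ of Proposition~\ref{Liealgebras}. Two facts localize the computation: $[X_\alpha,X_\beta]_{\mathfrak{m}} = m_{\alpha,\beta}X_{\alpha+\beta}$ is nonzero only when $\alpha+\beta \in R_M$, and $\omega(X_\mu,X_\nu) = g(JX_\mu,X_\nu)$ is nonzero only when $\mu+\nu = 0$ (since $B(X_\mu,X_\nu)\neq 0$ only for $\nu=-\mu$). Hence $d\omega(X_\alpha,X_\beta,X_\gamma)$ can be nonzero only when $\alpha+\beta+\gamma = 0$.

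The key step is to isolate the $(3,0)$-component, i.e. to take $\alpha,\beta,\gamma \in R_M^+$ with $\alpha+\beta+\gamma = 0$. Using $\omega(X_{-\delta},X_\delta) = \sqrt{-1}\,\lambda_\delta$ for $\delta \in R_M^+$ (a fixed nonzero multiple of $\lambda_\delta$, the constant being the same for every $\delta$) together with the structure-constant identities $m_{\alpha,\beta} = m_{\beta,\gamma} = m_{\gamma,\alpha}$ and $m_{\alpha,\gamma} = -m_{\gamma,\alpha}$ read off from Proposition~\ref{Liealgebras}, the three terms \emph{add} rather than cancel, giving
\[
d\omega(X_\alpha,X_\beta,X_\gamma) = -\sqrt{-1}\,m_{\alpha,\beta}\,\bigl(\lambda_\alpha + \lambda_\beta + \lambda_{\alpha+\beta}\bigr).
\]
Since all $\lambda_\delta > 0$ and $m_{\alpha,\beta}\neq 0$ whenever $\alpha+\beta$ is a root, this quantity is always nonzero.

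I would finish by contraposition. If $J$ were not integrable, Proposition~\ref{integrable} would produce $\alpha,\beta \in R_M^+$ with $\alpha+\beta \in R_M$ but $\alpha+\beta \notin R_M^+$; setting $\gamma = -(\alpha+\beta) \in R_M^+$ yields $\alpha,\beta,\gamma \in R_M^+$ summing to zero, so the displayed nonzero value contradicts $d\omega = 0$. Thus $d\omega = 0$ forces integrability, and combined with $d\omega = 0$ this is precisely the Kähler condition.

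The main obstacle I anticipate is bookkeeping rather than conceptual: pinning down the exact scalar in $\omega(X_{-\delta},X_\delta)$ under the paper's normalization of the Weyl basis, and verifying that the structure-constant symmetries make the three summands share a sign so that the $(3,0)$-part genuinely detects non-integrability instead of cancelling accidentally. The conceptual point that the $(3,0)$-component of $d\omega$ is exactly the obstruction to integrability of $J$ is what drives the whole argument, and everything else is a routine, if sign-sensitive, verification.
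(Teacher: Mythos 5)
Your proof is correct. Note that the paper itself never proves this proposition: it is quoted as a known fact, attributed to the references on invariant almost-Hermitian structures on flag manifolds (Arvanitogeorgos; San Martin--Negreiros), so there is no internal proof to compare against. Your argument is essentially the standard one from that literature, and it is complete: the formula you use for $d\omega$ of an invariant $2$-form is the right one (the sign flip $[X^*,Y^*]=-[X,Y]^*$ for fundamental vector fields is exactly what produces the minus signs you wrote, and in any case the overall sign is immaterial); the localization to triples with $\alpha+\beta+\gamma=0$ is correct because $\lambda$ pairs $\mathfrak{g}_\mu$ only with $\mathfrak{g}_{-\mu}$; under the paper's conventions ($\lambda(X_{-\delta},X_\delta)=-\lambda_\delta$, $JX_{-\delta}=-\sqrt{-1}X_{-\delta}$ for $\delta\in R_M^+$) one indeed gets $\omega(X_{-\delta},X_\delta)=\sqrt{-1}\lambda_\delta$ with the same constant for all positive roots; and the cyclic identity $m_{\alpha,\beta}=m_{\beta,\gamma}=m_{\gamma,\alpha}$ from Proposition~\ref{Liealgebras}(c) together with antisymmetry forces the three terms to add, giving $d\omega(X_\alpha,X_\beta,X_\gamma)=-\sqrt{-1}\,m_{\alpha,\beta}(\lambda_\alpha+\lambda_\beta+\lambda_\gamma)\neq 0$ since $m_{\alpha,\beta}\neq 0$ when $\alpha+\beta$ is a root. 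The contrapositive step via Proposition~\ref{integrable} (non-integrability yields $\alpha,\beta\in R_M^+$ with $-(\alpha+\beta)\in R_M^+$) is exactly the right negation. This is also the conceptually correct mechanism: on a general almost-Hermitian manifold $d\omega=0$ does not imply integrability, and your proof makes visible where invariance and the root structure of flag manifolds enter.
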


\subsection{Examples}

Finally, we consider two examples of flag manifolds, each a realization of complex projective space as a homogeneous space.  Moreover, by the classification in \cite{ziller1982homogeneous}, these are the only two realizations of \(\mathbb{C}P^{n-1}\) as a homogeneous space.

\begin{enumerate}
    \item [1)] $\mathbb{C}P^{n-1} = SU(n)/S( U(n-1) \times U(1))$\\

Similar to what we have seen in Example \ref{exemplo1}, the roots of the semisimple group $SU(n)$ are $R = \{\alpha_{ij} \ | \ i \neq j \} $ where
\begin{equation}
    \alpha_{ij} (\rm{Diag}(a_1 ,..., a_n ) = a_i -a_j .
\end{equation}
A set of simple roots is $\Pi = \{\alpha_{12} ,..., \alpha_{n-1 , n}\}$ and the associated Dynkin diagram $A_n$ is
\begin{equation}
      \begin{dynkinDiagram}[scale=2, mark=*]A{}
 \dynkinRootMark{o}1
  \dynkinRootMark{o}2
   \dynkinRootMark{o}3
    \dynkinRootMark{o}4
 \end{dynkinDiagram}.
\end{equation}

By choosing $\Pi_K = \{\alpha_{12} ,..., \alpha_{n-2,n-1}\}$ (or equivalently $\Pi_K = \{\alpha_{23} ,..., \alpha_{n-1,n}\}$) we get $\mathbb{C}P^{n-1}=SU(n)/S( U(n-1) \times U(1))$ with its associated painted Dynkin diagram 
\begin{equation}
      \begin{dynkinDiagram}[scale=2, mark=*]A{}
 \dynkinRootMark{o}4
 \end{dynkinDiagram}.
\end{equation}

The set of roots on the flag is $R_M = \{\alpha_{ij} \ | \ i=1\  \text{or} \ j=1\}$. Therefore, the isotropy decomposition has only one summand and hence, admits a unique integrable complex structure $J$, by setting $\epsilon_{\alpha_{1j}} = 1$ and $\epsilon_{\alpha_{j1}} = -1$. It also admits a unique (up to parametrization) metric $\lambda = -B$. This metric is the Fubini study metric and $(\mathbb{C}P^{n-1} , \lambda , J)$ is a Kähler-Einstein manifold.

\item[2)] $\mathbb{C}P^{2n-1} = Sp(n)/Sp(n-1) \times U(1)$.\\

First, we will analyze the Lie algebra of $Sp(n)$ which is the $C_n$ Lie algebra, for more details see \cite{san1999algebras}.

The lie algebra $\mathfrak{sp}(n)$ is composed by the $2n\times2n$ matrices of the form
\begin{equation}
    \left(\begin{array}{cc}
        \alpha & \beta \\
        \gamma & -\alpha^t
    \end{array}\right),
\end{equation}
where $\beta $ and $\gamma$ are symmetric matrices. The Cartan subalgebra is\\
$\mathfrak{h}=\left(\begin{array}{cc}
\Lambda&\\
&-\Lambda
\end{array}\right)$, where $\Lambda=\rm{Diag} (x_1 ,... x_n )$.

Consider the linear functionals given by
\begin{equation}
\lambda_{i}:\Lambda=\rm{Diag}\{x_1 ,..., x_n\}\longrightarrow x_i   .  
\end{equation}
The space of roots $R$ is composed by the following roots
\begin{enumerate}
    \item $\pm(\lambda_i - \lambda_j )$ for $i \neq j$,
    \item $\pm(\lambda_i + \lambda_j )$ for $i \neq j$,
    \item $\pm2\lambda_i$, 
\end{enumerate}
where $i$ and $j$ varies through $1,...,n$.\\
A set of simple roots is
\begin{equation}
    \Pi = \{ \lambda_1 - \lambda_2 , ..., \lambda_{n-1} - \lambda_n , 2\lambda_n \},
\end{equation}
the corresponding Dynkin diagram is
\begin{equation}
     \begin{dynkinDiagram}[scale=2, mark=*]C{}
 \dynkinRootMark{o}1
  \dynkinRootMark{o}2
   \dynkinRootMark{o}3
    \dynkinRootMark{o}4
      \dynkinRootMark{o}5
 \end{dynkinDiagram}.
\end{equation}
  and the roots vectors, described in Proposition \ref{Liealgebras} are

 \begin{equation}
     \begin{aligned}
         & X_{\lambda_i - \lambda_j } = \frac{1}{2\sqrt{n+1}}(E_{ij} - E_{n+j,n+i}),\\
         &X_{\lambda_i + \lambda_j } = \frac{1}{2\sqrt{n+1}}(E_{i,n+j} + E_{j,n+i}),\\
         &X_{-\lambda_i - \lambda_j } = \frac{1}{2\sqrt{n+1}}(E_{n+i,j} + E_{n+j,i}),\\
         &X_{2\lambda_i   } = \frac{1}{\sqrt{2(n+1)}}E_{i,n+i}, \\   &X_{-2\lambda_i   } = \frac{1}{\sqrt{2(n+1)}}E_{n+i,i}, \\
     \end{aligned}
 \end{equation}
where $E_{i,j}$ is the $2n\times2n$ matrix with zero in all coordinates except the $i,j$-th coordinate, which equals one.

Now, the flag manifold determined by the choice\\ $\Pi_K = \{ \lambda_2 - \lambda_3 , ..., \lambda_{n-1} - \lambda_n , 2\lambda_n \}$ is $\mathbb{C}P^{2n-1} = Sp(n)/Sp(n-1) \times U(1)$ and the corresponding painted Dynkin diagram is
\begin{equation}
      \begin{dynkinDiagram}[scale=2, mark=*]C{}
  \dynkinRootMark{o}1
      \end{dynkinDiagram}.
\end{equation}

The set of roots $R_M$ is composed by
\begin{equation}
    R_M = \pm\{(\lambda_1 - \lambda_j ) , (\lambda_1 + \lambda_j ), 2\lambda_1   \ | \ j =2,...,n \}.
\end{equation}

The isotropy representation is given by
\begin{equation}
    \mathfrak{m} = \Big( \bigoplus_j \mathfrak{m}_{\lambda_1 - \lambda_j} \bigoplus_j \mathfrak{m}_{\lambda_1 +\lambda_j} \Big) \oplus \mathfrak{m}_{2\lambda_1},
\end{equation}
which has $2$ isotropic summands. Therefore, any invariant metric on it can be expressed as a pair $(a,b)$ for $a,b >0$, or to simplify we will rescalonate the metric to be of the form $(1,t)$.\\

There are also two non-equivalent almost-complex structures. We denote by $J_1 =(+,+)$ the one that sets $\epsilon_{\lambda_1 - \lambda_j} = \epsilon_{\lambda_1 +\lambda_j }= \epsilon_{2\lambda_1} =1$ and we denote by $J_2 =(+,-)$ the almost-complex structure given by $\epsilon_{\lambda_1 - \lambda_j} = \epsilon_{\alpha_1 +\lambda_j }=1$ and $\epsilon_{2\lambda_1} =-1$.\\

Since $(\lambda_1 - \lambda_j )  +(\lambda_1 +\lambda_j ) = 2\lambda_1$ from Propositions \ref{integrable} and \ref{kahler} we get that the almost-complex structure $J_1 = (+,+)$ is integrable and $J_2 = (+,-)$ is non-integrable. The metric $(1,2)$ is the unique Kähler metric for $J_1 =(+,+)$. Furthermore, for $J_2=(+,-)$ we note that there are no $\alpha, \beta \in R_M^{+}$ such that $\alpha+\beta \in R_M^{+}$ so, from  Proposition \ref{quasi--kahler} we get that any metric $(1,t)$ is quasi-Kähler.

Lastly, we note that the unique Kähler metric $\lambda = (1,2)$ on\\ $(\frac{Sp(n)}{Sp(n-1) \times U(1)} , J_1 )$ is the same as the Fubini-study metric. This happens because, as we have seen, $Sp(n)/Sp(n-1) \times U(1)$ has a unique $Sp(n)$-invariant integrable complex structure and has a unique $Sp(n)$-invariant Kähler metric associated with it. Since $Sp(n) \subset SU(2n)$, the Fubini-Study metric and its associated complex structure are also $Sp(n)$-invariant and, therefore, equal to $J_1 = (+,+)$ and $\lambda= (1,2)$.

\end{enumerate}

\section{Curvature positivity on Kähler flag manifolds}

In this section, we compile a series of results to classify all invariant Kähler metrics on flag manifolds that have positive curvature.\\

Let $M=G/K$ be a flag manifold, endowed with an integrable complex structure $J$. Theorem \ref{Submersions} gives us a way to construct a metric on $M$, that is dual-Nakano semi-positive, from bi-invariant metrics on $G$. Since $G$ is semisimple, the only bi-invariant metric (up to parametrization) is the negative of the killing form $B$. Thus, we have the following:

\begin{proposition}\label{lambda=1}
Let $M=G/K$ be a flag manifold, endowed with an integrable complex structure $J$. Let
\begin{equation*}
    \mathfrak{g} = \mathfrak{k} \oplus \mathfrak{m}
\end{equation*}
be the reductive decomposition, then the metric $-B|_{\mathfrak{m}}$ on $(M,J)$ is dual-Nakano semi-positive.\\
In other words, on every flag manifold endowed with a complex structure, the metric $\lambda$ given by $\lambda_\alpha = 1$, for all $\alpha \in R_M$ is a dual-Nakano semi-positive metric.
    
\end{proposition}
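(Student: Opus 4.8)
The plan is to realize the metric $-B|_{\mathfrak{m}}$ as a Riemannian submersion metric and then invoke Theorem~\ref{Submersions} directly. Since $G$ is compact and semisimple, the negative Killing form $-B$ is a bi-invariant metric on $G$; in particular it is right-invariant, hence an admissible choice for the metric $g$ in the hypothesis of Theorem~\ref{Submersions}. Because $J$ is integrable and $M=G/K$ is a complex homogeneous manifold, all the hypotheses of that theorem will be in place once the submersion metric is identified.

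First I would identify the induced submersion metric $\pi_*(-B)$ with $-B|_{\mathfrak{m}}$. As $-B$ is bi-invariant, the reductive complement may be taken to be $\mathfrak{m}=\mathfrak{k}^{\perp}$, the orthogonal complement with respect to $-B$, which is precisely the horizontal distribution of $\pi:G\to G/K$ at the origin. Under the identification $T_oM\cong\mathfrak{m}$, the Riemannian submersion condition forces $\pi_*(-B)$ to equal the restriction $-B|_{\mathfrak{m}}$, and by bi-invariance this restriction is $Ad^{G/K}$-invariant; this is consistent with the earlier proposition that submersion metrics of bi-invariant metrics are invariant. Applying Theorem~\ref{Submersions} then yields that $(M,-B|_{\mathfrak{m}},J)$ is dual-Nakano semi-positive.

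To obtain the equivalent root-theoretic formulation, I would verify that $-B|_{\mathfrak{m}}$ corresponds to the parameters $\lambda_\alpha=1$. By definition $\lambda_\alpha=\lambda(X_\alpha,\overline{X_\alpha})$, so for $\lambda=-B$ one has $\lambda_\alpha=-B(X_\alpha,\overline{X_\alpha})$. Using the conjugation with respect to the compact real form, which sends $X_\alpha$ to $-X_{-\alpha}$, together with the normalization $B(X_\alpha,X_{-\alpha})=1$ from Proposition~\ref{Liealgebras}, I get $\lambda_\alpha=-B(X_\alpha,-X_{-\alpha})=B(X_\alpha,X_{-\alpha})=1$ for every $\alpha\in R_M$.

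The argument is essentially an application of a cited result, so I do not expect a genuine obstacle; the only points requiring care are the bookkeeping identifying $\pi_*(-B)$ with $-B|_{\mathfrak{m}}$ (checking that the horizontal space is $\mathfrak{m}$ and that the submersion metric restricts correctly) and the sign conventions for the conjugation $\overline{X_\alpha}$ that make the normalization $\lambda_\alpha=1$ come out positive. Once these are settled, dual-Nakano semi-positivity follows immediately from Theorem~\ref{Submersions}.
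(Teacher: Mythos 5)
Your proposal is correct and takes essentially the same route as the paper: the paper's own argument is exactly to apply Theorem~\ref{Submersions} to the unique (up to scale) bi-invariant metric $-B$ on the compact semisimple group $G$. You merely make explicit the bookkeeping the paper leaves implicit, namely the identification of the horizontal space with $\mathfrak{m}=\mathfrak{k}^{\perp}$, the equality $\pi_*(-B)=-B|_{\mathfrak{m}}$, and the normalization $\lambda_\alpha=-B(X_\alpha,\overline{X_\alpha})=B(X_\alpha,X_{-\alpha})=1$.
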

Next, we want to classify in which flag manifolds the metric given by $\lambda_\alpha = 1$, for all $\alpha \in R_M$, is Kähler.\\
Due to Proposition \ref{kahler}, if there are roots $\alpha, \beta$ and $ \alpha+\beta \in R_M$ such that $\lambda_\alpha = \lambda_\beta =1$, then setting $\lambda_{\alpha+\beta}=1$ does not satisfy the Kähler condition. Therefore, this metric is Kähler if, and only if, there are no roots on $R_M$ that can be summed up to a root on $R_M$, which happens if, and only if, $\Pi_M$ consist of only one root of mark one. A list of all such spaces is given by the table below.

\begin{table}[htbp]
  \centering
  \caption{Flag manifolds with a single isotropy summand}
  \label{tab:flag-manifolds}
  % Two columns, each 45% of text width, centered horizontally and vertically
  
  \begin{tabular}{|
      >{\centering\arraybackslash}m{0.45\linewidth}
      | >{\centering\arraybackslash}m{0.45\linewidth}
    |}
    \hline
    \textbf{$G/K$}
      & \textbf{Painted Dynkin diagram} \\
    \hline
\hline
    $\dfrac{SU(n)}{S\bigl(U(n-k)\times U(k)\bigr)}$
      &
      \adjustbox{valign=c}{%
        \begin{dynkinDiagram}[scale=1.5, mark=*]A{}
          \dynkinRootMark{o}3
        \end{dynkinDiagram}%
      } \\
    \hline

    $\dfrac{SO(2n+1)}{SO(2n-1)\times U(1)}$
      &
      \adjustbox{valign=c}{%
        \begin{dynkinDiagram}[scale=1.5, mark=*]B{}
          \dynkinRootMark{o}1
        \end{dynkinDiagram}%
      } \\
    \hline

    $\dfrac{Sp(n)}{U(n)}$
      &
      \adjustbox{valign=c}{%
        \begin{dynkinDiagram}[scale=1.5, mark=*]C{}
          \dynkinRootMark{o}5
        \end{dynkinDiagram}%
      } \\
    \hline

    $\dfrac{SO(2n)}{U(n)}$
      &
      \adjustbox{valign=c}{%
        \begin{dynkinDiagram}[scale=1.5, mark=*]D{}
          \dynkinRootMark{o}5
        \end{dynkinDiagram}%
      } \\
    \hline

    $\dfrac{SO(2n)}{SO(2n-2)\times U(1)}$
      &
      \adjustbox{valign=c}{%
        \begin{dynkinDiagram}[scale=1.5, mark=*]D{}
          \dynkinRootMark{o}1
        \end{dynkinDiagram}%
      } \\
    \hline

    $\dfrac{E_6}{SO(10)\times U(1)}$
      &
      \adjustbox{valign=c}{%
        \begin{dynkinDiagram}[scale=1.5, mark=*]E{6}
          \dynkinRootMark{o}6
        \end{dynkinDiagram}%
      } \\
    \hline

    $\dfrac{E_7}{E_6 \times U(1)}$
      &
      \adjustbox{valign=c}{%
        \begin{dynkinDiagram}[scale=1.5, mark=*]E{7}
          \dynkinRootMark{o}7
        \end{dynkinDiagram}%
      } \\
    \hline
  \end{tabular}
\end{table}

\begin{remark}
    Let $\gamma$ be a maximal root of a semisimple Lie algebra  $\mathfrak{g}$, given a system of simple roots $\{E_i\}$, one can write
    \begin{equation}
        \gamma=a_1 E_1 +...+a_n E_n
    \end{equation}
    for some $a_i >0$ and we call the number $a_i$ the mark of $E_i$.
\end{remark}
Turns out that these are precisely the compact Hermitian symmetric spaces, and more surprisingly they cover all Griffiths semi-positive Kähler metrics on flag manifolds. We have the generalized Frankel's conjecture proved by \cite{mok1988uniformization} that states the following:

\begin{theorem}\label{Mok}
    Let $(X,g)$ be a compact Kähler manifold with Griffiths semi-positive curvature, let $(\widetilde{X},\widetilde{g})$ be its universal covering space, then $(\widetilde{X},\widetilde{g})$ is isometrically biholomorphic to
    \begin{equation}
        (\mathbb{C}^k , g_0 ) \times (\mathbb{C}P^{N_1},\theta_1 ) \times ...\times (\mathbb{C}P^{N_l},\theta_l ) \times (M_1 , g_1 ) \times ... \times (M_p , g_p )
    \end{equation}
for some $p,k, N_1 ,..., N_l \in \mathbb{N}$, where $g_0$ is the Euclidean metric on $\mathbb{C}^k$, $\theta_i$ are Kähler metrics on $\mathbb{C}P^{N_i}$ with Griffiths semi-positive curvature, $M_i$ are compact Hermitian symmetric spaces and $g_i$ are its canonical metrics.
\end{theorem}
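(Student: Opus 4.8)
The statement is the resolution of the generalized Frankel conjecture, and since on a Kähler manifold the Chern connection coincides with the Levi--Civita connection, the hypothesis of Griffiths semi-positivity is exactly nonnegativity of the holomorphic bisectional curvature, $R(X,\overline X,Y,\overline Y)\ge 0$. The plan is to run the Kähler--Ricci flow and combine its regularizing effect with the holonomy/de Rham machinery. Concretely, I would start the (unnormalized) Kähler--Ricci flow $\partial_t g = -\mathrm{Ric}(g)$ from $g$; the first essential input is the maximum principle of Bando and Mok, which guarantees that nonnegativity of the holomorphic bisectional curvature is preserved for all $t$ for which the flow exists. This produces a one-parameter family $g(t)$ of Kähler metrics on $X$, each with nonnegative bisectional curvature, that I can exploit at positive times where the curvature operator is better behaved.

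Second, I would isolate the flat factor. Nonnegative bisectional curvature forces nonnegative Ricci curvature, so by the Cheeger--Gromoll splitting theorem the universal cover $\widetilde X$ splits isometrically and holomorphically as $(\mathbb{C}^k,g_0)\times X'$, where $X'$ is compact, simply connected, and still carries a Kähler metric of nonnegative bisectional curvature (the Euclidean de Rham factor absorbs all the non-compact directions, which is what forces $X'$ to be compact). Applying the de Rham decomposition to $X'$ then yields a finite product $X' = X_1\times\cdots\times X_m$ of irreducible simply connected Kähler manifolds, each inheriting nonnegative bisectional curvature, since restricting $R(X,\overline X,Y,\overline Y)$ to vectors tangent to a single factor recovers the intrinsic bisectional curvature of that factor.

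Third, for each irreducible factor $X_i$ I would run a dichotomy governed by Berger's holonomy classification. Either $X_i$ is a Hermitian symmetric space, in which case holonomy rigidity pins its metric down to the canonical one and it appears as a factor $(M_j,g_j)$; or the restricted holonomy is the full unitary group, and I claim that the bisectional curvature is then strictly positive somewhere. In the latter case I would invoke Mok's strong maximum principle for the curvature evolution along the flow: nonnegativity together with positivity at a single point forces the bisectional curvature of $g(t)$ to become strictly positive everywhere for every small $t>0$, for otherwise the null directions of the curvature operator would generate a nontrivial parallel distribution, contradicting irreducibility and full holonomy. Once a factor carries a Kähler metric of strictly positive bisectional curvature, the Frankel conjecture---proved by Mori and by Siu--Yau---identifies it biholomorphically with $\mathbb{C}P^{N_i}$; note that the original metric $\theta_i$ on such a factor need only be semi-positive, which is why the statement records an unspecified $\theta_i$ rather than the Fubini--Study metric.

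The main obstacle is the third step, and specifically the strong-maximum-principle analysis of the kernel of the curvature operator. The delicate point is to show that the set of zero-bisectional-curvature directions is either empty (after a short time of flow) or forms a smooth parallel distribution that integrates to a de Rham and holomorphic factor; this is precisely Mok's refinement of Hamilton's tensor maximum principle, and essentially all of the analytic difficulty lives here. The surrounding ingredients---the Bando--Mok preservation of nonnegativity, the Cheeger--Gromoll and de Rham splittings, and the Mori/Siu--Yau resolution of the strictly positive case---are substantial but can legitimately be cited as black boxes; it is the interaction between the flow and the holonomy, which sorts the factors into the flat, projective, and symmetric pieces, that constitutes the heart of the argument.
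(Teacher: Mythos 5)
The paper gives no proof of this statement at all---it is imported verbatim as Mok's generalized Frankel conjecture, with \cite{mok1988uniformization} cited as the source---so the only meaningful comparison is with Mok's own argument, and your sketch reproduces that argument faithfully: the K\"ahler--Ricci flow together with the Bando--Mok preservation of nonnegative holomorphic bisectional curvature, the Cheeger--Gromoll and de Rham splittings to extract the flat factor and the irreducible pieces, the Berger holonomy dichotomy, and Mori/Siu--Yau for the strictly positive factors, including the correct observation that the metrics $\theta_i$ on the projective factors need not be Fubini--Study. The one imprecision is in your third step: the zero set of the bisectional curvature is not a linear ``parallel distribution'' (the condition $R(X,\overline{X},Y,\overline{Y})=0$ cuts out a cone, not a subspace), but rather a holonomy-invariant subvariety of the projectivized tangent bundle, and handling this is precisely Mok's algebro-geometric refinement of Hamilton's strong maximum principle---the point you correctly identify as carrying all of the analytic difficulty.
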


Applying this theorem for compact and simply connected spaces, we conclude:
\begin{corollary}\label{FlagsMok}
    Let $(X,g)$ a compact, simply connected Kähler manifold with Griffiths semi-positive curvature then it is isometrically biholomorphic to
\begin{equation*}
    (\mathbb{C}P^{N_1},\theta_1 ) \times ... \times (\mathbb{C}P^{N_l},\theta_l ) \times (M_1 , g_1 ) \times ... \times (M_p , g_p ).
\end{equation*}
    In particular $X$ is a flag manifold.
\end{corollary}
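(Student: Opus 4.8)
The plan is to obtain the corollary as a direct consequence of Theorem~\ref{Mok}, using the two additional hypotheses—compactness and simple connectedness—to strip the general decomposition of the universal cover down to a product with no flat factor, and then to recognize that product as a flag manifold.

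First I would apply Theorem~\ref{Mok} to $(X,g)$; this is legitimate, since $X$ is a compact Kähler manifold with Griffiths semi-positive curvature. It furnishes an isometric biholomorphism from the universal cover $(\widetilde{X},\widetilde{g})$ onto
\[
(\mathbb{C}^k , g_0 ) \times (\mathbb{C}P^{N_1},\theta_1 ) \times \dots \times (\mathbb{C}P^{N_l},\theta_l ) \times (M_1 , g_1 ) \times \dots \times (M_p , g_p ).
\]
Because $X$ is simply connected, the covering projection $\widetilde{X}\to X$ is itself a biholomorphic isometry, so $X$ is isometrically biholomorphic to the product above.

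Next I would discard the Euclidean factor. A Riemannian product is compact precisely when each factor is compact, and $(\mathbb{C}^k,g_0)$ is non-compact as soon as $k\geq 1$; hence compactness of $X$ forces $k=0$, leaving exactly the stated decomposition. For the final assertion I would treat each factor separately and then reassemble: the projective space $\mathbb{C}P^{N_i}$ is the flag manifold $SU(N_i+1)/S(U(N_i)\times U(1))$, and each compact Hermitian symmetric space $M_j$ is a flag manifold—indeed, these are exactly the spaces listed in Table~\ref{tab:flag-manifolds}. It then remains to observe that a finite product of flag manifolds is again a flag manifold: if each factor is $G_a/C(T_a)$, then the product equals $\bigl(\prod_a G_a\bigr)\big/\bigl(\prod_a C(T_a)\bigr)$, where $\prod_a G_a$ is compact semisimple and $\prod_a C(T_a)$ is the centralizer of the torus $\prod_a T_a$; this meets the definition of a flag manifold.

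The substantive work is entirely absorbed by Theorem~\ref{Mok}, so no hard estimate remains. The only point demanding care is that the conclusion that \emph{$X$ is a flag manifold} concerns the underlying complex homogeneous structure rather than the metric: the metrics $\theta_i$ on the projective factors need not be invariant, but each such factor is nonetheless biholomorphic to the standard $\mathbb{C}P^{N_i}$, so its realization as a flag manifold is unaffected. I expect this bookkeeping—matching the complex structures on the factors with their homogeneous models and verifying that the product construction preserves both the biholomorphism type and the semisimple-group/centralizer-of-a-torus structure—to be the only mild obstacle.
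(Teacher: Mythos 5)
Your proposal is correct and follows essentially the same route as the paper, which obtains the corollary by applying Theorem~\ref{Mok} and using simple connectedness to identify $X$ with its universal cover and compactness to force $k=0$. The only difference is that you spell out the details the paper leaves implicit---in particular the verification that a product of flag manifolds (projective spaces and compact Hermitian symmetric spaces) is again a flag manifold via the product of the semisimple groups and of the torus centralizers---which is a welcome but routine elaboration.
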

\begin{remark}
Note that on the above corollary, although $M$ is a flag manifold, the metric $g$ might not be invariant.
\end{remark}

The natural follow up is to look for Griffiths semi-positive metrics on \(\mathbb{C}P^n\). The unique \(SU(n)\)-invariant complex structure on
\[
  \mathbb{C}P^n \;=\; \dfrac{SU(n)}{S( U(n-1) \times U(1))}
\]
admits the Fubini–Study metric, which is precisely the unique \(SU(n)\)-invariant Kähler metric on \(\mathbb{C}P^n\) appearing in Table \ref{tab:flag-manifolds}.

One could look for metrics on $\mathbb{C}P^{n}$ invariant by the action of another group. The only other realization of $\mathbb{C}P^{n}$ as a homogeneous space is by the action of $Sp(n)$, see \cite{ziller1982homogeneous},
\begin{equation}
    \mathbb{C}P^{2n-1} = \frac{Sp(n)}{Sp(n-1) \times U(1)}.
\end{equation}
This flag manifold has a unique $Sp(n)$-invariant complex structure $(+,+)$ and a unique $Sp(n)$-invariant Kähler metric $(1,2)$. But, this metric is the same as the Fubini-Study metric because $Sp(n+1) \subset SU(2n+2)$.

This leads us to the full classification of invariant Kähler metrics with Griffiths semi-positive curvature on flag manifolds.

\begin{corollary}
    Let $(X,g)$ a compact, simply connected Kähler manifold with Griffiths semi-positive curvature. If $g$ is invariant by the action of a transitive group then, it is isometrically biholomorphic to
\begin{equation*}
    (M_1 , g_1 ) \times ... \times (M_p , g_p ).
\end{equation*}
\end{corollary}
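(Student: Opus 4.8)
The plan is to bootstrap from Corollary \ref{FlagsMok}, which already decomposes any compact simply connected Griffiths semi-positive Kähler manifold as a product of projective spaces carrying possibly non-canonical Griffiths semi-positive Kähler metrics $\theta_i$ together with irreducible Hermitian symmetric spaces equipped with their canonical metrics. The only additional input available here is invariance under a transitive group, and the whole task is to use this hypothesis to pin down each $\theta_i$ as the Fubini--Study metric, thereby converting every projective factor into a Hermitian symmetric factor.

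First I would check that the product produced by Corollary \ref{FlagsMok} is in fact the de Rham decomposition of $X$. Each $(\mathbb{C}P^{N_i},\theta_i)$ is simply connected and Kähler with $b_2=1$, so it cannot split as a nontrivial Riemannian product (a product of positive-dimensional simply connected Kähler manifolds has $b_2\ge 2$ by the Künneth formula); hence it is de Rham irreducible. Refining the Hermitian symmetric factors into irreducibles if necessary, the decomposition becomes a product of irreducible simply connected factors, which is the de Rham decomposition. I may also assume the transitive group $G$ is connected, since its identity component still acts transitively on the connected manifold $X$.

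Next I would invoke the rigidity of the de Rham decomposition: every isometry permutes the irreducible factors, and a connected group of isometries cannot realize a nontrivial permutation, so $G$ preserves each factor individually. The projection onto a given factor is then $G$-equivariant, and since $G\cdot x = X$ its image is the entire factor; thus $G$ acts transitively by isometries on each $(\mathbb{C}P^{N_i},\theta_i)$. In particular each $\theta_i$ is an invariant Kähler metric, and because the curvature tensor of a Kähler product is block-diagonal, each factor inherits Griffiths semi-positivity from $X$.

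Finally I would apply the classification recalled in the text: by \cite{ziller1982homogeneous} the only homogeneous realizations of $\mathbb{C}P^{N_i}$ are $SU(N_i+1)/S(U(N_i)\times U(1))$ and, for $N_i$ odd, $Sp(\cdot)/Sp(\cdot)\times U(1)$, and in both cases the computations of the previous section show that the unique invariant Kähler metric is the Fubini--Study metric. Hence every $\theta_i$ equals Fubini--Study, which exhibits $(\mathbb{C}P^{N_i},\theta_i)$ as a compact irreducible Hermitian symmetric space with its canonical metric; absorbing these into the list $(M_1,g_1),\dots,(M_p,g_p)$ yields the desired product of Hermitian symmetric spaces. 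I expect the main obstacle to be the descent step, namely justifying rigorously that transitivity of $G$ on $X$ forces transitivity on each factor; this rests on the uniqueness of the de Rham decomposition together with the irreducibility of the projective factors established above. Once each $\theta_i$ is known to be invariant, the conclusion follows from the already-established uniqueness of invariant Kähler metrics on $\mathbb{C}P^n$.
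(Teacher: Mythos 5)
Your proposal is correct and takes essentially the same route as the paper: Mok's decomposition via Corollary \ref{FlagsMok}, followed by Ziller's classification of the homogeneous realizations of $\mathbb{C}P^{N}$ and the uniqueness of the invariant K\"ahler metric (Fubini--Study) in both the $SU$ and $Sp$ models, so that every projective factor is absorbed into the Hermitian symmetric ones. Your de Rham rigidity argument --- showing that a connected transitive isometry group preserves each irreducible factor, hence acts transitively on it and forces each $\theta_i$ to be invariant --- makes rigorous a descent step that the paper leaves implicit, and is a welcome strengthening rather than a different approach.
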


\section{Curvature positivity on quasi-Kähler metrics}

 The curvature positivity condition on Kähler flag manifolds was extremely restrictive. Only appearing on flag manifolds with one isotropic summand, where the underlying algebraic structure is particularly simple. If we require non-integrability of the almost-complex structures, we exclude all the simpler flag manifolds, making the situation seems even more restrictive.
 In this section, we correlate the algebraic structure of the root system of a flag manifold with the curvature positivity, and we use this to prove neither maximal flag manifolds nor flag manifolds associated with the classical Simple Lie groups admit quasi-Kähler metrics of semi-positive curvature on a non-integrable almost-complex structure. We also show that no flag manifold associated with $G_2$ admits such structures, and we conjecture that in fact no quasi-Kähler flag manifolds admits such a metric.\\

We will use Theorem \ref{ChernT} to compute an expression for the Chern connection and the curvature tensor for flag manifolds. Let $G/K$ be a flag manifold endowed with an almost-complex structure $J=\{\epsilon_\alpha \}$ and an invariant almost-Hermitian metric $\lambda= \{\lambda_\alpha \}$. Let $R_M$ be the set of roots on $G/K$. Given $\alpha, \beta \in R_M$, we want to compute the Chern connection $\nabla_{X_\alpha} X_\beta$. Remember that when we write $ \nabla_{X_\alpha}X_{\beta}$, we mean $(\nabla_{X^*_\alpha}X^*_\beta)_{eK}$, where $e$ is the identity element in $G$ and $X_{\alpha}^*$ is the Killing vector field generated by $X_\alpha$. From equation \ref{ChernE} we get

    \begin{equation}
\begin{aligned}
& \lambda(\nabla_{X_{\alpha}} {X_{\beta}}, {X_{\gamma}}) = \frac{1}{2}{X_{\alpha}} \lambda({X_{\gamma}},{X_{\beta}}) +\frac{1}{2} J{X_{\alpha}} \lambda({X_{\gamma}},J{X_{\beta}})\\
&-\frac{1}{4}\lambda( [{X_{\alpha}},{X_{\gamma}}] +[J{X_{\alpha}},J{X_{\gamma}}] + J[J{X_{\alpha}},{X_{\gamma}}] -J[{X_{\alpha}},J{X_{\gamma}}],{X_{\beta}})\\
&+\frac{1}{4} \lambda([{X_{\alpha}},{X_{\beta}}]+ [J{X_{\alpha}},J{X_{\beta}}] +J[J{X_{\alpha}},{X_{\beta}}] - J[{X_{\alpha}},J{X_{\beta}}],{X_{\gamma}}).
\end{aligned}    
    \end{equation}
The sign change in the last two terms happens because $[X_{\alpha}^*,X_{\beta}^*] = -[X_{\alpha},X_{\beta}]^*$. And, since we are dealing with Killing vector fields, the first two terms are null. Hence,
    \begin{equation}
\begin{aligned}
& \lambda(\nabla_{X_{\alpha}} {X_{\beta}}, {X_{\gamma}}) =\\
&-\frac{1}{4}m_{\alpha,\gamma}(1-\epsilon_\alpha \epsilon_\gamma -\epsilon_{\alpha+\gamma}\epsilon_\alpha + \epsilon_{\alpha+\gamma}\epsilon_\gamma )\lambda(X_{\alpha+\gamma},X_\beta )\\
&+\frac{1}{4}m_{\alpha,\beta} (1  - \epsilon_\alpha \epsilon_\beta -\epsilon_{\alpha+\beta} \epsilon_\alpha +\epsilon_{\alpha+\beta} \epsilon_\beta )\lambda(X_{\alpha+\beta}, X_\gamma).
\end{aligned}    
    \end{equation}
By using that $\lambda(X_{\alpha}, X_\gamma) = -\lambda_\gamma$  if $\gamma=-\alpha$ and $\lambda(X_{\alpha}, X_\gamma)=0$ otherwise, we conclude that, if $\gamma= -\alpha - \beta$  then,
   \begin{equation}
\begin{aligned}
& \lambda(\nabla_{X_{\alpha}} {X_{\beta}}, {X_{\gamma}}) =\\
&+\frac{1}{4}m_{\alpha,\gamma}(1-\epsilon_\alpha \epsilon_\gamma -\epsilon_{\alpha+\gamma}\epsilon_\alpha + \epsilon_{\alpha+\gamma}\epsilon_\gamma )\lambda_{\alpha+\gamma}\\
&-\frac{1}{4}m_{\alpha,\beta} (1  - \epsilon_\alpha \epsilon_\beta -\epsilon_{\alpha+\beta} \epsilon_\alpha +\epsilon_{\alpha+\beta} \epsilon_\beta )\lambda_{\alpha+\beta},
\end{aligned}    
    \end{equation}
and $\lambda(\nabla_{X_{\alpha}} {X_{\beta}}, {X_{\gamma}}) =0$ otherwise. Finally, by using that $m_{\alpha,-\alpha-\beta}= -m_{\alpha,\beta}$ we get
   \begin{eqnarray*}
 \nabla_{X_{\alpha}} {X_{\beta}} &=&
\frac{1}{4\lambda_{\alpha+\beta}}m_{\alpha,\beta} \Big(\lambda_{\beta}(1 +\epsilon_\alpha \epsilon_{\alpha+\beta} + \epsilon_\alpha \epsilon_\beta +\epsilon_\beta \epsilon_{\alpha+\beta})\\ &+&\lambda_{\alpha+\beta} (1 -\epsilon_\alpha \epsilon_\beta - \epsilon_\alpha \epsilon_{\alpha+\beta} +\epsilon_\beta \epsilon_{\alpha+\beta})\Big)X_{\alpha+\beta}.
    \end{eqnarray*}

Now we can compute the curvature tensor $R(X_\alpha , X_\beta, X_\gamma , X_\delta )$. Note that if $\delta \neq -\alpha - \beta-\gamma$, then $R(X_\alpha , X_\beta, X_\gamma , X_\delta )=0$. We will not need the full expression of the curvature on flag manifolds, but we will compute two particular cases.

Remember that for reductive homogeneous spaces, the curvature formula satisfies the following (\cite{kobayashi1996foundations}, Chapter X Proposition 2.3), for every\\ $X,Y,Z \in \mathfrak{m}$,
\begin{equation}
         R(X,Y,Z) = \nabla_X \nabla_Y Z - \nabla_Y \nabla_X Z - \nabla_{[X,Y]_\mathfrak{m}}Z - [[X,Y]_\mathfrak{k},Z].
\end{equation}

\begin{proposition}\label{Curvatura}
    Let $G/K$ be a flag manifold, $J$ an almost-complex structure and $\lambda$ an almost-Hermitian metric. Given $\alpha,\gamma \in R_{M}^{+}$, the curvature tensor $R$ of the Chern connection satisfies the following two identities.
\begin{equation}\label{Curvatura1}
    \begin{aligned}
       & R(X_\alpha , X_{-\alpha}, X_\gamma , X_{-\gamma}) =  \Big(  -\frac{\lambda_{\gamma-\alpha}}{ \lambda_{\gamma}} \tilde{m}_{\alpha , -\gamma}^2  \delta_{\epsilon_{ \gamma-\alpha}}^{1} +\\ &\frac{\lambda_\gamma }{ \lambda_{\alpha +\gamma}} \tilde{m}_{\alpha , \gamma}^2\delta_{\epsilon_{\alpha+\gamma}}^{1}
    -m_{\alpha,\gamma}^2 + m_{\alpha,-\gamma}^2 \Big)\lambda_\gamma 
    \end{aligned}
\end{equation}
and
    \begin{equation}
        \begin{aligned}
            R(X_\gamma , X_{-\alpha}, X_\alpha , X_{-\gamma} ) = -\frac{\lambda_\alpha \lambda_\gamma}{\lambda_{\alpha+\gamma}} m_{\alpha,\gamma}^2 \delta_{\epsilon_{\alpha+\gamma}}^{1} + m_{\alpha , -\gamma}^2 \xi_{\gamma,-\alpha},
        \end{aligned}
    \end{equation}
where
\begin{equation}
    \tilde{m}_{\alpha,\gamma} = 
    \begin{cases}
        &m_{\alpha,\gamma}, \ \ \text{if} \ \ \alpha+\gamma \in R_M\\
        &0, \ \ \text{otherwise}
    \end{cases}
\end{equation}
and
\begin{equation}
    \xi_{\gamma,-\alpha}=
      \begin{cases}
        &\lambda_\gamma \delta_{\epsilon_{\gamma-\alpha}}^{-1} + \lambda_\alpha \delta_{\epsilon_{\gamma-\alpha}}^1, \ \ \text{if} \ \ \gamma-\alpha \in R_M\\
        &\lambda_\gamma, \ \ \text{otherwise}.
    \end{cases}
\end{equation}
    
\end{proposition}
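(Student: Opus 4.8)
The plan is to compute both curvature components directly from the reductive curvature formula $R(X,Y,Z) = \nabla_X \nabla_Y Z - \nabla_Y \nabla_X Z - \nabla_{[X,Y]_\mathfrak{m}}Z - [[X,Y]_\mathfrak{k},Z]$ recalled above, feeding in the explicit expression for $\nabla_{X_\alpha}X_\beta$ just derived. Writing $\nabla_{X_\alpha}X_\beta = c(\alpha,\beta)\,X_{\alpha+\beta}$, the first and most important step is to record how the coefficient $c(\alpha,\beta)$ collapses once the $\epsilon$-values are pinned down. With $\alpha,\gamma \in R_M^+$ (so $\epsilon_\alpha = \epsilon_\gamma = 1$), a short sign count inside the two four-term brackets shows, for instance, that $c(\alpha,\gamma) = \frac{\lambda_\gamma}{\lambda_{\alpha+\gamma}} m_{\alpha,\gamma}\,\delta_{\epsilon_{\alpha+\gamma}}^{1}$ and $c(-\alpha,\gamma) = m_{-\alpha,\gamma}\,\delta_{\epsilon_{\gamma-\alpha}}^{1}$, and similarly for the handful of other combinations needed. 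These reductions are precisely where the Kronecker deltas $\delta_{\epsilon_\bullet}^{1}$ of the statement originate: a summand survives only when the relevant root sum again lies in $R_M^+$.

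Next I would assemble $\nabla_{X_\alpha}\nabla_{X_{-\alpha}}X_\gamma$ and $\nabla_{X_{-\alpha}}\nabla_{X_\alpha}X_\gamma$ from these pieces, noting that the intermediate vector must lie in $\mathfrak{m}$, which is exactly what forces $m$ to be replaced by the truncated $\tilde m$ in the first two terms of (\ref{Curvatura1}). The products of structure constants produced this way, such as $m_{-\alpha,\gamma}m_{\alpha,\gamma-\alpha}$ and $m_{\alpha,\gamma}m_{-\alpha,\alpha+\gamma}$, are then rewritten as the squares $m_{\alpha,-\gamma}^2$ and $m_{\alpha,\gamma}^2$ using the antisymmetry and cyclic relations (a)--(c) of Proposition \ref{Liealgebras}. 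Since $[X_\alpha,X_{-\alpha}]=H_\alpha \in \mathfrak{t}\subset\mathfrak{k}$, the term $\nabla_{[X_\alpha,X_{-\alpha}]_\mathfrak{m}}X_\gamma$ vanishes, while $[[X_\alpha,X_{-\alpha}]_\mathfrak{k},X_\gamma]=\gamma(H_\alpha)X_\gamma$. This last scalar I would handle by the Jacobi identity: expanding $[H_\alpha,X_\gamma]=[X_\alpha,[X_{-\alpha},X_\gamma]]-[X_{-\alpha},[X_\alpha,X_\gamma]]$ and again applying (a)--(c) yields $\gamma(H_\alpha)=m_{\alpha,-\gamma}^2-m_{\alpha,\gamma}^2$, which produces exactly the two untruncated terms of (\ref{Curvatura1}). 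Pairing everything against $X_{-\gamma}$ via $\lambda(X_\gamma,X_{-\gamma})=-\lambda_\gamma$ and factoring out $\lambda_\gamma$ then gives the first identity.

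For the second identity the same scheme applies to $R(X_\gamma,X_{-\alpha},X_\alpha,X_{-\gamma})$. Here $\nabla_{X_{-\alpha}}X_\alpha=0$ since $-\alpha+\alpha=0\notin R_M$, so the first double-covariant term drops out, and $-\nabla_{X_{-\alpha}}\nabla_{X_\gamma}X_\alpha$ supplies the term $-\frac{\lambda_\alpha\lambda_\gamma}{\lambda_{\alpha+\gamma}}m_{\alpha,\gamma}^2\,\delta_{\epsilon_{\alpha+\gamma}}^{1}$. The bracket $[X_\gamma,X_{-\alpha}]=m_{\gamma,-\alpha}X_{\gamma-\alpha}$ now genuinely sees the splitting $\mathfrak{g}=\mathfrak{k}\oplus\mathfrak{m}$: when $\gamma-\alpha\in R_M$ it feeds the $\nabla_{[\cdot]_\mathfrak{m}}X_\alpha$ term, whereas when $\gamma-\alpha\in R_K$ it feeds the $[[\cdot]_\mathfrak{k},X_\alpha]$ term. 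After simplifying $c(\gamma-\alpha,\alpha)$ and reducing the resulting constants to $m_{\alpha,-\gamma}^2$, these two cases reproduce exactly the two branches of the piecewise factor $\xi_{\gamma,-\alpha}$.

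I expect the main obstacle to be disciplined bookkeeping rather than any single hard idea: keeping the $\epsilon$-sign patterns under control so that the bracket reductions are airtight, and, more delicately, correctly distinguishing roots that sum into $R_M$ from those that sum into $R_K$. This last point is what separates $\tilde m$ from $m$ in the first identity and what generates the case split in $\xi$ for the second; conflating $R$ with $R_M$ at any stage would corrupt the result. A minor additional care is needed for the degenerate configurations $\gamma=\pm\alpha$, which I would verify separately to confirm consistency with the stated formulas.
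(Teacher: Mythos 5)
Your proposal is correct and follows essentially the same route as the paper: the paper presents this proposition as a direct computation from the Chern connection coefficients $\nabla_{X_\alpha}X_\beta$ it has just derived together with the reductive-homogeneous curvature formula, which is exactly what you carry out, and your sign counts (e.g.\ $c(\alpha,\gamma)=\frac{\lambda_\gamma}{\lambda_{\alpha+\gamma}}m_{\alpha,\gamma}\delta^1_{\epsilon_{\alpha+\gamma}}$), your structure-constant reductions via relations (a)--(c), the Jacobi-identity evaluation $\gamma(H_\alpha)=m_{\alpha,-\gamma}^2-m_{\alpha,\gamma}^2$, and the $R_M$-versus-$R_K$ case analysis producing $\tilde m$ and the two branches of $\xi$ all check out against the stated identities. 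Your closing caution about the degenerate configuration is warranted: the formula as stated applies for $\gamma\neq\alpha$ (for $\gamma=\alpha$ the isotropy term contributes $B(H_\alpha,H_\alpha)\lambda_\alpha$ rather than $0$, consistent with the paper's later $\mathbb{C}P^{2n-1}$ computations), which matches how the paper actually uses the proposition.
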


Note, on the above proposition, that the term $-m_{\alpha,\gamma}^2$, in the first Equation, imposes a strong restriction on the positivity of the curvature. In fact, we have the following.

\begin{lemma}\label{lemma}
    Let $(M,J,\omega)$ be a quasi-Kähler flag manifold. If there exists $\alpha,\gamma\in R_{M}^{+}$ such that $\alpha+\gamma \in R_M$ and $\alpha-\gamma \not\in R$, then $(M,J,\omega)$ is not Griffiths semi-positive.
\end{lemma}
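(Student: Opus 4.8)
The plan is to contradict Griffiths semi-positivity directly, by evaluating the curvature on the pair of type $(1,0)$ vectors $u = X_\alpha$, $v = X_\gamma$ produced by the hypothesis. Since $T^{1,0}\mathfrak{m} = \bigoplus_{\delta \in R_M^+}\mathbb{C}X_\delta$ and the conjugate of $X_\delta$ is a multiple of $X_{-\delta}\in T^{0,1}\mathfrak m$, the Griffiths curvature in these directions is a positive multiple of $R(X_\alpha, X_{-\alpha}, X_\gamma, X_{-\gamma})$. Griffiths semi-positivity would force this quantity to be $\geq 0$, so it suffices to show that the stated hypotheses make it strictly negative.

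First I would substitute the hypotheses into the curvature identity \ref{Curvatura1}. Because a root system is symmetric, $\alpha - \gamma \notin R$ also gives $\gamma - \alpha \notin R$, so neither difference lies in $R_M$; hence $\tilde m_{\alpha,-\gamma} = 0$ and $m_{\alpha,-\gamma} = 0$, which kill the first and fourth terms of \ref{Curvatura1}. Using $\alpha + \gamma \in R_M$ to replace $\tilde m_{\alpha,\gamma}$ by $m_{\alpha,\gamma}$, the identity collapses to
\[
R(X_\alpha, X_{-\alpha}, X_\gamma, X_{-\gamma}) = m_{\alpha,\gamma}^2\,\lambda_\gamma\left(\frac{\lambda_\gamma}{\lambda_{\alpha+\gamma}}\,\delta_{\epsilon_{\alpha+\gamma}}^{1} - 1\right).
\]
I would then split on the value of $\epsilon_{\alpha+\gamma}$. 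If $\epsilon_{\alpha+\gamma} = -1$ the indicator $\delta_{\epsilon_{\alpha+\gamma}}^{1}$ vanishes and the bracket equals $-1$. If $\epsilon_{\alpha+\gamma} = 1$, then $\alpha$, $\gamma$ and $\alpha+\gamma$ all lie in $R_M^+$, so the quasi-Kähler relation (Proposition \ref{quasi--kahler}) gives $\lambda_{\alpha+\gamma} = \lambda_\alpha + \lambda_\gamma > \lambda_\gamma$ and hence $\lambda_\gamma/\lambda_{\alpha+\gamma} - 1 < 0$. In either case the bracket is strictly negative; combined with $\lambda_\gamma > 0$ and the standard fact that $\alpha + \gamma \in R$ forces the structure constant $m_{\alpha,\gamma}$ of Proposition \ref{Liealgebras} to be nonzero, this yields $R(X_\alpha, X_{-\alpha}, X_\gamma, X_{-\gamma}) < 0$, contradicting Griffiths semi-positivity.

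The crux, and the only place genuine content enters, is that the single positive contribution $\frac{\lambda_\gamma}{\lambda_{\alpha+\gamma}}m_{\alpha,\gamma}^2$ can never outweigh the structural term $-m_{\alpha,\gamma}^2$. This is guaranteed precisely by quasi-Kählerness, which in the only case where that contribution survives ($\alpha+\gamma \in R_M^+$) forces $\lambda_\gamma < \lambda_{\alpha+\gamma}$. The hypothesis $\alpha - \gamma \notin R$ plays the complementary role of removing the two terms ($-\frac{\lambda_{\gamma-\alpha}}{\lambda_\gamma}\tilde m_{\alpha,-\gamma}^2\delta_{\epsilon_{\gamma-\alpha}}^{1}$ and $+m_{\alpha,-\gamma}^2$) that could otherwise compensate the negative term, so it is exactly what makes the obstruction unavoidable. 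Beyond correctly bookkeeping the indicator $\delta_{\epsilon_{\alpha+\gamma}}^{1}$ across the two cases, I would not expect any serious difficulty.
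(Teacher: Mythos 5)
Your proposal is correct and follows essentially the same route as the paper: both specialize Equation \ref{Curvatura1} using $\alpha-\gamma\notin R$ to kill the $\tilde m_{\alpha,-\gamma}$ and $m_{\alpha,-\gamma}$ terms, then split on $\epsilon_{\alpha+\gamma}$, invoking the quasi-Kähler relation $\lambda_{\alpha+\gamma}=\lambda_\alpha+\lambda_\gamma$ exactly in the case $\epsilon_{\alpha+\gamma}=1$ to get strict negativity. Your explicit remarks that $m_{\alpha,\gamma}\neq 0$ whenever $\alpha+\gamma\in R$ and that $\overline{X_\alpha}$ is a multiple of $X_{-\alpha}$ are details the paper leaves implicit, but they do not change the argument.
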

\begin{proof}
It follows immediately from Equation \ref{Curvatura1} that
\begin{equation*}
    R(X_\alpha , X_{-\alpha}, X_\gamma, X_{-\gamma})=\Big( \frac{\lambda_\gamma }{ \lambda_{\alpha +\gamma}} m_{\alpha , \gamma}^2\delta_{\epsilon_{\alpha+\gamma}}^{1}
    -m_{\alpha,\gamma}^2 \Big)\lambda_\gamma.
\end{equation*}

If $\epsilon_{\alpha+\gamma}=-1$ then $\delta_{\epsilon_{\alpha+\gamma}}^{1}=0$ and $ R(X_\alpha , X_{-\alpha}, X_\gamma, X_{-\gamma})=-m_{\alpha,\gamma}^2 \lambda_\gamma  <0$.\\

Now, if $\epsilon_{\alpha+\gamma}=1$, due to the metric is quasi-Kähler, $\lambda_{\alpha+\gamma}=\lambda_\alpha +\lambda_\gamma$, and therefore, $\frac{\lambda_\gamma }{ \lambda_{\alpha +\gamma}}  <1$, which implies
\begin{equation}
      R(X_\alpha , X_{-\alpha}, X_\gamma, X_{-\gamma})=\Big( \frac{\lambda_\gamma }{ \lambda_{\alpha +\gamma}} m_{\alpha , \gamma}^2
    -m_{\alpha,\gamma}^2 \Big)\lambda_\gamma <0 
\end{equation}
and concludes the proof.
\end{proof}

Using this lemma we prove the next results about Griffiths semi-positive curvature.
\begin{theorem}\label{Height3}
    Let $M=G/K$ be a flag manifold, $J$ a non-integrable almost-complex structure and $\omega$ a quasi-Kähler metric. Suppose $(M,J,\omega)$ is Griffiths semi-positive, then $G$ has a simple root of mark at least $3$.
\end{theorem}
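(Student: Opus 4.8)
The plan is to feed the non-integrability of $J$ into the contrapositive of Lemma \ref{lemma}, used twice, so as to build a four-term root string, and then to read off the mark directly from the length of that string. The underlying principle, which is exactly the contrapositive of Lemma \ref{lemma}, is that Griffiths semi-positivity forces $\mu-\nu\in R$ for every pair $\mu,\nu\in R_M^+$ with $\mu+\nu\in R_M$.

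First I would extract a bad pair from non-integrability. By Proposition \ref{integrable}, since $J$ is not integrable there are roots $\alpha,\beta\in R_M^+$ with $\alpha+\beta\in R_M$ but $\alpha+\beta\notin R_M^+$; equivalently $\epsilon_{\alpha+\beta}=-1$, so that $-(\alpha+\beta)\in R_M^+$. Applying the contrapositive of Lemma \ref{lemma} to the pair $(\alpha,\beta)$ gives $\alpha-\beta\in R$ at once, so the $\beta$-string through $\alpha$ already contains $\alpha-\beta,\ \alpha,\ \alpha+\beta$.

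The decisive step is a second application of the lemma, to the rotated pair $\bigl(-(\alpha+\beta),\,\beta\bigr)$. Both entries lie in $R_M^+$ (the first precisely because $\epsilon_{\alpha+\beta}=-1$), and their sum is $-(\alpha+\beta)+\beta=-\alpha\in R_M$, so Lemma \ref{lemma} applies; its contrapositive forces $-(\alpha+\beta)-\beta=-(\alpha+2\beta)$ to be a root, that is $\alpha+2\beta\in R$. Therefore the $\beta$-string through $\alpha$ contains the four roots
\[
\alpha-\beta,\qquad \alpha,\qquad \alpha+\beta,\qquad \alpha+2\beta,
\]
and, root strings being unbroken chains of at most four elements, this is a string of length exactly four.

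To finish I would appeal to the structure theory of root systems: a four-term root string produces a Cartan integer equal to $\pm 3$, and this occurs only inside an irreducible component of type $G_2$. Since $\alpha+\beta\in R$, the roots $\alpha$ and $\beta$ share an irreducible component of $R$, which must then be $G_2$; as the highest root of $G_2$ has a simple root appearing with coefficient $3$, we conclude that $G$ has a simple root of mark $3$. I expect the main obstacle to be the middle paragraph: one must check carefully that the rotated pair meets all hypotheses of Lemma \ref{lemma} — this is exactly where non-integrability, via $\epsilon_{\alpha+\beta}=-1$, is indispensable — and that the four collinear roots are genuinely distinct and form a single string; the passage from a length-four string to a mark-$3$ simple root is then a standard, if slightly delicate, rank-two computation. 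Note that this argument only pins down one irreducible factor of $G$ as $G_2$, not $G$ itself, which is consistent with the possibility of further factors.
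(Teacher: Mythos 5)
Your proposal is correct, and its core coincides with the paper's own proof: both arguments extract from non-integrability (via Proposition~\ref{integrable}) a pair $\alpha,\beta\in R_M^+$ with $\alpha+\beta\in R_M$ and $\epsilon_{\alpha+\beta}=-1$, and both apply the contrapositive of Lemma~\ref{lemma} twice --- once to $(\alpha,\beta)$ to get $\alpha-\beta\in R$, and once to a rotated pair built from $-(\alpha+\beta)$ (the paper uses $(-\alpha-\beta,\alpha)$, yielding $2\alpha+\beta\in R$; you use $(-(\alpha+\beta),\beta)$, yielding $\alpha+2\beta\in R$; the two are mirror images). Where you genuinely diverge is the endgame. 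The paper stays elementary: choosing a simple system in which $\alpha$ and $\beta$ are both positive, it notes that $\alpha-\beta\in R$ forces a common simple root in their supports, and that simple root then appears with coefficient at least $3$ in the root $2\alpha+\beta$; since the coefficients of any positive root are bounded by the marks, this gives a simple root of mark at least $3$. You instead observe that $\alpha-\beta,\ \alpha,\ \alpha+\beta,\ \alpha+2\beta$ is an unbroken $\beta$-string of the maximal possible length four, so $\langle\alpha-\beta,\beta^\vee\rangle=-3$, and a Cartan integer $\pm3$ between non-proportional roots forces a squared-length ratio of $3$, hence the irreducible component of $R$ containing $\alpha$ and $\beta$ must be of type $G_2$. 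Both endgames are sound (your distinctness and non-proportionality checks go through because $2\alpha$ and $0$ are never roots), but yours invokes slightly heavier structure theory and buys a strictly stronger conclusion: not merely a mark-$3$ simple root somewhere in $\mathfrak{g}$, but an actual $G_2$ simple factor containing the non-integrability pair. That strengthening is worth pursuing: since invariant metrics, invariant almost-complex structures and the Chern curvature all split along the simple factors of $G$, your version combined with the paper's Proposition on $G_2$ flag manifolds would rule out non-integrable quasi-Kähler Griffiths semi-positive flag manifolds altogether, i.e., it would settle the non-integrable half of the paper's conjecture.
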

\begin{proof}
    Since $J$ is non-integrable, there are positive roots $\alpha,\gamma \in R_{M}^{+}$ such that $\alpha+\gamma \in R^{-}_M$. From Lemma
     \ref{lemma} we get that $\alpha-\gamma \in R$. \\
     Again using Lemma \ref{lemma} on the pair $-\alpha-\gamma$ and $\alpha$, which are both positive roots on $R_M$, whose sum lies on $R_M$, we get that $2\alpha+\gamma\in R$.\\
     To conclude the proof, choose a system of simple roots $\{
     E_i\}$ such that $\alpha$ and $\gamma$ are both positive roots in the Lie algebraic sense. Write $\alpha$ and $\gamma$ as a sum of these simple roots,
     
     \begin{equation}
     \begin{aligned}
          &\alpha= a_1  E_1 +...+a_nE_n , &\gamma=b_1 E_1 +...+b_n E_n
     \end{aligned}
     \end{equation}
      for some $a_i , b_i \geq 0$. Since $\alpha-\gamma\in R$ we have that, there is at least one simple root in common for $\alpha$ and $\gamma$ (i.e., there is a $i=1,...,n$ such that $a_i , b_i \neq 0$). As this is a common root on $\alpha$ and $\gamma$, then it appears at least three times on the root $2\alpha+\gamma$. 
\end{proof}
The Theorem above is very restrictive, i.e. none of the classical simple lie algebras $A_l , B_l , C_l$ and $D_l$ have a root of mark $3$ and therefore we have the following result.

\begin{corollary}
None flag manifold associated with $A_l , B_l , C_l$ and $D_l$ accepts a quasi-Kähler metric on a non-integrable almost-complex structure that is Griffiths semi-positive.    
\end{corollary}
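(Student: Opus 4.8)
The plan is to argue by contraposition and reduce the statement to a purely combinatorial fact about root systems, using Theorem \ref{Height3} as the sole substantive input. Suppose, aiming for a contradiction, that some flag manifold $M = G/K$ with $G$ of type $A_l$, $B_l$, $C_l$ or $D_l$ admitted a non-integrable almost-complex structure $J$ together with a quasi-Kähler metric $\omega$ whose Chern curvature is Griffiths semi-positive. Then Theorem \ref{Height3} would force the Lie algebra $\mathfrak{g}$ to possess a simple root of mark at least $3$, and the whole task becomes to show that this cannot happen in the classical types.

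First I would recall that the mark $a_i$ of a simple root $E_i$ is exactly its coefficient in the expansion of the highest root $\theta = \sum_i a_i E_i$, so the claim amounts to verifying that in each of the four classical families every such coefficient is at most $2$. I would establish this by writing the highest root explicitly in each case: for $A_l$ one has $\theta = E_1 + \cdots + E_l$, so all marks equal $1$; for $B_l$, $\theta = E_1 + 2E_2 + \cdots + 2E_l$; for $C_l$, $\theta = 2E_1 + \cdots + 2E_{l-1} + E_l$; and for $D_l$, $\theta = E_1 + 2E_2 + \cdots + 2E_{l-2} + E_{l-1} + E_l$. In every case the largest mark that occurs is $2$, so no simple root has mark $\geq 3$. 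This directly contradicts the conclusion drawn from Theorem \ref{Height3}, and the corollary follows.

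There is no genuine analytic obstacle here: the entire force of the result is already contained in Theorem \ref{Height3}, and what remains is elementary bookkeeping of the highest roots of the classical simple Lie algebras. The only point that requires any care is getting these coefficient lists right, which can be read off from any standard reference on semisimple Lie theory (and is consistent with the root-system descriptions of $A_l$ and $C_l$ given earlier in the paper); accordingly I would either cite such a reference or record the four highest roots above so that the reader can check the bound $a_i \leq 2$ by inspection.
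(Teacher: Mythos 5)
Your proposal is correct and follows exactly the paper's own route: the corollary is an immediate consequence of Theorem \ref{Height3} combined with the observation that no simple root of $A_l$, $B_l$, $C_l$ or $D_l$ has mark $\geq 3$, which the paper simply asserts and you verify by writing out the highest roots (all your coefficient lists are accurate). Your version is just a slightly more explicit write-up of the same argument.
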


\begin{theorem}\label{Maximal}
    Let $M$ be a maximal flag manifold different from $SU(2)/U(1)$. There are no almost-complex structure $J$ and quasi-Kähler metric $\omega$ such that $(M,J,\omega)$ has Griffiths semi-positive curvature.
\end{theorem}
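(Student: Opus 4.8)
The plan is to deduce the whole statement from Lemma~\ref{lemma}. Since a maximal flag has $\Pi_K=\emptyset$, here $R_M=R$ is the full root system. I may assume $G$ is simple of rank $\geq 2$: the rank-one simple case is exactly the excluded $SU(2)/U(1)$, and for a product the roots of one simple factor are closed under negation and addition, so the curvature computation localizes to a single factor. (The genuinely semisimple situation in which every factor is $SU(2)$ produces a product of $\mathbb{C}P^1$'s, which is Hermitian symmetric and \emph{does} admit a Griffiths semi-positive Kähler metric; this is why the theorem should be read for simple $G$, in agreement with the concluding conjecture.) Fixing an arbitrary invariant almost-complex structure $J=\{\epsilon_\alpha\}$ and a quasi-Kähler metric $\omega$, the entire theorem then reduces to the purely combinatorial assertion that there exist $\alpha,\gamma\in R_M^{+}$ with $\alpha+\gamma\in R$ and $\alpha-\gamma\notin R$: by Lemma~\ref{lemma} this forces $R(X_\alpha,X_{-\alpha},X_\gamma,X_{-\gamma})<0$, destroying Griffiths semi-positivity. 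I emphasize that this treats integrable and non-integrable $J$ simultaneously, so no separate appeal to Mok's Theorem~\ref{Mok} or to Theorem~\ref{Height3} is needed.

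To manufacture such a pair I would descend to a rank-two closed subsystem. Choose a base $\Pi$ of $R$ and two \emph{adjacent} simple roots $\delta_1,\delta_2$; these exist because, $G$ being simple, its Dynkin diagram is connected. Put $V=\mathrm{span}_{\mathbb{R}}(\delta_1,\delta_2)$ and $R'=R\cap V$, an irreducible rank-two root system, hence of type $A_2$, $B_2$, or $G_2$. The point of the reduction is closedness: if $\alpha,\gamma\in R'$ satisfy $\alpha-\gamma\notin R'$, then since $\alpha-\gamma\in V$ and $R'=R\cap V$ we automatically get $\alpha-\gamma\notin R$. Hence it suffices to locate, inside $R'$ and for the restricted sign system $\{\epsilon_\alpha\}_{\alpha\in R'}$, two roots $\alpha,\gamma\in R'\cap R_M^{+}$ whose sum lies in $R'$ but whose difference does not.

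The remaining rank-two problem is a finite check. For each of $A_2$, $B_2$, $G_2$ one lists the \emph{good} unordered pairs $\{\alpha,\gamma\}$ with $\alpha+\gamma\in R'$ and $\alpha-\gamma\notin R'$, and records the sign pattern each one requires. Encoding the chosen structure by one sign $s_\mu\in\{+,-\}$ per antipodal pair $\{\mu,-\mu\}$, the statement ``some good pair lies in $R_M^{+}$'' becomes a short Boolean disjunction in the $s_\mu$. For $A_2$, with the three positive roots $\delta_1,\delta_2,\delta_1+\delta_2$ carrying signs $s_1,s_2,s_3$, the good pairs force $s_1=s_2$, or $s_1\neq s_3$, or $s_2\neq s_3$, whose negation $s_1\neq s_2,\ s_1=s_3,\ s_2=s_3$ is self-contradictory; an identical pigeonhole works for $B_2$ (four antipodal pairs) and for $G_2$. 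Thus for every $J$ a good pair survives in $R_M^{+}$, and Lemma~\ref{lemma} closes the argument.

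The main obstacle is exactly this last verification: confirming that \emph{no} sign system evades every good pair in the three rank-two systems. The $A_2$ and $B_2$ cases are the short parity computations indicated above, but $G_2$ has twelve roots and many more good pairs to track, so the case analysis must be organized with care — or, preferably, replaced by a uniform argument, for instance by fixing $\gamma\in R_M^{+}$ and exhibiting a second element of $R_M^{+}$ sitting at the bottom of a nontrivial $\gamma$-string (so that $\alpha-\gamma\notin R$ while $\alpha+\gamma\in R$). A secondary point worth stating cleanly is the semisimple reduction together with the status of products of $SU(2)/U(1)$, so that the hypothesis ``different from $SU(2)/U(1)$'' is understood to exclude precisely those Hermitian-symmetric rank-one factors.
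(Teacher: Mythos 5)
Your reduction to rank-two subsystems is sound and is a genuinely different organization from the paper's: closedness of $R'=R\cap V$ guarantees that a pair which is good in $R'$ (sum a root, difference not a root) stays good in $R$, so Lemma~\ref{lemma} applies, and your $A_2$ parity check is correct; the $B_2$ check you merely assert does work as well (with signs $a,b,c,d$ on $e_1-e_2,\ e_2,\ e_1,\ e_1+e_2$, the good pairs force $a=b$ or $a\neq c$ or $b\neq d$ or $c\neq d$, whose simultaneous negation gives $a=c=d=b$, contradicting $a\neq b$). The genuine gap is $G_2$, and it is not a deferrable refinement: $G_2$ has rank two, so for $G=G_2$ your subsystem $R\cap V$ is the entire root system, and the unfinished check \emph{is} the theorem for the maximal flag $G_2/T^2$ --- exactly the case the paper later needs when it invokes Theorem~\ref{Maximal} in its $G_2$ proposition. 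Your fallback (``bottom of a nontrivial $\gamma$-string'') is not formulated precisely enough to certify that case. The gap can be closed inside your framework: the long roots $\pm\{\beta,\ 3\alpha+\beta,\ 3\alpha+2\beta\}$ of $G_2$ form an $A_2$ subsystem every one of whose good pairs remains good in $G_2$, because the relevant differences are $\pm 3\alpha$, $\pm(3\alpha+3\beta)$, $\pm(6\alpha+3\beta)$, none of which are $G_2$ roots; your three-sign parity argument applied to these long roots then finishes $G_2$.

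For comparison, the paper's proof is uniform in the Lie type and splits instead on integrability. For integrable $J$ it notes that $R_M^+$ is an honest positive system on a maximal flag and takes two adjacent simple roots (sum a root, difference never a root). For non-integrable $J$ it picks $\alpha,\gamma\in R_M^+$ with $\alpha+\gamma\in R_M^-$, applies Lemma~\ref{lemma} repeatedly to force $\alpha-\gamma$, $2\alpha+\gamma$, $\alpha+2\gamma$ all to be roots, and then a three-case sign analysis produces a good pair whose difference is $3\alpha$, $3\gamma$ or $3(\alpha+\gamma)$ --- never a root. That is the precise form of the root-string argument you only gesture at, and it avoids any classification into $A_2$, $B_2$, $G_2$; your route, once the $G_2$ check is supplied, is a correct alternative with the merit of treating integrable and non-integrable $J$ simultaneously. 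Finally, your point about products is well taken: $(\mathbb{C}P^1)^k$ is a maximal flag manifold different from $SU(2)/U(1)$ that does carry a quasi-Kähler (indeed Kähler) Griffiths semi-positive metric, so the statement must be read for $G$ simple (or with such products excluded); the paper's own integrable case tacitly assumes this when it selects two simple roots whose sum is a root.
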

\begin{proof}

If $J$ is integrable, then $R_M^+$ is a set of positive roots in the Lie algebraic sense. Therefore, there are simple roots $\alpha, \beta$ such that $\alpha+\beta \in R_M^+$. And since they are simple roots, $\alpha-\beta \not\in R_M$. By Lemma \ref{lemma} it cannot have a Griffiths semi-positive metric.

Now assume $J$ is non-integrable and $\omega$ is Griffiths semi-positive. Similar to the proof of Theorem \ref{Height3}, as the almost-complex structure is non-integrable, we can take $\alpha,\gamma \in R_{M}^{+}$ such that $\alpha+\gamma \in R_{M}^{-}$. Again, we use Lemma \ref{lemma} to guarantee the existence of the roots $\alpha-\gamma$, $\alpha+2\gamma$ and $2\alpha+\gamma$. Since the flag manifold is maximal, all these roots belong to $R_M$.\\
Exchanging $\alpha$ and $\gamma$ if necessary, we may assume that $\alpha-\gamma \in R_{M}^{+}$. We will divide the proof into three cases
\begin{enumerate}
    \item [1)]($2\alpha+\gamma \in R_{M}^{-}$)\\
In this case, applying the Lemma \ref{lemma} to the pair $\beta=\alpha-\gamma$ and $\delta=-2\alpha - \gamma$, we have that $\beta+\delta=-\alpha-2\gamma\in R$ and $\beta-\delta=3\alpha\notin R$.

%There sum, $-\alpha - 2\gamma$, exists, but there difference, $3\alpha$, does not.

\item[2)] ($\alpha+2\gamma \in R_{M}^{+}$ )\\
Here, we apply Lemma \ref{lemma} to the pair $\beta=\alpha+2\gamma$ and $\delta=\alpha-\gamma$. We note that $\beta+\delta=2\alpha+\gamma\in R$ and $\beta-\delta=3\gamma\notin R$.

\item[3)] ($2\alpha+\gamma \in R_{M}^{+}$ and $\alpha+2\gamma \in R_{M}^{-}$ )\\ 
In this case, we use Lemma \ref{lemma} on the pair $\beta=2\alpha+\gamma$ and $\delta=-\alpha-2\gamma$. We observe that $\beta+\delta=\alpha-\gamma\in R$ and $\beta-\delta=3\alpha+3\gamma\notin R$. 
%which cannot be a root because $\alpha+\gamma$ is one.
\end{enumerate}
This contradicts the assumption of $(M,J,\omega)$ having Griffiths semi-positive curvature.
\end{proof}

The exceptional Lie algebras $G_2 , F_4 , E_6 , E_7$ and $E_8$ do have a root of mark $3$, but as we will see from the $G_2$ example, this condition does not guarantee the existence of such a metric.\\

\begin{proposition}
Let $M$ be a flag manifold associated with $G_2$ and $J$ a non-integrable almost-complex structure. There is no quasi-Kähler metric $\lambda$, such that $(M , J ,\lambda)$ is Griffiths semi-positive.
\end{proposition}
\begin{proof}
$G_2$ has two simple roots, which we denote by $\alpha$ and $\beta$. The set of roots, in terms of the simple roots, is composed by
\begin{equation*}
    R= \pm\{\alpha,\beta , \alpha+\beta , 2\alpha+\beta , 3\alpha+\beta , 3\alpha+2\beta\}.
\end{equation*}

There are $3$ flag manifolds associated with $G_2$:

i) by taking $\Pi_M = \{\emptyset\}$ we get the maximal flag $G_2/T^2$. 

ii) by taking $\Pi_M=\{\alpha\}$ we get $G_2/SU(2) \times U(1)$ where $SU(2) \subset G_2$ represents the long root. 

iii) by taking $\Pi_M=\{\beta\}$ we get the flag manifold $G_2/SU(2) \times U(1)$ where $SU(2) \subset G_2$ represents the short root.\\

Theorem \ref{Maximal} guarantees that are no quasi-Kähler metrics on the maximal flag $G_2/T^2$.\\

For the flag $G_2/SU(2) \times U(1)$ where $SU(2) \subset G_2$ represents the long root, the isotropy representation is
\begin{equation*}
    \mathfrak{m} = (\mathfrak{m}_\beta \oplus \mathfrak{m}_{\alpha+\beta} \oplus \mathfrak{m}_{2\alpha+\beta} \oplus \mathfrak{m}_{3\alpha+\beta}) \oplus \mathfrak{m}_{3\alpha+2\beta}.
\end{equation*}
This has $2$ almost-complex structures, $(+,+)$ and $(+,-)$. Here we are using the same notation as in the $\mathbb{C}P^{2n-1}$ example, where $(+,-)$ means that the sign $\epsilon_\gamma$ for all the roots that appear in the first summand, $\beta, \alpha+\beta , 2\alpha+\beta$ and $3\alpha+ \beta$, are $1$ and the sign $\epsilon_{3\alpha+ 2 \beta}=-1$.\\
The almost-complex structure $(+,+)$ is integrable.\\
For the non-integrable structure $(+,-)$, we see that it does not accept a Griffiths semi-positive metric by applying Lemma \ref{lemma} on the pair of positive roots $\gamma=3\alpha+ \beta$ and $\delta=-3\alpha - 2\beta$, since $\gamma+\delta=-\beta \in R_M$, but $\gamma-\delta=6\alpha +3\beta$ is not a root.\\

Lastly, we analyze the flag $G_2/SU(2) \times U(1)$ where $SU(2) \subset G_2$ represents the short root, the isotropy representation is
    \begin{equation*}
    \mathfrak{m} = (\mathfrak{m}_\alpha \oplus \mathfrak{m}_{\alpha+\beta}) \oplus \mathfrak{m}_{2\alpha+\beta} \oplus (\mathfrak{m}_{3\alpha+\beta} \oplus \mathfrak{m}_{3\alpha+2\beta}).
\end{equation*}
It has $4$ almost-complex structures $(+,+,+) , (+,+,-) , (+,-,+)$ and $(+,-,-)$. The first one is integrable. For the other three cases, we will exhibit a pair of roots, $\gamma$ and $\delta$, on $R_{M}^{+}$ such that $\gamma+\delta$ is a root, but $\gamma-\delta$ is not a root, and by Lemma \ref{lemma} it does not accept a quasi-Kähler metric of Griffiths semi-positive curvature.

\begin{enumerate}
    \item[1)] For the almost-complex structure $(+,+,-)$ take $\gamma=2\alpha+\beta$ and\\ $\delta=-3\alpha-\beta$.
    \item[2)] For the almost-complex structure $(+,-,+)$ take $\gamma=-2\alpha-\beta$ and\\ $\delta=3\alpha+\beta$.
      \item[3)] For the almost-complex structure $(+,-,-)$ take $\gamma=\alpha$ and $\delta=-3\alpha-\beta$.
\end{enumerate}
This concludes the proof.
\end{proof}

We believe that Lemma \ref{lemma} is too strong to allow quasi-Kähler flag manifolds with positive curvature except the flag manifolds with one isotropic summand, which are the simplest and the unique invariant metric has dual-Nakano semi-positive curvature.

\begin{conjecture}
If a flag manifold admits an almost-complex structure $J$ and a quasi-Kähler metric $\lambda$ of Griffiths semi-positive curvature then, it is isometrically biholomorphic to
\begin{equation*}
   (M_1 , g_1 ) \times ... \times (M_p , g_p ),
\end{equation*}
where $M_i$ are the Hermitian Symmetric spaces and $g_i$ its unique invariant metric.
\end{conjecture}

The example of $G_2$ was easy to compute because there are only $3$ flag manifolds associated with it, but $E_8$ has $40$ non-isomorphic flag manifolds and each of them has a large number of almost-complex structures, making the proof of the conjecture by direct computations, although possible, far too cumbersome. But, to endorse our beliefs on this conjecture, we will analyze a few flag manifolds associated to $F4$.

\begin{example}
    The roots space of $F_4$ has $4$ simple roots, which we denote by $\alpha, \beta, \gamma, \delta$. In terms of these simple roots the root space is
    \begin{equation}
    \begin{aligned}
        &R=\pm\{\alpha,\beta,\gamma,\delta,\\
        &\alpha+\beta,\beta+\gamma, \gamma +\delta ,\\
        &\alpha+\beta+ \gamma, \beta+\gamma +\delta , 2\beta+\gamma, \\
        &\alpha+\beta+2\gamma , \alpha+\beta+\gamma+\delta , \beta+2\gamma+\delta,\\
        &\alpha+2\beta+2\gamma, \alpha+\beta+2\gamma+\delta, \beta+2 \gamma+2 \delta,\\
        &\alpha+2\beta+2\gamma+\delta , \alpha+\beta+2\gamma+2\delta,\\
        &\alpha+2\beta +3\gamma +\delta, \alpha+2\beta +2\gamma+2\delta,\\
        &\alpha+2\beta +3\gamma+2\delta,\\
        &\alpha+2\beta +4\gamma+2\delta,\\
        &\alpha+3\beta +4\gamma+2\delta,\\
          &2\alpha+3\beta +4\gamma+2\delta\}.
         \end{aligned}
    \end{equation}

The maximal flag manifold $F_4 / T^4$ does not admit any quasi-Kähler metric of Griffiths semi-positive curvature for any almost-complex structure by Theorem \ref{Maximal}.

If we take the flag associated with the choice $\Pi_K = \{\beta,\gamma,\delta\}$, the resulting flag has $2$ summands
\begin{equation}
    \mathfrak{m}= \bigoplus_{\xi_\alpha =1}\mathfrak{m}_\xi \bigoplus _{\xi_\alpha =2}\mathfrak{m}_\xi
\end{equation}
where, by $\xi_\alpha = k$ we mean that when expressing the root $\xi$ in terms of the simple roots, $\alpha$ appears $k$ times.\\
Therefore, this flag has $2$ non-equivalent almost-complex structure, $(+,+)$ and $(+,-)$, the first is integrable and $F_4$ does not have any Kähler flag manifold of Griffiths semi-positive curvature. The second is non-integrable, and we only need to apply Lemma \ref{lemma} to the pair $\alpha$ and $-2\alpha - 3\beta - 4\gamma-2\delta$.\\

We will analyze one last flag manifold for $F_4$, for the choice $\pi_K = \{\alpha,\beta,\delta\}$. This flag has $4$ summands
\begin{equation}
    \mathfrak{m}= \bigoplus_{\xi_\alpha =1}\mathfrak{m}_\xi \bigoplus _{\xi_\alpha =2}\mathfrak{m}_\xi \bigoplus_{\xi_\alpha =3}\mathfrak{m}_\xi \bigoplus _{\xi_\alpha =4}\mathfrak{m}_\xi
\end{equation}
and $8$ non-equivalent almost-complex structures, one is integrable $(+,+,+,+)$. To prove that the other structures do not accept quasi-Kähler metrics of Griffiths semi-positive curvature, we exhibit for each a pair of roots on $R_{M}^{+}$ that the sum is root, but the difference is not a root, and use Lemma \ref{lemma} to conclude our claim.

\begin{enumerate}
    \item [1)] For $J=(+,+,+,-),(+,+,-,+),(+,+,-,-), $ take $\gamma$ and $\beta+\gamma+\delta$
    \item[2)] For $J= (+,-,+,+) , (+,-,+,-)$ take $\gamma$ and $\alpha+2\beta+2\gamma+\delta$
    \item[3)] For $J=(+,-,-,+)$ take $\alpha+2\beta+4\gamma+2\delta$ and $-\alpha-2\beta-3\gamma-2\delta$
    \item[4)] For $J=(+,-,-,-)$ take $\gamma$ and $-\alpha-2\beta-4\gamma-2\delta$.
\end{enumerate}

\end{example}

\section{Curvature positivity of the complex projective space}

We have shown that, the only invariant Kähler metric on the complex projective space with Griffiths semi-positive curvature is the Fubini-Study metric. In this section, we want to characterize all invariant almost-Hermitian metrics on the complex projective with Griffiths semi-positive curvature.\\

First, the only $SU(n)$ invariant metric on $\mathbb{C}P^{n-1} = SU(n)/S( U(n-1) \times U(1)$ is the Fubini-study metric, which we have seen to have semi-positive curvature. Therefore, we will focus on metrics invariant by $Sp(n)$ on $\mathbb{C}P^{2n-1} =Sp(n)/Sp(n-1)\times U(1)$.\\

As we have seen, the flag manifold $Sp(n)/Sp(n-1)\times U(1)$  has $2$ summands therefore, there are $2$ almost-complex structures, the integrable $J_1 =(+,+)$ and the non-integrable $J_2 =(+,-)$, and the invariant metrics can be parametrized by $\lambda_t =(1,t)$.

We have already seen that $ (\frac{Sp(n)}{Sp(n-1)\times U(1)}, J_1 ,\lambda_t )$  is dual-Nakano positive for $t=2$ because it is the Fubini-Study metric, and by Proposition \ref{lambda=1} we know that $t=1$ is dual-Nakano semi-positive. Our next results complete the characterization of invariant metrics on $\mathbb{C}P^{n}$ with Griffiths semi-positive curvature.

\begin{proposition}
     The projective space $\mathbb{C}P^{2n-1} =Sp(n)/Sp(n-1)\times U(1)$ endowed with the non-integrable almost-complex structure $(+,-)$ does not admit invariant metric with Griffiths semi-positive curvature.
\end{proposition}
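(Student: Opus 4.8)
The plan is to reduce everything to Lemma~\ref{lemma}, exploiting the fact that for the non-integrable structure $J_2 = (+,-)$ \emph{every} invariant metric is automatically quasi-Kähler. Indeed, as already observed in the $\mathbb{C}P^{2n-1}$ example, there are no roots $\alpha,\beta \in R_M^+$ with $\alpha+\beta \in R_M^+$, so the quasi-Kähler condition of Proposition~\ref{quasi--kahler} holds vacuously for every metric $\lambda_t = (1,t)$. Hence it suffices to exhibit a single pair $\alpha,\gamma \in R_M^+$ with $\alpha+\gamma \in R_M$ and $\alpha-\gamma \notin R$; Lemma~\ref{lemma} then rules out Griffiths semi-positivity for all $t$ simultaneously.

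First I would record the positive roots determined by $J_2$. Since $\epsilon_{\lambda_1-\lambda_j} = \epsilon_{\lambda_1+\lambda_j} = 1$ and $\epsilon_{2\lambda_1} = -1$, using $\epsilon_{-\alpha} = -\epsilon_\alpha$ one obtains
\[
R_M^+ = \{\lambda_1-\lambda_j,\ \lambda_1+\lambda_j \mid j = 2,\dots,n\} \cup \{-2\lambda_1\}.
\]
The natural first attempt is the pair $\alpha = \lambda_1-\lambda_j$, $\gamma = \lambda_1+\lambda_j$, whose sum $\alpha+\gamma = 2\lambda_1$ lies in $R_M$. However this pair does \emph{not} satisfy the hypothesis of the lemma: the difference $\alpha-\gamma = -2\lambda_j$ is still a root of $C_n$ (it merely fails to lie in $R_M$), so the cancelling term $m_{\alpha,-\gamma}^2$ in Equation~\ref{Curvatura1} survives and the lemma cannot be invoked. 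Recognising that this obvious pair is the wrong one is the only real obstacle in the argument.

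The resolution is to use the flipped root $-2\lambda_1$, which belongs to $R_M^+$ precisely because the structure is non-integrable ($\epsilon_{2\lambda_1} = -1$). Taking
\[
\alpha = \lambda_1-\lambda_j, \qquad \gamma = -2\lambda_1,
\]
both in $R_M^+$, gives $\alpha+\gamma = -(\lambda_1+\lambda_j) \in R_M$, while $\alpha-\gamma = 3\lambda_1-\lambda_j$ carries a coefficient $3$ on $\lambda_1$ and is therefore not a root of $C_n$, since every root has coordinate coefficients of absolute value at most $2$. Thus $\alpha-\gamma \notin R$, the hypotheses of Lemma~\ref{lemma} are met, and $(M,J_2,\lambda_t)$ fails to be Griffiths semi-positive for every $t>0$. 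The pair $\alpha = \lambda_1+\lambda_j$, $\gamma = -2\lambda_1$ works identically, yielding $\alpha+\gamma = -(\lambda_1-\lambda_j) \in R_M$ and $\alpha-\gamma = 3\lambda_1+\lambda_j \notin R$, which completes the proof.
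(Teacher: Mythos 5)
Your proof is correct, and every root computation checks out: with $J_2=(+,-)$ the set $R_M^+$ indeed contains $-2\lambda_1$, the pair $\alpha=\lambda_1-\lambda_j$, $\gamma=-2\lambda_1$ satisfies $\alpha+\gamma=-(\lambda_1+\lambda_j)\in R_M$ and $\alpha-\gamma=3\lambda_1-\lambda_j\notin R$ (no root of $C_n$ has a coefficient of absolute value $3$), and the quasi-Kähler condition is vacuous for every $(1,t)$, so Lemma~\ref{lemma} kills all invariant metrics at once. However, your route differs from the paper's in its second half. The paper shares your first step (every invariant metric on $(+,-)$ is quasi-Kähler) but then simply invokes Theorem~\ref{Height3}: Griffiths semi-positivity on a non-integrable quasi-Kähler flag would force $G$ to have a simple root of mark at least $3$, which $C_n$ (marks $2,\dots,2,1$) does not have. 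That argument is one line given the general machinery, and it disposes of all classical groups uniformly. Your argument instead bypasses Theorem~\ref{Height3} and applies Lemma~\ref{lemma} directly with an explicit pair of roots; this is more self-contained, pinpoints exactly which curvature term $R(X_\alpha,X_{-\alpha},X_\gamma,X_{-\gamma})$ is negative, and mirrors the style in which the paper itself treats the $G_2$ and $F_4$ cases. A genuinely nice point in your write-up is the diagnosis of why the ``obvious'' pair $\lambda_1-\lambda_j$, $\lambda_1+\lambda_j$ fails: its difference $-2\lambda_j$ is still a root of the ambient $C_n$ system even though it lies outside $R_M$, so the cancelling term $m_{\alpha,-\gamma}^2$ survives --- a subtlety (the hypothesis is $\alpha-\gamma\notin R$, not $\alpha-\gamma\notin R_M$) that the paper's shortcut never has to confront.
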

\begin{proof}
    Simply note that every invariant metric $(1,t)$ on this almost-complex structure is quasi-Kähler, then by Theorem \ref{Height3} it does not have  Griffiths semi-positive curvature.
\end{proof}

\begin{theorem}
The projective space $\mathbb{C}P^{2n-1} =Sp(n)/Sp(n-1)\times U(1)$ endowed with the integrable almost-complex structure $(+,+)$ and the invariant metric $(1,t)$ is Griffiths and dual-Nakano semi-positive if, and only if, $t\geq 1$. Moreover, $\mathbb{C}P^{2n-1}$ is Griffiths and dual-Nakano semi-positive if, and only if, $t> 1$.
\end{theorem}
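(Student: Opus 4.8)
The plan is to reduce the whole statement to the explicit $C_n$ root data recorded above together with the two identities of Proposition~\ref{Curvatura}. Setting $c=\frac{1}{\sqrt{2(n+1)}}$, a short computation with the root vectors $X_{\lambda_1\pm\lambda_j}$ and $X_{2\lambda_1}$ shows that the structure constants feeding the relevant curvature terms are all equal to $\pm c$ (for instance $m_{\lambda_1+\lambda_j,-2\lambda_1}=c$), and since $J_1=(+,+)$ has $\epsilon_\alpha=1$ for every $\alpha\in R_M^+$, each factor $\delta^1_{\epsilon_{\alpha+\gamma}}$ equals $1$ whenever $\alpha+\gamma\in R_M^+$. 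With $\lambda_\alpha=1$ on the $\lambda_1\pm\lambda_j$ summand and $\lambda_{2\lambda_1}=t$, I would first tabulate every nonzero component $R_{i\bar{j}k\bar{l}}$ in the basis $\{X_\alpha\}_{\alpha\in R_M^+}$, keeping track of its dependence on $t$.

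Since dual-Nakano positivity implies Griffiths positivity, I would arrange the argument so that the ``if'' direction is established in the stronger dual-Nakano form while the ``only if'' direction is established in the weaker Griffiths form. The ``only if'' direction rests on a single bisectional curvature: taking $\alpha=\lambda_1+\lambda_j$ and $\gamma=2\lambda_1$ in the first identity of Proposition~\ref{Curvatura}, all terms with $\alpha+\gamma\notin R$ drop out and one finds
\[
R(X_{\lambda_1+\lambda_j},X_{-\lambda_1-\lambda_j},X_{2\lambda_1},X_{-2\lambda_1})=c^{2}(t-1).
\]
Evaluating the Griffiths form on $u=X_{\lambda_1+\lambda_j}$ and $v=X_{2\lambda_1}$ returns exactly this number (up to a positive normalization), which is negative for $t<1$ and zero for $t=1$. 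Hence the space fails to be Griffiths semi-positive for $t<1$ and fails to be Griffiths positive for $t=1$; by the implication dual-Nakano $\Rightarrow$ Griffiths, the same failures hold for dual-Nakano. This pins down the thresholds $t\ge1$ and $t>1$.

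For the ``if'' direction I would show that the dual-Nakano Hermitian form $\sum R_{i\bar{j}k\bar{l}}\,u^{il}\overline{u^{kj}}$ is positive semi-definite for $t\ge1$ and positive definite for $t>1$. The idea is to use the residual isotropy $Sp(n-1)\times U(1)$, which preserves the three families $\{\lambda_1-\lambda_j\}$, $\{\lambda_1+\lambda_j\}$, $\{2\lambda_1\}$ of $R_M^+$: the invariance of $R$ makes the form block-diagonalize over the isotypic components of $T^{1,0}M\otimes T^{1,0}M$, reducing the question to a small number of explicit Hermitian blocks. I would then verify that each block is non-negative exactly for $t\ge1$ and strictly positive for $t>1$, anchoring the computation at the two metrics already understood: $t=1$ is dual-Nakano semi-positive by Proposition~\ref{lambda=1}, and $t=2$ is the Fubini--Study metric, which is dual-Nakano positive.

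The hard part will be this ``if'' direction, and inside it the off-diagonal components of $R$ that are not of either shape in Proposition~\ref{Curvatura}, such as $R(X_{\lambda_1-\lambda_k},X_{-\lambda_1+\lambda_j},X_{\lambda_1+\lambda_k},X_{-\lambda_1-\lambda_j})$, which couple four distinct roots and must be extracted from the full reductive formula $R(X,Y,Z)=\nabla_X\nabla_Y Z-\nabla_Y\nabla_X Z-\nabla_{[X,Y]_{\mathfrak m}}Z-[[X,Y]_{\mathfrak k},Z]$. These cross terms populate the off-diagonal entries of the blocks above, and the delicate point is to show that once everything is assembled the blocks remain semi-definite for all $t\ge1$, with no spurious upper bound on $t$ appearing. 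I expect the cleanest route is to clear the $1/t$ factors and reduce each block's positivity to checking that its trace and determinant are non-negative for $t\ge1$, that is, to a couple of one-variable polynomial inequalities in $t$.
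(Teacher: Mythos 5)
Your negative direction is exactly the paper's (it uses the components $R_{\lambda_1\pm\lambda_j,\overline{\lambda_1\pm\lambda_j},2\lambda_1,\overline{2\lambda_1}}=m^2_{\lambda_1\pm\lambda_j,-2\lambda_1}(t-1)$ from its curvature table for the same purpose), so the question is your positive direction, and there the central tool has a genuine gap. The dual-Nakano pairing $\sum R_{i\overline{j}k\overline{l}}\,u^{il}\overline{u^{kj}}$ transposes indices, so it is \emph{not} an isotropy-invariant Hermitian form on $T^{1,0}M\otimes T^{1,0}M$; it only becomes invariant after lowering an index with the metric, i.e.\ on $T^{1,0}M\otimes(T^{1,0}M)^*\cong\mathrm{End}(T^{1,0}M)$. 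This is not pedantry: under the central $U(1)$ of the isotropy, $X_{\lambda_1-\lambda_j}\otimes X_{\lambda_1-\lambda_j}$ has weight $2$ while $X_{2\lambda_1}\otimes X_{2\lambda_1}$ has weight $4$, so Schur's lemma on $T^{1,0}M\otimes T^{1,0}M$, as you propose to apply it, would force their pairing to vanish; but that pairing equals $R_{\lambda_1-\lambda_j,\overline{2\lambda_1},2\lambda_1,\overline{\lambda_1-\lambda_j}}=\tfrac{1}{2(n+1)}\neq0$ --- these are precisely the entries in the last row and column of the paper's matrix $[R^t]$, i.e.\ exactly the couplings that make positivity nontrivial. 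Your decomposition would erase the hard part of the form rather than diagonalize it. Two smaller points: trace/determinant nonnegativity certifies positive semi-definiteness only for $2\times2$ blocks, which you have not shown yours to be; and the block verification is only promised, never performed.

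The plan is salvageable, and once repaired it genuinely diverges from the paper. Run it on $\mathrm{End}(T^{1,0}M)$: with $T^{1,0}M=V\oplus L$, $V=\mathrm{span}\{X_{\lambda_1\pm\lambda_j}\}$, $L=\mathbb{C}X_{2\lambda_1}$, the $Sp(n-1)\times U(1)$-isotypic decomposition of $\mathrm{End}(V)\oplus\mathrm{Hom}(V,L)\oplus\mathrm{Hom}(L,V)\oplus\mathrm{End}(L)$ has exactly one multiplicity-two component (the trivial one, spanned by $\mathrm{Id}_V$ and $\mathrm{Id}_L$) and four multiplicity-one components, so positivity reduces to four scalar inequalities plus one honest $2\times2$ block where trace/determinant is legitimate. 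The paper instead restricts the form to the span of the $X_\alpha\otimes X_\alpha$, writes the full $(2n-1)\times(2n-1)$ matrix $[R^t]$, and gets positivity for $t>1$ by continuity of eigenvalues in $t$ anchored at the Fubini--Study point $t=2$, plus invertibility of $[R^t]$ for $t>1$. Finally, your instinct that the four-root components are ``the hard part'' is correct, and it in fact exposes a gap in the paper itself: components such as $R_{\lambda_1-\lambda_j,\overline{\lambda_1-\lambda_k},\lambda_1+\lambda_j,\overline{\lambda_1+\lambda_k}}$ are absent from the paper's lists \ref{curvatura 1} and \ref{curvatura2}, yet computing them with the paper's own connection formula gives $\tfrac{1}{4(n+1)}\left(\tfrac{2}{t}-1\right)$, which vanishes only at the Fubini--Study value $t=2$. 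They couple the mixed coefficients $u^{\lambda_1-\lambda_j,\lambda_1+\lambda_k}$ and $u^{\lambda_1-\lambda_k,\lambda_1+\lambda_j}$ into blocks
\begin{equation*}
\frac{1}{4(n+1)}\left(\begin{array}{cc} 1 & \frac{2}{t}-1\\ \frac{2}{t}-1 & 1\end{array}\right),
\end{equation*}
which are positive semi-definite precisely for $t\geq1$; so the theorem survives, but a complete proof --- yours or the paper's --- must account for these terms, and your corrected $\mathrm{End}$-space decomposition would capture them automatically, whereas the paper's reduction to $[R^t]$ silently discards them.
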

\begin{proof}
Using the notation $R_{\alpha,\overline{\beta}, \gamma,\overline{\delta}} = \lambda (R(X_\alpha , X_{-\beta})X_\gamma , X_{-\delta})$, we compute all the non-zero curvature terms on $Sp(n)/Sp(n-1)\times U(1)$.

\begin{equation}\label{curvatura 1}
    \begin{aligned}
& R_{2\lambda_1 ,\overline{2\lambda_1},\lambda_1 - \lambda_j ,\overline{\lambda_1 - \lambda_j}} =   m_{2\lambda_1 , -\lambda_1+\lambda_j}^2        \\
& R_{2\lambda_1 ,\overline{2\lambda_1},\lambda_1 + \lambda_j ,\overline{\lambda_1 + \lambda_j}} =    m_{2\lambda_1 , -\lambda_1-\lambda_j}^2     \\ 
&R_{\lambda_1 -\lambda_j , \overline{\lambda_1 - \lambda_j}, 2\lambda_1 ,\overline{2\lambda_1}}=  m_{\lambda_1 -\lambda_j , -2\lambda_1}^2 (t-1)\\
&R_{\lambda_1 -\lambda_j , \overline{\lambda_1 - \lambda_j}, \lambda_1 - \lambda_k ,\overline{\lambda_1 - \lambda_k}}= m_{\lambda_1 - \lambda_j , -\lambda_1 +\lambda_k}^2 \\
&R_{\lambda_1 -\lambda_j , \overline{\lambda_1 - \lambda_j},\lambda_1 +\lambda_j ,\overline{\lambda_1 +\lambda_j}} = m_{\lambda_1 - \lambda_j , -\lambda_1 - \lambda_j}^2 + m_{\lambda_1 - \lambda_j , \lambda_1 +\lambda_j}^2 (\frac{1}{t}-1)\\
&R_{\lambda_1 -\lambda_j , \overline{\lambda_1 - \lambda_j},\lambda_1 +\lambda_k ,\overline{\lambda_1+\lambda_k}} = m_{\lambda_1 - \lambda_j , -\lambda_1 - \lambda_k}^2\\
&R_{\lambda_1 +\lambda_j , \overline{\lambda_1 + \lambda_j}, 2\lambda_1 ,\overline{2\lambda_1}}= m_{\lambda_1 +\lambda_j , -2\lambda_1}^2 (t-1) \\
&R_{\lambda_1 +\lambda_j , \overline{\lambda_1 + \lambda_j}, \lambda_1 + \lambda_k ,\overline{\lambda_1 + \lambda_k}}=m_{\lambda_1 +\lambda_j , -\lambda_1 - \lambda_k}^2 \\
&R_{\lambda_1 +\lambda_j , \overline{\lambda_1 + \lambda_j},\lambda_1 -\lambda_j ,\overline{\lambda_1 -\lambda_j}} =m_{\lambda_1 + \lambda_j , -\lambda_1 + \lambda_j}^2 + m_{\lambda_1 + \lambda_j , \lambda_1 -\lambda_j}^2 (\frac{1}{t}-1)\\
&R_{\lambda_1 +\lambda_j , \overline{\lambda_1 + \lambda_j},\lambda_1 -\lambda_k ,\overline{\lambda_1-\lambda_k}} = m_{\lambda_1 +\lambda_j , -\lambda_1 +\lambda_k}^2 \\
&R_{2\lambda_1 , \overline{2\lambda_1},2\lambda_1 , \overline{2\lambda_1}} =t B(H_{2\lambda_1} , H_{2\lambda_1} ) = \frac{t}{n+1}\\
&R_{\lambda_1 - \lambda_j , \overline{\lambda_1 - \lambda_j},\lambda_1 - \lambda_j , \overline{\lambda_1 - \lambda_j}} = B(H_{\lambda_1 - \lambda_j} , H_{\lambda_1 - \lambda_j} ) = \frac{1}{2(n+1)}\\
&R_{\lambda_1  +\lambda_j , \overline{\lambda_1 + \lambda_j},\lambda_1 + \lambda_j , \overline{\lambda_1 + \lambda_j}} = B(H_{\lambda_1 + \lambda_j} , H_{\lambda_1 + \lambda_j} ) = \frac{1}{2(n+1)}
    \end{aligned}
\end{equation}
From this, we see that if $t<1$ some curvature terms are negative, and therefore not Griffiths or dual-Nakano semi-positive. Also for $t=1$ it is not Griffiths or dual-Nakano positive. Now, we compute the rest of the curvature tensors that are not null.
\begin{equation}\label{curvatura2}
    \begin{aligned}
        & R_{\lambda_1 - \lambda_j ,\overline{2\lambda_1},2\lambda_1, \overline{\lambda_1 - \lambda_j}} =   m_{2\lambda_1 , -\lambda_1 +\lambda_j}^2   =\frac{1}{2(n+1)} \\
        & R_{\lambda_1 + \lambda_j ,\overline{2\lambda_1},2\lambda_1, \overline{\lambda_1 + \lambda_j}} =   m_{2\lambda_1 , -\lambda_1-\lambda_j}^2   =\frac{1}{2(n+1)} \\
        & R_{2\lambda_1, \overline{\lambda_1 - \lambda_j},\lambda_1 -\lambda_j \overline{2\lambda_1}}=   m_{\lambda_1 - \lambda_j , -2\lambda_1}^2   =\frac{1}{2(n+1)} \\
&R_{\lambda_1 -\lambda_k, \overline{\lambda_1 - \lambda_j},\lambda_1 -\lambda_j \overline{\lambda_1 - \lambda_k}}= m_{\lambda_1 -\lambda_j , -\lambda_1 +\lambda_k}^2  =\frac{1}{4(n+1)} \\
&R_{\lambda_1 +\lambda_j, \overline{\lambda_1 - \lambda_j},\lambda_1 -\lambda_j \overline{\lambda_1 +\lambda_j}}= -m_{\lambda_1 - \lambda_j , \lambda_1 +\lambda_j}^2\frac{1}{t} + m_{\lambda_1 - \lambda_j , -\lambda_1 - \lambda_j}^2   =\frac{1}{2(n+1)}(1-\frac{1}{t})  \\
&R_{\lambda_1 +\lambda_k, \overline{\lambda_1 - \lambda_j},\lambda_1 -\lambda_j \overline{\lambda_1 +\lambda_k}}= m_{\lambda_1 - \lambda_j , -\lambda_1 -\lambda_k}^2  =\frac{1}{4(n+1)} \\
& R_{2\lambda_1, \overline{\lambda_1 + \lambda_j},\lambda_1 +\lambda_j \overline{2\lambda_1}}=   m_{\lambda_1 +\lambda_j , 2\lambda_1}^2   =\frac{1}{2(n+1)}  \\
&R_{\lambda_1 +\lambda_k, \overline{\lambda_1 + \lambda_j},\lambda_1 +\lambda_j \overline{\lambda_1 + \lambda_k}}= m_{\lambda_1 +\lambda_j ,-\lambda_1 - \lambda_k }^2     =\frac{1}{4(n+1)} \\
&R_{\lambda_1 -\lambda_j, \overline{\lambda_1 + \lambda_j},\lambda_1 +\lambda_j \overline{\lambda_1 -\lambda_j}}=  -\frac{1}{t}m_{\lambda_1 +\lambda_j , \lambda_1 - \lambda_j}^2 + m_{\lambda_1 +\lambda_j , -\lambda_1 +\lambda_j}^2   =\frac{1}{2(n+1)}(1-\frac{1}{t}) \\
&R_{\lambda_1 -\lambda_k, \overline{\lambda_1 + \lambda_j},\lambda_1 +\lambda_j \overline{\lambda_1 -\lambda_k}}=  m_{\lambda_1 +\lambda_j , -\lambda_1 - \lambda_k}^2    =\frac{1}{4(n+1)}  .\\
    \end{aligned}
\end{equation}

 We want to prove that the metric $(1,t)$ is dual-Nakano positive if, and only if, $ t >1$, we already know it is positive for $t=2$ because it is the Fubini-Study metric.

Given $u = \sum u^{\alpha,\beta} X_\alpha \otimes X_\beta $ we want to analyze if
\begin{equation}
    \sum_{\alpha,\beta,\gamma,\delta} u^{\alpha,\delta} \overline{u^{\beta,\gamma}} R_{\alpha,\overline{\beta},\gamma,\overline{\delta}} \geq 0.
\end{equation}
Excluding the terms that are zero, we are left with
\begin{equation}
      \sum_{\alpha,\gamma} u^{\alpha,\gamma} \overline{u^{\alpha,\gamma}} R_{\alpha,\overline{\alpha},\gamma, \overline{\gamma}}  + u^{\alpha,\alpha} \overline{u^{\gamma,\gamma}} R_{\alpha,\overline{\gamma},\gamma, \overline{\alpha}}.
\end{equation}
Note that the mixed terms $u^{\alpha,\gamma} X_\alpha \otimes X_\gamma$ always contribute positively to the sum if $t\geq 1$, because $R_{\alpha,\overline{\alpha},\gamma}^\gamma$ is always positive. Thus, we may assume that\\ $u=\sum u^{\alpha,\alpha}X_\alpha \otimes X_\alpha$. The sum becomes
\begin{equation}
       \sum_{\alpha,\gamma}    u^{\alpha,\alpha} \overline{u^{\gamma,\gamma}} R_{\alpha,\overline{\gamma},\gamma,\overline{\alpha}} .
\end{equation}

Fix an ordering on $R_M$,\\ $(\alpha_1 ,..., \alpha_{2n+1}) = (\lambda_1 -\lambda_2 , \lambda_1 +\lambda_2 ,..., \lambda_1 -\lambda_n , \lambda_1 +\lambda_n , 2\lambda_1)$ and define a matrix $R^t = [R^{t}_{\alpha_i \overline{\alpha_j}\alpha_j , \overline{\alpha_i}}]$. Then, identifying the $u = \sum u^{\alpha,\alpha} X_\alpha \otimes X_\alpha \in T^{1,0}M \otimes T^{1,0}M$ with $u= (u^{\alpha_1 \alpha_1} ,..., u^{\alpha_{2n+1}\alpha_{2n+1}}) \in \mathbb{C}^{2n+1}$ we have:

\begin{equation}
       \sum_{\alpha,\gamma}    u^{\alpha,\alpha} \overline{u^{\gamma,\gamma}} R_{\alpha,\overline{\gamma},\gamma,\overline{\alpha}}  = [u]^{T} [R^t ] \overline{[u]}.
\end{equation}
The positivity of this for every $u \in \mathbb{C}^{2n+1}$ is equivalent to the eigenvalues of $[R^t]$ being positive. Computing the structure constant on Equation \ref{curvatura 1} and \ref{curvatura2}, the matrix $[R^t]$ has the below form:

\begin{equation}
   \frac{1}{2(n+1)} \left(\begin{array}{cccccccc}
       1   &1-\frac{1}{t} & \frac{1}{2} &\frac{1}{2} &\hdots &\frac{1}{2} &\frac{1}{2} &1\\
       1-\frac{1}{t} & 1 & \frac{1}{2}& \frac{1}{2}  &\hdots &\frac{1}{2} &\frac{1}{2} &1\\
     \frac{1}{2} &\frac{1}{2}  &1   &1-\frac{1}{t}  &\hdots &\frac{1}{2} &\frac{1}{2} &1\\
     \frac{1}{2}& \frac{1}{2}     &1-\frac{1}{t} & 1   &\hdots &\frac{1}{2} &\frac{1}{2} &1\\
      \vdots & \vdots  & \vdots & \vdots &\ddots &  &  &\vdots\\
     \frac{1}{2} &\frac{1}{2} &\frac{1}{2} &\frac{1}{2} &\hdots  &1   &1-\frac{1}{t} &1\\
     \frac{1}{2} &\frac{1}{2} &\frac{1}{2} &\frac{1}{2} &\hdots     &1-\frac{1}{t} &1 &1\\
      1 &1 &1 &1 &\hdots &1&1  &2t
    \end{array}\right).
\end{equation}

Since the function $t \mapsto [R^t ]$ is analytic and $[R^t]$ is symmetric, we can parametrize the eigenvalues $\lambda_i (t)$ continuously \cite[Theorem 3.9.1]{tyrtyshnikov1997brief}.

As for $t=2$ all eigenvalues are positive, if we prove that $0$ is not an eigenvalue of $[R^t ]$ for $t>1$, then, due to the continuity of the eigenvalues, all eigenvalues of $[R^t ]$ are positive for $t>1$. Also note that $0$ is not an eigenvalue of a matrix if, and only if, the matrix is invertible. Thus, it suffices to prove that $[R^t ]$ is invertible for $t>1$, we do so by proving that its columns form a basis for $\mathbb{C}^{2n-1}$.\\

Let $v_1 ,..., v_{2n-1}$ be the columns of $[R^t ]$, given $a_1 ,..., a_{2n-1} \in \mathbb{C}$ such that
\begin{equation*}
    a_1 v_1 +...+a_{2n-1}v_{2n-1} =0 ,
\end{equation*}
we will show that $a_1,..., a_{2n-1}=0$. Analyzing the row $2i$ and $2i-1$ for $i=1,...,n-1$ we have
\begin{equation*}
    \begin{aligned}
       & \frac{1}{2}(a_1 +... a_{2n-2} ) +a_{2n-1} +\frac{1}{2}(a_{2i}+a_{2i-1} ) -\frac{a_{2i}}{t}=0\\
        & \frac{1}{2}(a_1 +... a_{2n-2} ) +a_{2n-1} +\frac{1}{2}(a_{2i}+a_{2i-1} ) -\frac{a_{2i-1}}{t}=0 .
    \end{aligned}
\end{equation*}
Subtracting one with another, we get $a_{2i-1}= a_{2i}$  and therefore, for $i=1 ,..., 2n-2$ we have
 \begin{equation}\label{LI}
     \frac{1}{2}(a_1 +...+ a_{2n-2}) +a_{2n-1} +a_i -\frac{a_i }{t}=0 .
 \end{equation}
Subtracting this equation for two different indexes, $i \neq j$, we get
\begin{equation*}
    (a_i - a_j )(1 - \frac{1}{t}) =0,
\end{equation*}
since $t \neq 1$, we have that $a_i = a_j$ for every $i,j = 1,..., 2n-2$.\\ 

Finally, we analyze the last row, which gives us
\begin{equation*}
    a_1 +...+a_{2n-2} +2t a_{2n-1}=2t a_{2i-1} =0 .
\end{equation*}
Using the fact that $a_1 =...=a_{2n-2}$ and combining this Equation with Equation \ref{LI} we get that $a_1 =...=a_{2n-1}=0$, which concludes the proof.

\end{proof}

{\large\textbf{Acknowledgements.}} The authors would like to thank professor Lino Grama for his helpful suggestions and encouragement. G. Galindo is supported by the Coordination for the Improvement of Higher Education Personnel CAPES.

\bibliographystyle{alpha}
\bibliography{ref.bib}

\end{document}